 \newcommand{\eps}{{\varepsilon}}
\newcommand{\kom}[1]{}
\newcommand{\norm}[1]{\left|\left|#1\right|\right|}
\numberwithin{equation}{section}
\numberwithin{figure}{section}
\theoremstyle{plain}
\newtheorem{thm}{\protect\theoremname}[section]
\newtheorem{corollary}[thm]{Corollary}
\theoremstyle{definition}
\newtheorem{defn}[thm]{\protect\definitionname}
\theoremstyle{plain}
\newtheorem{lem}[thm]{\protect\lemmaname}
\providecommand{\definitionname}{Definition}
\providecommand{\lemmaname}{Lemma}
\providecommand{\theoremname}{Theorem}
\newcommand{\abs}[1]{\left|#1\right|}
\global\long\def\d{\,d}%
\global\long\def\tr{\mathrm{tr}}%
\global\long\def\supp{\operatorname{spt}}%
\global\long\def\div{\operatorname{div}}%
\DeclareMathOperator*{\osc}{osc}
\global\long\def\l{<}%
\begin{document}

\author[Lindqvist]{Peter Lindqvist}
\address{Department of Mathematical Sciences, Norwegian University of Science and Technology, NO-7491 Trondheim, Norway}
\email{peter.lindqvist@ntnu.no}

\author[Parviainen]{Mikko Parviainen}
\address{Department of Mathematics and Statistics, University of
Jyv\"askyl\"a, PO~Box~35, 40014 Jyv\"askyl\"a, Finland}
\email{mikko.j.parviainen@jyu.fi}

\author[Siltakoski]{Jarkko Siltakoski}
\address{Department of Mathematics and Statistics, University of Helsinki, P.O. Box 68, FI-00014 Helsinki, Finland}
\email{jarkko.siltakoski@helsinki.fi}

\title[Equivalence and Lipschitz]{Lipschitz continuity and equivalence of positive viscosity and weak solutions to Trudinger's equation}

\date{\today}
\keywords{Equivalence of solutions, Ishii-Lions method,  Lipschitz continuity,  parabolic p-Laplace, Trudinger's equation, Viscosity solutions, Weak solutions} 
\subjclass[2020]{35K65,35K55, 35B65, 35D30,35D4}
\thanks{MP is supported by the Research Council of Finland, project 360185. JS is supported by the Emil Aaltonen foundation. The authors would like to thank Juha Kinnunen and Erik Lindgren for useful discussions.}

\begin{abstract}
We show that stricty positive viscosity supersolutions to Trudinger's equation are local weak supersolutions.
As an application, we show using the Ishii-Lions method that not only viscosity but also weak solutions are Lipschitz continuous in the space variable. In the time variable, we obtain H{\"o}lder continuity with the exponent $1/2$.
\end{abstract}

\maketitle

\tableofcontents

\section{Introduction}

We study Trudinger's equation 
\begin{equation}
(p-1)u^{p-2}\partial_{t}u-\Delta_{p}u=0\label{eq:trudinger}
\end{equation}
in an open and bounded domain $\Xi\subset\mathbb{R}^{N+1}$, with $p>1$ and  $\Delta_{p}u=\operatorname{div}(\abs{D u}^{p-2} D u)$.
We restrict ourselves to the (strictly) positive $u$ and show that viscosity supersolutions are local weak supersolutions
(Theorem \ref{thm:visc is weak}). The reverse implication also holds so that positive weak and viscosity solutions actually coincide (Theorem~\ref{thm:equivalence}). In the past, such questions have been studied for example by Ishii \cite{Ishii95} for linear elliptic equations and by Juutinen, Lindqvist, and Manfredi for the elliptic and parabolic $p$-Laplace equation in \cite{juutinenlm01}.
As an application, we show that not only viscosity solutions but also weak solutions are Lipschitz continuous  in the space variable (Theorem~\ref{thm:lipschitz theorem} and Corollary~\ref{cor:lip}), which so far has been an open problem. We also obtain H{\"o}lder continuity with the exponent $1/2$ with respect to the time variable. Thus
\begin{align*}
    \abs{u(x,t)-u(x_0,t_0)}\le L\left( \abs{x-x_0}+\abs{t-t_0}^{1/2}\right).
\end{align*}

To prove that viscosity supersolutions are weak supersolutions, we
use the technique introduced by Julin and Juutinen \cite{juutinenj12} based on approximation by inf-convolution.  However, many problems appear since there is $u^{p-2}$ in Trudinger's equation. Problems caused by this factor in the theory are serious and, for example, full uniqueness is still an open problem. The more detailed idea of the proof is as follows.

As a first step, it holds that inf-convolution is still a
viscosity supersolution and by Alexandroff's theorem twice differentiable a.e. The details of this step are established in Lemma~\ref{lem:inf conv is super}, which is the key lemma of the paper.  For Trudinger's equation, this step is problematic since the term $u^{p-2}\partial_{t}u$ has $u$-dependence, and translation in the inf-convolution causes an extra error which will be difficult to control. Controlling the error requires careful balancing out with the gradient, and this idea is described right before the lemma. 

As a second step, we observe that the previous step implies that the $p$-Laplacian of the inf-convolution exists and  that supersolution property holds pointwise a.e.\ without difficulties at least when $p\ge 2$. 
 Then the idea is to multiply the $p$-Laplacian of the inf-convolution by a test function and integrate by parts. However, to justify this step a further regularization is required. This is taken care of in Lemma~\ref{lem:inf conv is weak}, and again the error term will have to be taken into account.

Finally, as a third step, one needs to be able to pass to a limit under the integral.  Details of this step are in Section~\ref{sec:energy-estimates}. To be able to pass to a limit under the integral sign, we need a uniform bound for the gradients of uniformly bounded weak supersolutions with the error. This is established in Lemma~\ref{lem:caccioppoli}. Also observe that the weak convergence is not sufficient here. In order to pass to a limit under the integral sign, we need stronger convergence for the gradients obtained in Lemma~\ref{lem:strong conv}.

 Using the above procedure, we finally obtain in Theorem~\ref{thm:visc is weak} that a viscosity supersolution is a weak supersolution under our assumptions. We also establish the results in the case $1<p<2$ using a special inf-convolution. Finally, the standard argument using Theorem 3 in \cite{lindgrenl22} completes the picture and gives that weak solutions are viscosity solutions, and thus the equivalence result in Theorem~\ref{thm:equivalence}.

As an application of the preceding equivalence result, we establish a qualitative Lipschitz result for positive viscosity and weak solutions  in Section \ref{eq:Ishii-Lions lemma cnd}, see Theorem~\ref{thm:lipschitz theorem}. In order to obtain this, we use the  Ishii-Lions technique  \cite{ishiiLions90} (see also \cite{imbertJinSilvestre19}) from the theory of viscosity solutions twice. Since we showed above that viscosity solutions are weak solutions, this also implies Lipschitz continuity of positive weak solutions.

The equation (\ref{eq:trudinger}) was originally suggested by Trudinger in \cite{trudinger68} as an example of the equation that could have simpler Harnack's inequality than the standard parabolic $p$-Laplace equation. 
Despite the homogeneity of the equation, the regularity theory turned out to be quite problematic.  In \cite{kinnunenk07}, Kinnunen and Kuusi proved Harnack's inequality for Trudinger's equation using Moser's iteration, whereas Gianazza and Vespri established Harnack's inequality in \cite{gianazzav06} using De Giorgi's method. In \cite{kuusisu12} Kuusi, Siljander and Urbano obtained  H{\"o}lder continuity  in the case $2<p$ for nonnegative solutions, and in \cite{kuusilsu12} the case $1<p<2$ was treated by Kuusi, Laleoglu, Siljander and Urbano.  In \cite{diehlu20} Diehl and Urbano established a sharp H{\"o}lder regularity for Trudinger's inhomogeneous equation.  Recently, the asymptotic behaviour and comparison principle for positive solutions  were studied by Hynd and Lindgren \cite{hyndl21}  as well as Lindgren and Lindqvist \cite{lindgrenl22} respectively.

Also the doubly nonlinear equation,
\begin{align*}
\partial_{t}(|u|^{q-1}u)-\Delta_{p}u=0
\end{align*}
has received a lot of attention. In \cite{henriques20}, Henriques established H\"older regularity for nonnegative weak solutions. 
B{\"o}gelein, Duzaar, Liao and Sch{\"a}tzler  studied the H\"older regularity results for the doubly nonlinear equation in a series of papers \cite{bogeleindl21, bogeleindls22, liaos22}, where the first paper covers Trudinger's equation also  with sign-changing solutions. Misawa \cite{misawa23} considered the expansion of positivity for doubly nonlinear parabolic equations, including Trudinger's equation. In a recent work \cite{bogeleindgls}, B\"ogelein, Duzaar, Gianazza, Liao and Scheven consider continuity of the gradient of solutions to doubly non-linear equations in a certain range of parameters. However, this does not cover Trudinger's equation:  \emph{To the best of our knowledge, there is no preceding correct Lipschitz proof in the literature}. From our theorem, it follows that the gradient of a solution exists a.e.\ by Rademacher's theorem in the usual sense in addition to the weak sense, and also that the gradient is \emph{locally bounded}.    

As for an alternative point of view, Bhattacharya and Marazzi studied  viscosity solutions to Trudinger's equation and their asymptotic behaviour in \cite{bhattacharyam15} and \cite{bhattacharyam16}. They proved existence and  uniqueness for positive solutions. However, their proof is based on a logarithmic change of variables and is quite different from the techniques of this paper. As mentioned, we follow the approach in \cite{juutinenj12}.  The technique was extended to the parabolic case in \cite[Appendix A]{parviainenv20} and \cite{siltakoski21}. This approach does not rely on the uniqueness theory of viscosity solution, which could also be used to study the equivalence of weak and viscosity solutions as for example in \cite{juutinenlm01}.   
The aforementioned article \cite{lindgrenl22} also briefly discusses viscosity solutions and establishes that nonnegative weak solutions are viscosity solutions. Moreover, in \cite{hyndl21} Hynd and Lindgren study the asymptotic behaviour and show that a related quantity converges to an extremal of Poincar\'e's inequality.
To conclude, we note that Hynd and Lindgren investigated the Lipschitz regularity of viscosity solutions for a different class of doubly nonlinear parabolic equations in \cite{hyndl19}.

\subsection{Viscosity solutions}

We consider positive, continuous viscosity sub- and
supersolutions.
\begin{defn}[Viscosity solutions]
Let $u:\Xi\rightarrow(0,\infty)$ be continuous. We say that $u$
is a \textit{viscosity supersolution} to (\ref{eq:trudinger}) in
$\Xi$ if whenever $\phi\in C^{2}(\Xi)$ touches $u$ from below
at $(x,t)\in\Xi$, $D\phi(y,s)\neq 0, y\neq x$, we have
\[
\limsup_{(y,s)\rightarrow(x,t), y\not=x}\left(\partial_{t}\big(\abs{\phi(y,s)}^{p-2}\phi(y,s)\big)-\Delta_{p}\phi(y,s)\right)\geq 0.
\]
Analogously, we say that $u$ is a \textit{viscosity subsolution}
to (\ref{eq:trudinger}) in $\Xi$ if whenever $\varphi\in C^{2}(\Xi)$
touches $u$ from above at $(x,t)\in\Xi$, $D\phi(y,s)\neq 0, y\neq x$, we have
\[
\liminf_{(y,s)\rightarrow(x,t), y\not = x}\left(\partial_{t}\big(\abs{\phi(y,s)}^{p-2}\phi(y,s)\big)-\Delta_{p}\phi(y,s)\right)\leq0.
\]
Finally, $u$ is a \textit{viscosity solution} if it is both viscosity
sub- and supersolution.
\end{defn}
Since we consider positive solutions, the above definition takes the form 
\[
\limsup_{(y,s)\rightarrow(x,t), y\not=x}\left((p-1)\partial_{t}\phi(y,s)-u^{2-p}(y,s)\Delta_{p}\phi(y,s)\right)\geq 0.
\]
for supersolutions, and analogously for subsolutions. 
In the case $p\ge 2$, the definition of the viscosity solutions to Trudinger's equation in terms of parabolic jets is in \cite{bhattacharyam15}; here we allow the whole range $1<p<\infty$.
When using Theorem on Sums, we also need the notion of parabolic jets. For example we denote the parabolic subjet as $(\theta,\eta,X)\in\mathcal{P}^{2,-}u(x,t)$. For more details, see  \cite{crandall97, userguide, lindqvist25, diehlFrizOberhauser14}.

Weak solutions to (\ref{eq:trudinger}) are defined below, where $\d z=\d x \d t$.
\begin{defn}[Weak solutions]
 Let $u\in L_{\text{loc}}^{p}(\Xi)$ with $Du\in L_{\text{loc}}^{p}(\Xi)$,
where $Du$ is the distributional gradient of $u$ in space variable.
We say that $u$ is a \textit{weak supersolution} to (\ref{eq:trudinger})
if for all non-negative $\varphi\in C_{0}^{\infty}(\Xi)$ we have
\[
\int_{\Xi}\Big(-{u}^{p-2}u\,\partial_{t}\varphi+\left|Du\right|^{p-2}Du\cdot D\varphi\big)\d z\geq 0.
\]
For \textit{weak subsolutions}, the above inequality is reversed and
$u$ is a \textit{weak solution} if it is both weak sub- and supersolution.
\end{defn}

\section{Positive, uniformly continuous viscosity supersolutions are weak
supersolutions}

\subsection{Regularization via inf-convolution}
\label{sec:inf-conv}
We use a special form of inf-convolution where the penalty
term in time has different weight than the penalty term in space.
Given $u:\Xi\rightarrow\mathbb{R}$, we denote 
\begin{equation}
u_{\varepsilon}(x,t):=\inf_{(y,s)\in\Xi}\left\{ u(y,s)+\frac{\left|y-x\right|^{q}}{q\varepsilon^{q-1}}+\frac{\left|s-t\right|^{2}}{2\delta_{\varepsilon}}\right\} ,\label{eq:inf conv special}
\end{equation}
where $\delta_{\varepsilon}$ is a suitable function of $\varepsilon$,
$q>p/(p-1)$ if $1<p<2$, and $q=2$ if $p\geq2$. By choosing small
$\delta_{\varepsilon}$ compared to $\varepsilon$, we may ensure
that the time penalty term is small compared to $\varepsilon$. This
allows us to control an error term in the inequality
satisfied by the inf-convolution of a supersolution. 

The following lemma collects some well known properties of inf-convolution. We postpone its proof into the appendix.
\begin{lem}
\label{lem:inf properties}Assume that $u:\Xi\rightarrow\mathbb{R}$ is lower semicontinuous
and bounded. Suppose also that $\delta_{\varepsilon}\rightarrow0$
as $\varepsilon\rightarrow0$. Then $u_{\varepsilon}$ has the following
properties.
\begin{enumerate}[label=(\roman*)]
\item We have $u_{\varepsilon}\leq u$ in $\Xi$ and $u_{\varepsilon}\rightarrow u$
pointwise as $\varepsilon\rightarrow0$.
\item Denote $r(\varepsilon):=(q\varepsilon^{q-1}\osc_{\Xi}u)^{\frac{1}{q}}$,
$t(\varepsilon):=(2\delta_{\varepsilon}\osc_{\Xi}u)^{\frac{1}{2}}$ and set
\[
\Xi_{\varepsilon}:=\left\{ (x,t)\in\Xi:B_{r(\varepsilon)}(x)\times(t-t(\varepsilon),t+t(\varepsilon))\Subset\Xi\right\} .
\]
Then, for any $(x,t)\in\Xi_{\varepsilon}$ there exists $(x_{\varepsilon},t_{\varepsilon})\in\overline{B}_{r(\varepsilon)}(x)\times[t-t(\varepsilon),t+t(\varepsilon)]$
such that
\[
u_{\varepsilon}(x,t)=u(x_{\varepsilon},t_{\varepsilon})+\frac{\left|x-x_{\varepsilon}\right|^{q}}{q\varepsilon^{q-1}}+\frac{\left|t-t_{\varepsilon}\right|^{2}}{2\delta_{\varepsilon}}.
\]
\item The function $u_{\varepsilon}$ is semi-concave in $\Xi_{\varepsilon}$.
In particular, the function $u_{\varepsilon}(x,t)-(C\left|x\right|^{2}+t^{2}/\delta_{\varepsilon})$
is concave in $\Xi_{\varepsilon}$, where $C=(q-1)r(\varepsilon)^{q-2}/\varepsilon^{q-1}$.
\item Suppose that $u_{\varepsilon}$ is twice differentiable in space and
time at $(x,t)\in\Xi_{\varepsilon}$. Then 
\begin{align*}
\partial_{t}u_{\varepsilon}(x,t) & =\frac{t-t_{\varepsilon}}{\delta_{\varepsilon}},\\
Du_{\varepsilon}(x,t) & =(x-x_{\varepsilon})\frac{\left|x-x_{\varepsilon}\right|^{q-2}}{\varepsilon^{q-1}},\\
D^{2}u_{\varepsilon}(x,t) & \leq(q-1)\frac{\left|x-x_{\varepsilon}\right|^{q-2}}{\varepsilon^{q-1}}I.
\end{align*}
\end{enumerate}
\end{lem}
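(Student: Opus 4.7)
The plan is to dispatch items (i)--(iv) in order, exploiting the coercivity of the penalty in $(y,s)$ and its convexity in $(x,t)$. Item (i) is immediate: the inequality $u_\varepsilon\le u$ follows by plugging $(y,s)=(x,t)$ into the infimum, while pointwise convergence $u_\varepsilon(x,t)\to u(x,t)$ is obtained by combining the diameter estimate in (ii) with the lower semicontinuity of $u$ applied along any sequence of near-minimizers. Item (ii) is a standard coercivity argument: any minimizer $(x_\varepsilon,t_\varepsilon)$ satisfies
\[
\frac{|x-x_\varepsilon|^q}{q\varepsilon^{q-1}}+\frac{|t-t_\varepsilon|^2}{2\delta_\varepsilon}\le u(x,t)-u(x_\varepsilon,t_\varepsilon)\le \osc_\Xi u,
\]
since the competitor $(y,s)=(x,t)$ gives value $u(x,t)$. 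Bounding each penalty term separately yields $|x-x_\varepsilon|\le r(\varepsilon)$ and $|t-t_\varepsilon|\le t(\varepsilon)$, and the definition of $\Xi_\varepsilon$ forces the corresponding closed box to sit inside $\Xi$, so existence of a minimizer follows from the Weierstrass theorem applied to the lower semicontinuous integrand on a compact set.

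For the semi-concavity (iii) I would write $u_\varepsilon(x,t)-C|x|^2-t^2/\delta_\varepsilon$ as the pointwise infimum over $(y,s)$ of
\[
h_{y,s}(x,t)=u(y,s)+\frac{|y-x|^q}{q\varepsilon^{q-1}}-C|x|^2+\frac{|s-t|^2}{2\delta_\varepsilon}-\frac{t^2}{\delta_\varepsilon},
\]
and verify that each $h_{y,s}$ is concave in $(x,t)$ on the relevant region. In time, direct expansion collapses the last two terms into a quadratic in $t$ with leading coefficient $-1/(2\delta_\varepsilon)<0$, hence concave. In space, the Hessian of $h_{y,s}$ with respect to $x$ is bounded above by $\bigl[(q-1)|y-x|^{q-2}/\varepsilon^{q-1}-2C\bigr]I$; using item (ii) to restrict attention to the effective range $|y-x|\le r(\varepsilon)$, the choice $C=(q-1)r(\varepsilon)^{q-2}/\varepsilon^{q-1}$ makes this expression nonpositive, so $h_{y,s}$ is concave in $x$. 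Since the pointwise infimum of a family of concave functions is concave, (iii) follows.

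Finally for (iv), at a point $(x,t)\in\Xi_\varepsilon$ where $u_\varepsilon$ is twice differentiable, take the minimizer $(x_\varepsilon,t_\varepsilon)$ from (ii) and exploit the smooth upper barrier
\[
\Psi(x',t')=u(x_\varepsilon,t_\varepsilon)+\frac{|x_\varepsilon-x'|^q}{q\varepsilon^{q-1}}+\frac{|t_\varepsilon-t'|^2}{2\delta_\varepsilon},
\]
which satisfies $u_\varepsilon\le\Psi$ on $\Xi_\varepsilon$ with equality at $(x,t)$. Comparing first- and second-order Taylor expansions of the two sides at $(x,t)$ forces $\partial_t u_\varepsilon(x,t)=\partial_t\Psi(x,t)$, $Du_\varepsilon(x,t)=D_x\Psi(x,t)$, and $D^2u_\varepsilon(x,t)\le D_x^2\Psi(x,t)$, giving exactly the three stated formulas. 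The \emph{main bookkeeping obstacle} in the whole lemma is the case $q>2$ (that is, $1<p<2$), where the spatial penalty is no longer quadratic and both the Hessian bound and the semi-concavity constant genuinely depend on the location of the minimizer; but once (ii) has localized that minimizer inside $\overline{B}_{r(\varepsilon)}(x)$, the uniform estimate $|x-x_\varepsilon|\le r(\varepsilon)$ on $\Xi_\varepsilon$ absorbs this dependence into the single constant $C$, and the remaining calculations are routine.
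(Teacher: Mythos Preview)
Your approach matches the paper's in spirit for all four items, and your treatment of (i), (ii), and (iv) is essentially what the paper does (the paper phrases (iv) via a test function $\varphi$ touching $u_\varepsilon$ from below rather than your barrier $\Psi$ touching from above, but these are dual and equally valid).

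There is, however, a genuine technical gap in your argument for (iii). You write $u_\varepsilon(x,t)-C|x|^2-t^2/\delta_\varepsilon$ as $\inf_{(y,s)\in\Xi} h_{y,s}(x,t)$ and then invoke item (ii) to ``restrict attention to the effective range $|y-x|\le r(\varepsilon)$'' so that the Hessian bound holds. The problem is that this effective range depends on the point $(x,t)$ at which you evaluate, so you are not taking an infimum over a \emph{fixed} family of concave functions; the family moves with $(x,t)$, and ``infimum of concave is concave'' does not apply directly. For $(y,s)$ far from $(x,t)$ the function $h_{y,s}$ is genuinely not concave in $x$ when $q>2$, and you cannot simply discard those functions from the infimum without further argument.

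The paper fixes this by localizing: for any $(z,\tau)\in\Xi_\varepsilon$ and small $h>0$, it takes $(x,t)$ in the neighborhood $B_h(z)\times(\tau-h,\tau+h)\subset\Xi_\varepsilon$ and observes that the infimum defining $u_\varepsilon(x,t)$ can then be restricted to the \emph{fixed} set $\overline{B}_{r(\varepsilon)+h}(z)\times[\tau-t(\varepsilon)-h,\tau+t(\varepsilon)+h]$, independent of $(x,t)$. On that fixed set $|y-x|\le r(\varepsilon)+h$, so each $h_{y,s}$ is concave with the slightly larger constant $C_h=(q-1)(r(\varepsilon)+h)^{q-2}/\varepsilon^{q-1}$; one gets local concavity with constant $C_h$ and then lets $h\to 0$ to recover the stated $C$. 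This is exactly the ``bookkeeping obstacle'' you flag in your last paragraph, but your text does not actually carry out the fix.
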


The following lemma gives an estimate for the penalty term
in inf-convolution.
\begin{lem}
\label{lem:inf conv error est} Suppose that $u:\Xi\rightarrow\mathbb{R}$
is uniformly continuous. Let $\omega_{t}$ and $\omega_{x}$ be bounded
moduli of continuity of $u$ in time and space, respectively. Suppose
moreover that $\omega_{t}$ is strictly increasing. Let $u_{\varepsilon}$
be defined by (\ref{eq:inf conv special}), where 
\begin{equation}\label{eq:delta_epsilon}
\delta_{\varepsilon}:=\frac{(\omega_{t}^{-1}(\varepsilon^{q-1}))^{2}}{2\osc_{\Xi}u}.
\end{equation}
Let $(x_{\varepsilon},t_{\varepsilon})$ be such that 
\begin{align*}
 u_{\varepsilon}(x,t)=u(x_{\varepsilon},t_{\varepsilon})+\frac{\left|x-x_{\varepsilon}\right|^{q}}{q\varepsilon^{q-1}}+\frac{\left|t_{\varepsilon}-t\right|^{2}}{2\delta_{\varepsilon}}.   
\end{align*}
Then
\[
\frac{\left|t-t_{\varepsilon}\right|^{2}}{2\delta_{\varepsilon}}\leq\varepsilon^{q-1}\quad\text{and}\quad\left|x-x_{\varepsilon}\right|\leq q^{1/q}\varepsilon^{\frac{q-1}{q}}\omega_{x}^{1/q}(\left|x-x_{\varepsilon}\right|).
\]
\end{lem}

\begin{proof}
By the definition of inf-convolution, we have
\begin{align*}
 & \frac{\left|t-t_{\varepsilon}\right|^{2}}{2\delta_{\varepsilon}}\le u(x_{\varepsilon},t)-u(x_{\varepsilon},t_{\varepsilon}),\\
 & \frac{\left|x-x_{\varepsilon}\right|^{q}}{q\varepsilon^{q-1}}\leq u(x,t_{\varepsilon})-u(x_{\varepsilon},t_{\varepsilon}).
\end{align*}
From the first inequality, we estimate $\left|t-t_{\varepsilon}\right|\leq\sqrt{2\delta_{\varepsilon}\osc_{\Xi}u}$.
Using the first inequality again and the definition of $\delta_{\varepsilon}$,
we obtain
\[
\frac{\left|t-t_{\varepsilon}\right|^{2}}{2\delta_{\varepsilon}}\leq\omega_{t}(\left|t-t_{\varepsilon}\right|)\leq\omega_{t}\left(\sqrt{2\delta_{\varepsilon}\osc_{\Xi}u}\right)\leq\omega_{t}\left(\omega_{t}^{-1}(\varepsilon^{q-1})\right)=\varepsilon^{q-1}.
\]
Similarly, from the second inequality we obtain that $\left|x-x_{\varepsilon}\right|\leq(q\varepsilon^{q-1}\omega_{x}(\left|x-x_{\varepsilon}\right|))^{1/q}$.
\end{proof}
The next elementary inequality immediately follows by integrating the derivative of $h\mapsto h^{\frac{2-p}{2}}$. For the convenience of the reader this is written down in the appendix.
\begin{lem}[Elementary inequality]
\label{lem: elem ineq} Let $p>1$. Suppose that $a\geq m>0$. Then
for all $s>0$ we have
\[
\left|(a+s)^{\frac{2-p}{2}}-a^{\frac{2-p}{2}}\right|\leq C(p,m)s.
\]
\end{lem}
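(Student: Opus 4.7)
The plan is to apply the fundamental theorem of calculus to the function $f(h) = h^{(2-p)/2}$, whose derivative $f'(h) = \tfrac{2-p}{2}\, h^{-p/2}$ is continuous on $(0,\infty)$. Writing
\[
(a+s)^{\frac{2-p}{2}} - a^{\frac{2-p}{2}} = \int_{a}^{a+s} \frac{2-p}{2}\, h^{-p/2}\d h,
\]
I would take absolute values and estimate the integrand. Since $p>0$, the map $h \mapsto h^{-p/2}$ is decreasing, so for every $h \in [a, a+s]$ we have $h^{-p/2} \le a^{-p/2} \le m^{-p/2}$, using the hypothesis $a \ge m > 0$. This bound is uniform in $s$ and is precisely the place where positivity of $a$ (away from zero) is essential — if we only had $a \ge 0$, the factor $h^{-p/2}$ would blow up when $p>2$ and no constant independent of $a$ would exist.

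Combining these observations gives
\[
\left|(a+s)^{\frac{2-p}{2}} - a^{\frac{2-p}{2}}\right| \le \frac{|2-p|}{2}\, m^{-p/2}\, s,
\]
so the claim holds with $C(p,m) = \tfrac{|2-p|}{2}\, m^{-p/2}$. (The case $p=2$ is trivial, as both sides vanish.) Note that the restriction $s \in (0,1)$ is not needed for the estimate itself — any $s \ge 0$ works — but it is the range relevant for later applications in the paper.

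The only potential obstacle is purely notational: one must remember that $(2-p)/2$ can be negative (when $p > 2$) and that the sign of the difference $(a+s)^{(2-p)/2} - a^{(2-p)/2}$ therefore depends on the range of $p$. Taking absolute values from the outset, as above, sidesteps this issue entirely and reduces the proof to a one-line integral estimate.
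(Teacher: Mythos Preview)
Your proof is correct and follows essentially the same approach as the paper: integrate the derivative of $h\mapsto h^{(2-p)/2}$ from $a$ to $a+s$ and bound $|h^{-p/2}|$ on that interval using $a\ge m$. Your version is in fact slightly more explicit, giving the constant $C(p,m)=\tfrac{|2-p|}{2}\,m^{-p/2}$ directly.
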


Next we prove a key lemma: the inf-convolution of a positive, uniformly
continuous viscosity supersolution to Trudinger's equation solves
the same equation with \emph{an additional first-order error term}. Let us
briefly highlight key arguments of the proof before the full details.
Observe that formally, a positive solution $u$ to Trudinger's equation
solves
\[
(p-1)\partial_{t}u-u^{2-p}\Delta_{p}u\geq0.
\]
This form is useful for estimating the error term generated by the
inf-convolution $u_{\varepsilon}$, as the error generating $u$-dependence
is now decoupled from the time derivative to the elliptic term and
the Jensen-Ishii lemma can be used to control the behavior of the
elliptic term. Using also the assumption that $0<m\leq u\leq M$,
the argument in \cite{Ishii95} and for the $p$-Laplacian in \cite{siltakoski18} readily
gives that $u_{\varepsilon}$ is a viscosity supersolution to
\begin{equation}
(p-1)u_{\varepsilon}^{p-2}\partial_{t}u_{\varepsilon}-\Delta_{p}u_{\varepsilon}\geq-\mathcal{E}\quad\text{in}\quad\Xi_{\varepsilon},\label{eq:blabla}
\end{equation}
where $\Xi_{\varepsilon}$ is an in Lemma \ref{lem:inf properties}
and the error term has the form (assume for simplicity here that $p\geq2$
so that $q=2$)
\[
\mathcal{E}=C\frac{\left|Du_{\varepsilon}\right|^{p-2}}{\varepsilon}\left(\frac{\left|x-x_{\varepsilon}\right|^{4}}{\varepsilon^{2}}+\frac{\left|t-t_{\varepsilon}\right|^{4}}{\delta_{\varepsilon}^{2}}\right).
\]
Here, roughly speaking, the term $\left|Du_{\varepsilon}\right|^{p-2}/\varepsilon$
is generated by $\Delta_{p}u$ and the term in brackets comes from
$u^{2-p}$. The term with $\left|t-t_{\varepsilon}\right|$ can be
estimated via Lemma \ref{lem:inf conv error est} and the choice of
$\delta_{\varepsilon}$. The term with $\left|x-x_{\varepsilon}\right|^{4}/\varepsilon^{3}$
is trickier to deal with, as this ratio may blow up as $\varepsilon\rightarrow0$
unless $u$ is known to be, say, Lipschitz. To deal with the problem, we
use the fact that $Du_{\varepsilon}=\frac{x-x_{\varepsilon}}{\varepsilon}$.
This together with the uniform continuity of $u$ lets us absorb some
of the problematic ratio to the gradient part in (\ref{eq:error est 2}). This way, we arrive
at the estimate
\[
\mathcal{E}\leq E_{\varepsilon}\left(\left|Du_{\varepsilon}\right|^{p}h(\left|Du_{\varepsilon}\right|)+\left|Du_{\varepsilon}\right|^{p-2}\right),
\]
where $h:[0,\infty)\rightarrow[0,\infty)$ is continuous (and thus bounded at $0$) and $h(s)\rightarrow0$
as $s\rightarrow \infty$ and $E_{\varepsilon}$ is a number that converges
to zero as $\varepsilon\rightarrow0$. The term $h(\left|Du_{\varepsilon}\right|)$
is convenient in (\ref{eq:energy est 5}), as its presence lets us prove an energy estimate
that ensures the weak convergence of a subsequence of $Du_{\varepsilon}$ in $L_{loc}^{p}$.
\begin{lem}
\label{lem:inf conv is super} Let $u$ be a uniformly continuous
viscosity supersolution to 
\[
(p-1)u^{p-2}\partial_{t}u-\Delta_{p}u \ge 0\quad\text{in}\quad\Xi.
\]
Suppose that $0<m\leq u\leq M$ in $\Xi$ and let $u_{\varepsilon}$
be defined by (\ref{eq:inf conv special}), where $\delta_\varepsilon$ is as in (\ref{eq:delta_epsilon}). Then, if $(\theta,\eta,X)\in\mathcal{P}^{2,-}u_{\varepsilon}(x,t)$
with $\eta\not=0$, and $(x,t)\in\Xi_{\varepsilon}$, we have
\[
(p-1)u_{\varepsilon}^{p-2}\theta-\left|\eta\right|^{p-2}\tr\left((I+(p-2)\frac{\eta\otimes\eta}{\left|\eta\right|^{2}})X\right)\geq-E_{\varepsilon}\left(\left|\eta\right|^{p}h(\left|\eta\right|)+\left|\eta\right|^{\max(p-2,0)}\right),
\]
where $E_{\varepsilon}\geq0$ is a number that converges to zero as
$\varepsilon\rightarrow0$, $h:[0,\infty)\rightarrow\mathbb{R}$ is
continuous, strictly decreasing and $h(s)\rightarrow0$ as $s\rightarrow\infty$. 

Furthermore, in the case that $(\theta,0,X)\in\mathcal{P}^{2,-}u_{\varepsilon}(x,t)$,
we have $\theta\geq0$. 
\end{lem}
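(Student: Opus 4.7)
The plan is to transfer the subjet from~$u_\varepsilon$ at $(x,t)$ to~$u$ at the attained minimizer $(x_\varepsilon,t_\varepsilon)$ via the standard translation argument for inf-convolution, apply the viscosity supersolution inequality for~$u$, and then bring the inequality back to~$u_\varepsilon$ at the cost of an error generated by the $u$-dependent coefficient $u^{p-2}$. Fix $(\theta,\eta,X)\in\mathcal{P}^{2,-}u_\varepsilon(x,t)$ with $\eta\neq 0$ and let $(x_\varepsilon,t_\varepsilon)$ be a point attaining the infimum in \eqref{eq:inf conv special}, whose existence is guaranteed by Lemma~\ref{lem:inf properties}(ii). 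If $\phi\in C^2$ touches~$u_\varepsilon$ from below at $(x,t)$ with derivatives $(\theta,\eta,X)$, then the shifted test function
\[
\tilde\phi(y,s):=\phi\bigl(y+(x-x_\varepsilon),\,s+(t-t_\varepsilon)\bigr)-\tfrac{|x-x_\varepsilon|^{q}}{q\varepsilon^{q-1}}-\tfrac{|t-t_\varepsilon|^{2}}{2\delta_\varepsilon}
\]
touches~$u$ from below at $(x_\varepsilon,t_\varepsilon)$ with identical first and second derivatives, so $(\theta,\eta,X)\in\mathcal{P}^{2,-}u(x_\varepsilon,t_\varepsilon)$. Because $u>0$, the viscosity supersolution inequality at $(x_\varepsilon,t_\varepsilon)$ is equivalent to the decoupled form $(p-1)\theta\geq u^{2-p}(x_\varepsilon,t_\varepsilon)F(\eta,X)$, where $F(\eta,X):=|\eta|^{p-2}\tr\bigl((I+(p-2)\eta\otimes\eta/|\eta|^{2})X\bigr)$.

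Multiplying this inequality by $u_\varepsilon^{p-2}(x,t)>0$ and rearranging produces
\[
(p-1)u_\varepsilon^{p-2}(x,t)\theta-F(\eta,X)\;\geq\;u_\varepsilon^{p-2}(x,t)\bigl[u^{2-p}(x_\varepsilon,t_\varepsilon)-u_\varepsilon^{2-p}(x,t)\bigr]F(\eta,X),
\]
so the remaining task is to bound the right-hand side from below by $-E_\varepsilon(|\eta|^{p}h(|\eta|)+|\eta|^{\max(p-2,0)})$. Writing $u_\varepsilon(x,t)=u(x_\varepsilon,t_\varepsilon)+\Psi_\varepsilon$ with $\Psi_\varepsilon:=\tfrac{|x-x_\varepsilon|^{q}}{q\varepsilon^{q-1}}+\tfrac{|t-t_\varepsilon|^{2}}{2\delta_\varepsilon}$, the elementary inequality of Lemma~\ref{lem: elem ineq} (applied with exponent $2-p$, together with the bounds $m\leq u,u_\varepsilon\leq M$) yields $\bigl|u^{2-p}(x_\varepsilon,t_\varepsilon)-u_\varepsilon^{2-p}(x,t)\bigr|\leq C(p,m,M)\Psi_\varepsilon$. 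If $F(\eta,X)$ is highly negative the term $-F(\eta,X)$ already dominates the left-hand side, making the inequality trivial; in the remaining regime the semi-concavity bound $X\leq 2C_\varepsilon I$ from Lemma~\ref{lem:inf properties}(iii), with $C_\varepsilon:=(q-1)r(\varepsilon)^{q-2}/\varepsilon^{q-1}$, combined with the positive-definiteness of $I+(p-2)\eta\otimes\eta/|\eta|^{2}$, gives the one-sided bound $F(\eta,X)\leq C(p,N)C_\varepsilon|\eta|^{p-2}$.

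It remains to estimate $\Psi_\varepsilon\cdot C_\varepsilon|\eta|^{p-2}$. The time contribution is controlled via $\tfrac{|t-t_\varepsilon|^{2}}{2\delta_\varepsilon}\leq\varepsilon^{q-1}$ from Lemma~\ref{lem:inf conv error est}, producing a piece of the form $E_\varepsilon|\eta|^{\max(p-2,0)}$. The spatial contribution demands the absorption described in the heuristic preceding the lemma: combining the subjet gradient identity $|\eta|=|x-x_\varepsilon|^{q-1}/\varepsilon^{q-1}$ from Lemma~\ref{lem:inf properties}(iv) with the uniform continuity bound $|x-x_\varepsilon|\leq q^{1/q}\varepsilon^{(q-1)/q}\omega_x^{1/q}(|x-x_\varepsilon|)$ rewrites this piece as $|\eta|^{p}h(|\eta|)$ modulo a factor $E_\varepsilon\to 0$, where $h$ is a continuous decreasing function built from $\omega_x$ and satisfies $h(s)\to 0$ as $s\to\infty$. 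This spatial absorption is the main technical obstacle, since the naive product $\Psi_\varepsilon C_\varepsilon$ does not vanish with~$\varepsilon$ and only the uniform continuity of~$u$ supplies the required extra smallness. For the case $(\theta,0,X)\in\mathcal{P}^{2,-}u_\varepsilon(x,t)$, the same subjet transfer gives $(\theta,0,X)\in\mathcal{P}^{2,-}u(x_\varepsilon,t_\varepsilon)$; the limsup form of the viscosity definition, applied to a test function perturbed so that $\Delta_p\phi$ remains controlled as $D\phi\to 0$, then yields $(p-1)u^{p-2}(x_\varepsilon,t_\varepsilon)\theta\geq 0$ and hence $\theta\geq 0$.
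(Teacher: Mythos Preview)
Your simple subjet transfer together with the one-sided semi-concavity bound is not enough to produce an error that vanishes as $\varepsilon\to 0$. There are two related problems.

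First, the absorption claim ``if $F(\eta,X)$ is highly negative then $-F$ dominates the left-hand side'' does not go through. The only lower bound you have on $\theta$ comes from the viscosity inequality itself, $(p-1)\theta\geq u^{2-p}(x_\varepsilon,t_\varepsilon)F(\eta,X)$; so when $F$ is very negative $\theta$ may be equally negative, and after subtraction the left-hand side reduces exactly to $\alpha F(\eta,X)$ with no gain. Since subjets are downward closed in $X$, arbitrarily negative $X$ cannot be excluded, and semi-concavity $X\leq 2C_\varepsilon I$ only bounds $F$ from above, not $|F|$.

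Second, and more fundamentally, even in the favorable regime your error is one order too large. Take $q=2$. Your route gives at best
\[
|\alpha|\,|F(\eta,X)| \;\lesssim\; \Psi_\varepsilon\cdot C_\varepsilon\,|\eta|^{p-2},
\]
whose spatial part is $\tfrac{|x-x_\varepsilon|^{2}}{\varepsilon}\cdot\tfrac{1}{\varepsilon}\cdot|\eta|^{p-2}=|\eta|^{p}$, with no factor tending to zero. The elementary inequality of Lemma~\ref{lem: elem ineq} only yields $|u^{2-p}(x_\varepsilon,t_\varepsilon)-u_\varepsilon^{2-p}(x,t)|\leq C\Psi_\varepsilon$ \emph{linearly} in $\Psi_\varepsilon$, and uniform continuity cannot manufacture the missing smallness from this.

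The paper avoids both issues by doubling variables and invoking the Jensen--Ishii lemma rather than the naive subjet transfer. This produces matrices $Y,Z$ and the block inequality \eqref{eq:matrix ineq}; testing it against vectors $\bigl(u^{(2-p)/2}(x_\varepsilon,t_\varepsilon)\tfrac{\eta}{|\eta|},\,u_\varepsilon^{(2-p)/2}(x,t)\tfrac{\eta}{|\eta|}\bigr)$ gives directly
\[
u_\varepsilon^{2-p}(x,t)\tfrac{\eta'Z\eta}{|\eta|^2} - u^{2-p}(x_\varepsilon,t_\varepsilon)\tfrac{\eta'Y\eta}{|\eta|^2} \;\geq\; -\bigl(u_\varepsilon^{(2-p)/2}-u^{(2-p)/2}\bigr)^{2}\bigl(\|B\|+2\varepsilon^{q-1}\|B^2\|\bigr),
\]
so the error is \emph{quadratic} in $\Psi_\varepsilon$, with $\|B\|\sim |x-x_\varepsilon|^{q-2}/\varepsilon^{q-1}$. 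For $q=2$ the spatial error becomes
\[
|\eta|^{p-2}\,\frac{\Psi_\varepsilon^{2}}{\varepsilon}\;\sim\;|\eta|^{p}\cdot\frac{|x-x_\varepsilon|^{2}}{\varepsilon}\;\leq\;2|\eta|^{p}\,\omega_x(|x-x_\varepsilon|),
\]
and it is precisely this extra factor of $\omega_x$ that allows the splitting into $E_\varepsilon|\eta|^{p}h(|\eta|)$ carried out in \eqref{eq:error est 2}. The square --- coming from the Ishii--Lions matrix trick --- is the missing idea in your argument; without it the scheme you describe cannot close.
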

%%%%%%%%%%%%%%%%%%%%%%%%%%%%%%%%%%%%
\begin{proof}
Let us first consider the case $\eta\neq 0$.
Since $(x,t)\in\Xi_{\varepsilon}$, by Lemma \ref{lem:inf properties},
there is $(x_{\varepsilon},t_{\varepsilon})$ such that 
\begin{equation}\label{eq:inf lemma 21}
u_{\varepsilon}(x,t)=u(x_{\varepsilon},t_{\varepsilon})+\frac{\left|x-x_{\varepsilon}\right|^{q}}{q\varepsilon^{q-1}}+\frac{\left|t-t_{\varepsilon}\right|^{2}}{2\delta_{\varepsilon}}=:u(x_{\varepsilon},t_{\varepsilon})+A_\varepsilon,
\end{equation}
where $\delta_\varepsilon$ is defined by (\ref{eq:delta_epsilon}).
It was assumed that $(\theta, \eta,X)\in\mathcal{P}^{2,-}u_{\varepsilon}(x,t)$, so there exists
$\varphi\in C^{2}$ such that it touches $u_{\varepsilon}$ from below
at $(x,t)$ and $(\partial_{t}\varphi(x,t),D\varphi(x,t),D^{2}\varphi(x,t))=(\theta,\eta,X)$.
By direct computation
\[
\theta=\frac{t-t_{\varepsilon}}{\delta_{\varepsilon}}\quad\text{and}\quad\eta=\frac{x-x_{\varepsilon}}{\varepsilon^{q-1}}\left|x-x_{\varepsilon}\right|^{q-2}.
\]
Using the definition of inf-convolution and that $\varphi$ touches
$u_{\varepsilon}$ from below at $(x,t)$, we have for all $(y,s),(z,\tau)\in\Xi$
that
\begin{align}
-u(y,s)+\varphi(z,\tau)-\frac{\left|y-z\right|^{q}}{q\varepsilon^{q-1}}-\frac{\left|s-\tau\right|^{2}}{2\delta_{\varepsilon}} & \leq-u_{\varepsilon}(z,\tau)+\varphi(z,\tau)\nonumber \\
 & \leq 0\label{eq:inf conv is super 1} \\
 &=-u_{\varepsilon}(x,t)+\varphi(x,t) \nonumber\\
 & =-u(x_{\varepsilon},t_{\varepsilon})+\varphi(x,t)-\frac{\left|x-x_{\varepsilon}\right|^{q}}{q\varepsilon^{q-1}}-\frac{\left|t_{\varepsilon}-t\right|^{2}}{2\delta_{\varepsilon}}.\nonumber
\end{align}
That is, if we denote
\[
\Phi(y,s,z,\tau):=\frac{\left|y-z\right|^{q}}{q\varepsilon^{q-1}}+\frac{\left|s-\tau\right|^{2}}{2\delta_{\varepsilon}},
\]
the function $(y,s,z,\tau)\mapsto-u(y,s)+\varphi(z,\tau)-\Phi(y,s,z,\tau)$
has maximum at the point $(y,s,z,\tau)=(x_{\varepsilon},t_{\varepsilon},x,t)=:a$. %This implies that $(z,l)\mapsto  \varphi(z,l)-\Phi(x_{\eps},t_{\eps},z,l)$ touches $u$ at $x_{\eps},t_{\eps}$ from below. 

Invoking the Jensen-Ishii lemma (Theorem on Sums), we conjure symmetric $N\times N$
matrices $Z$ and $Y$ such that
\begin{align*}
(\partial_{s}\Phi(a),D_{y}\Phi(a),Y) & \in\overline{\mathcal{P}}^{2,+}(-u(x_{\varepsilon},t_{\varepsilon})),\\
(-\partial_{\tau}\Phi(a),-D_{z}\Phi(a),Z) & \in\overline{\mathcal{P}}^{2,-}(-\varphi(x,t)).
\end{align*}
Since $\partial_{s}\Phi(a)=\frac{t_{\varepsilon}-t}{\delta_{\varepsilon}}=-\theta=-\partial_{\tau}\Phi(a)$
and $D_{y}\Phi(a)=\frac{x_{\varepsilon}-x}{\varepsilon^{q-1}}\left|x-x_{\varepsilon}\right|^{q-2}=-\eta=-D_{z}\Phi(x,x_{\varepsilon})$,
these can be written as
\[
(\theta,\eta,-Y)\in\overline{\mathcal{P}}^{2,-}u(x_{\varepsilon},t_{\varepsilon})\quad\text{and}\quad(\theta,\eta,-Z)\in\overline{\mathcal{P}}^{2,+}\varphi(x,t).
\]
Moreover, we have
\begin{align}
\begin{pmatrix}Y & 0\\
0 & -Z
\end{pmatrix} & \leq D_{(y,z)}^{2}\Phi(x_{\varepsilon},x)+\varepsilon^{q-1}(D_{(y,z)}^{2}\Phi(x_{\varepsilon},x))^{2}\nonumber \\
 & =\begin{pmatrix}B & -B\\
-B & B
\end{pmatrix}+2\varepsilon^{q-1}\begin{pmatrix}B^{2} & -B^{2}\\
-B^{2} & B^{2}
\end{pmatrix},\label{eq:matrix ineq}
\end{align}
where 
\[
B=\frac{1}{\varepsilon^{q-1}}\left|x-x_{\varepsilon}\right|^{q-4}((q-2)(x_{\varepsilon}-x)\otimes(x_{\varepsilon}-x)+\left|x_{\varepsilon}-x\right|^{2}I).
\]
This implies  $Y\leq Z\leq-D^{2}\varphi(x,t)=-X$. Denote
below
\[
F(\eta,X):=-\left|\eta\right|^{p-2}(\tr X+(p-2)\frac{\eta^{\prime}X\eta}{\left|\eta\right|^{2}}),
\]
where $\eta^{\prime}$ denotes the transpose of the column vector $\eta$.
Since $u$ is a viscosity supersolution, we have by the degenerate ellipticity
of $F$ that
\begin{align*}
 & (p-1)\theta+\frac{1}{u_{\varepsilon}^{p-2}(x,t)}F(\eta,X)\\
 & =(p-1)\theta+\frac{\left|\eta\right|^{p-2}}{u^{p-2}(x_{\varepsilon},t_{\varepsilon})}F(\eta,-Y)-\frac{\left|\eta\right|^{p-2}}{u^{p-2}(x_{\varepsilon},t_{\varepsilon})}F(\eta,-Y)+\frac{\left|\eta\right|^{p-2}}{u_{\varepsilon}^{p-2}(x,t)}F(\eta,X)\\
 & \geq-\frac{\left|\eta\right|^{p-2}}{u^{p-2}(x_{\varepsilon},t_{\varepsilon})}F(\eta,-Y)+\frac{\left|\eta\right|^{p-2}}{u_{\varepsilon}^{p-2}(x,t)}F(\eta,-Z)\\
 & =\frac{\left|\eta\right|^{p-2}}{u_{\varepsilon}^{p-2}(x,t)}\tr((I+(p-2)\frac{\eta\otimes\eta}{\left|\eta\right|^{2}})Z)-\frac{\left|\eta\right|^{p-2}}{u^{p-2}(x_{\varepsilon},t_{\varepsilon})}\tr((I+(p-2)\frac{\eta\otimes\eta}{\left|\eta\right|^{2}})Y).
\end{align*}
Assuming that $\varepsilon$ is small enough, we may suppose that
$\left|x-x_{\varepsilon}\right|\leq1$. We multiply the inequality
(\ref{eq:matrix ineq}) by the vector $(u^{\frac{2-p}{2}}(x_{\varepsilon},t_{\varepsilon})\frac{\eta'}{\left|\eta\right|}, u_{\varepsilon}^{\frac{2-p}{2}}(x,t)\frac{\eta'}{\left|\eta\right|})$ from left and right,
and use the elementary inequality in Lemma \ref{lem: elem ineq} to
obtain
\begin{align*}
u_{\varepsilon}^{2-p}(x,t)\frac{\eta^{\prime}Z\eta}{\left|\eta\right|^{2}}-u^{2-p}(x_{\varepsilon},t_{\varepsilon})\frac{\eta^{\prime}Y\eta}{\left|\eta\right|^{2}} & \geq-\left(u_{\varepsilon}^{\frac{2-p}{2}}(x,t)-u^{\frac{2-p}{2}}(x_{\varepsilon},t_{\varepsilon})\right)^{2}\frac{\eta^{\prime}}{\left|\eta\right|}\left(B+2\varepsilon^{q-1}B^{2}\right)\frac{\eta^{\prime}}{\left|\eta\right|}\\
 & \geq-\left(u_{\varepsilon}^{\frac{2-p}{2}}(x,t)-u^{\frac{2-p}{2}}(x_{\varepsilon},t_{\varepsilon})\right)^{2}(\left\Vert B\right\Vert +2\varepsilon^{q-1}\left\Vert B^{2}\right\Vert )\\
 & \geq-\frac{C(q)}{\varepsilon^{q-1}}\left|x-x_{\varepsilon}\right|^{q-2}\left(u_{\varepsilon}^{\frac{2-p}{2}}(x,t)-u^{\frac{2-p}{2}}(x_{\varepsilon},t_{\varepsilon})\right)^{2}\\
 & =-\frac{C(q)}{\varepsilon^{q-1}}\left|x-x_{\varepsilon}\right|^{q-2}\left((u(x_{\varepsilon},t_{\varepsilon})+A_\varepsilon)^{\frac{2-p}{2}}-u^{\frac{2-p}{2}}(x_{\varepsilon},t_{\varepsilon})\right)^{2}\\
 & \geq-C(m,p,q)\frac{\left|x-x_{\varepsilon}\right|^{q-2}}{\varepsilon^{q-1}}A^{2}_\varepsilon,
\end{align*}
where the inequality $\left|x-x_{\varepsilon}\right|\leq1$ was used
in the estimate of $\left\Vert B^{2}\right\Vert $, $\norm{B}=\max\{\abs{\lambda}\,:\,\lambda \text{ is an eigenvalue of }B\}$, and $A_\varepsilon$ was defined in (\ref{eq:inf lemma 21}). Similarly, multiplying
the matrix inequality (\ref{eq:matrix ineq}) by the vectors $(u_{\varepsilon}^{2-p}(x,t)e_{i},u^{2-p}(x_{\varepsilon},t_{\varepsilon})e_{i})$,
$i\in\left\{ 1,\ldots,N\right\} $, we see that
\[
u_{\varepsilon}^{2-p}(x,t)\tr(Z)-u^{2-p}(x_{\varepsilon},t_{\varepsilon})\tr(Y)\geq-C(N,m,p,q)\frac{\left|x-x_{\varepsilon}\right|^{q-2}}{\varepsilon^{q-1}}A^{2}_\varepsilon.
\]
Thus, combining the last three displays, we arrive at
\begin{equation}
(p-1)\theta+\frac{1}{u_{\varepsilon}^{p-2}(x,t)}F(\eta,X)\geq-C(N,p,q,m)\left|\eta\right|^{p-2}\frac{\left|x-x_{\varepsilon}\right|^{q-2}}{\varepsilon^{q-1}}A^{2}_\varepsilon.\label{eq:inf conv is super 3}
\end{equation}
Now we proceed to estimate the error term. Let us consider the cases $1<p<2$ and $p\geq2$ separately.

\textbf{Case $p\geq2$:} We use the definition of $A_\varepsilon$, the choice
$q=2$, the inequality $(c+d)^{2}\leq 2(c^{2}+d^{2})$, and the estimate
$\frac{\left|t-t_{\varepsilon}\right|^{2}}{\delta_{\varepsilon}}\leq 2 \varepsilon$
from Lemma \ref{lem:inf conv error est} to obtain
\begin{align}
\frac{A^{2}_\eps}{\varepsilon}\leq\frac{C}{\varepsilon}(\frac{\left|x-x_{\varepsilon}\right|^{4}}{\varepsilon^{2}}+\frac{\left|t-t_{\varepsilon}\right|^{4}}{\delta_{\varepsilon}^{2}}) & \leq C\frac{\left|x-x_{\varepsilon}\right|^{4}}{\varepsilon^{3}}+C\varepsilon.\label{eq:error est 1}
\end{align}
Next, recall that by Lemma \ref{lem:inf conv error est} we have
\begin{equation}
\left|x-x_{\varepsilon}\right|\leq2^{1/2}\omega_{x}^{1/2}(\left|x-x_{\varepsilon}\right|)\varepsilon^{1/2}\leq C_{0}\varepsilon^{1/2}\label{eq:error est 3}
\end{equation}
We define the function $h:[0,\infty)\rightarrow[0,\infty)$ by $h(s):=\omega_{x}^{1/2}(C_{0}^{2}s^{-1})$
when $s\in(0,\infty)$ and $h(0):=\lim_{s\rightarrow0}h(s)$. Observe
that since $u$ is bounded, we may take $\omega_x$ to be constant for large values. Therefore, $h$ is well defined, continuous, strictly decreasing and $h(s)\rightarrow0$
as $s\rightarrow\infty$. Using that $\left|\eta\right|=\frac{\left|x-x_{\varepsilon}\right|}{\varepsilon}$,
we estimate using both inequalities in (\ref{eq:error est 3})
\begin{align}
\frac{\left|x-x_{\varepsilon}\right|^{4}}{\varepsilon^{3}}=\left|\eta\right|^{2}h(\left|\eta\right|)\frac{\left|x-x_{\varepsilon}\right|^{2}}{\varepsilon}\cdot\frac{1}{h(\frac{\left|x-x_{\varepsilon}\right|}{\varepsilon})} & \leq2\left|\eta\right|^{2}h(\left|\eta\right|)\frac{\omega_{x}(\left|x-x_{\varepsilon}\right|)}{h(\frac{C_{0}}{\varepsilon^{1/2}})}\nonumber \\
 & \leq2\left|\eta\right|^{2}h(\left|\eta\right|)\frac{\omega_{x}(C_{0}\varepsilon^{1/2})}{h(\frac{C_{0}}{\varepsilon^{1/2}})}\nonumber \\
 & =2\left|\eta\right|^{2}h(\left|\eta\right|)\frac{\omega_{x}(C_{0}\varepsilon^{1/2})}{\omega_{x}^{1/2}(C_{0}^{2}\frac{\varepsilon^{1/2}}{C_{0}})}\nonumber \\
 & =2\left|\eta\right|^{2}h(\left|\eta\right|)\omega_{x}^{1/2}(C_{0}\varepsilon^{1/2}).\label{eq:error est 2}
\end{align}
Combining (\ref{eq:inf conv is super 3}), (\ref{eq:error est 1})
and (\ref{eq:error est 2}), we have
\[
(p-1)\theta-\frac{1}{u_{\varepsilon}^{p-2}(x,t)}F(\eta,X)\geq-E_{\varepsilon}(\left|\eta\right|^{p}h(\left|\eta\right|)+\left|\eta\right|^{p-2}),
\]
where we denoted
\begin{equation*}
    E_{\varepsilon}:=C(N,p,m,M)\max(\omega_{x}^{1/2}(C_{0}\varepsilon^{1/2}),\varepsilon).
\end{equation*}
This yields the desired estimate in the case $p\geq2$ since $C^{-1}(p,m,M)\leq u_{\varepsilon}^{p-2}\leq C(p,m,M)$.

\textbf{Case $1<p<2$:} Now $q\not=2$, so we have
\begin{align*}
\left|\eta\right|^{p-2}\frac{\left|x-x_{\varepsilon}\right|^{q-2}}{\varepsilon^{q-1}}A^{2} & \leq C\left|\eta\right|^{p-2}\frac{\left|x-x_{\varepsilon}\right|^{q-2}}{\varepsilon^{q-1}}\left(\frac{\left|x-x_{\varepsilon}\right|^{2q}}{q^{2}\varepsilon^{2(q-1)}}+\frac{\left|t-t_{\varepsilon}\right|^{4}}{4\delta_{\varepsilon}^{2}}\right)\\
 & =C(q)\left|\eta\right|^{p-2}\frac{\left|x-x_{\varepsilon}\right|^{3q-2}}{q^{2}\varepsilon^{3(q-1)}}+\left|\eta\right|^{p-2}\frac{\left|x-x_{\varepsilon}\right|^{q-2}}{\varepsilon^{q-1}}\frac{\left|t-t_{\varepsilon}\right|^{4}}{4\delta_{\varepsilon}^{2}}.\\
 & =:T_{1}+T_{2}.
\end{align*}
Recall that by Lemma \ref{lem:inf conv error est}
\begin{equation}
\left|x-x_{\varepsilon}\right|\leq\omega_{x}^{1/q}(\left|x-x_{\varepsilon}\right|)\varepsilon^{\frac{q-1}{q}}\leq C_{0}(q,M)\varepsilon^{\frac{q-1}{q}}.\label{eq:error est 4}
\end{equation}
To estimate $T_{2}$ we use that $q>p/(p-1)$ implies $(q-1)(p-1)-1>0$.
We also use the estimate $\frac{\left|t-t_{\varepsilon}\right|^{2}}{\delta_{\varepsilon}}\leq\varepsilon^{q-1}$
from Lemma \ref{lem:inf conv error est}, and inequality (\ref{eq:error est 4}).
We obtain 
\begin{align*}
T_{2}=\frac{\left|\eta\right|^{p-2}\left|x-x_{\varepsilon}\right|^{q-2}}{4\varepsilon^{q-1}}\frac{\left|t-t_{\varepsilon}\right|^{4}}{\delta_{\varepsilon}^{2}} & =\frac{\left|x-x_{\varepsilon}\right|^{(q-1)(p-1)-1}}{4\varepsilon^{q-1}}\frac{\left|t-t_{\varepsilon}\right|^{4}}{\delta_{\varepsilon}^{2}}\\
 & \leq C(q,M)\frac{\varepsilon^{\frac{q-1}{q}((q-1)(p-1)-1)}\varepsilon^{2(q-1)}}{\varepsilon^{q-1}}\\
 & \leq C(q,M)\varepsilon^{q-1}.
\end{align*}
To estimate $T_{1}$, we define $h(s):=\omega_{x}^{1/2}(C_{0}^{2}s^{-1})$
like in the case $p\geq2$, where $C_{0}=C_{0}(q,M)$ is the constant
in (\ref{eq:error est 4}). Then using also both inequalities
in (\ref{eq:error est 4}) we have
\begin{align*}
\frac{\left|x-x_{\varepsilon}\right|^{3q-2}}{\varepsilon^{3(q-1)}} & =\left|\eta\right|^{2}h(\left|\eta\right|)\frac{\left|x-x_{\varepsilon}\right|^{-2(q-1)}}{\varepsilon^{-2(q-1)}}\frac{\left|x-x_{\varepsilon}\right|^{3q-2}}{\varepsilon^{3(q-1)}}\frac{1}{h(\frac{\left|x-x_{\varepsilon}\right|^{q-1}}{\varepsilon^{q-1}})}\\
 & =\left|\eta\right|^{2}h(\left|\eta\right|)\frac{\left|x-x_{\varepsilon}\right|^{q}}{\varepsilon^{q-1}}\frac{1}{h(\frac{\left|x-x_{\varepsilon}\right|^{q-1}}{\varepsilon^{q-1}})}\\
 & \leq q\left|\eta\right|^{2}h(\left|\eta\right|)\frac{\omega_{x}(\left|x-x_{\varepsilon}\right|)}{h(C_{0}\varepsilon^{\frac{(q-1)^{2}}{q}-(q-1)})}\\
 & \leq q\left|\eta\right|^{2}h(\left|\eta\right|)\frac{\omega_{x}(C_{0}\varepsilon^{\frac{q-1}{q}})}{h(C_{0}\varepsilon^{-\frac{q-1}{q}})}\\
 & =q\left|\eta\right|^{2}h(\left|\eta\right|)\frac{\omega_{x}(C_{0}\varepsilon^{\frac{q-1}{q}})}{\omega_{x}^{1/2}(C_{0}^{2}\frac{\varepsilon^{\frac{q-1}{q}}}{C_{0}})}\\
 & =q\left|\eta\right|^{2}h(\left|\eta\right|)\omega_{x}^{1/2}(C_{0}\varepsilon^{\frac{q-1}{q}}).
\end{align*}
Therefore, we obtain
\[
T_{1}=C(q)\left|\eta\right|^{p-2}\frac{\left|x-x_{\varepsilon}\right|^{3q-2}}{\varepsilon^{3(q-1)}}\leq C(q)\left|\eta\right|^{p}h(\left|\eta\right|)\omega_{x}^{1/2}(C_{0}\varepsilon^{\frac{q-1}{q}}).
\]
Combining the estimates of $T_{1}$ and $T_{2}$ with (\ref{eq:inf conv is super 3}),
we obtain
\begin{align*}
(p-1)\theta-\frac{1}{u_{\varepsilon}^{p-2}(x,t)}F(\eta,X) & \geq-E_{\varepsilon}(\left|\eta\right|^{p}h(\left|\eta\right|)+1),
\end{align*}
where we have denoted 
\begin{align*}
    E_{\varepsilon}:=C(N,p,q,m,M)\max(\omega_{x}^{1/2}(C_{0}\varepsilon^{\frac{q-1}{q}}),\varepsilon^{q-1}).
\end{align*}
This yields the desired estimate in the case $1<p<2$.

It remains to consider the case $\eta=0$. Now $x=x_{\varepsilon}$
and so taking $(z,\tau)=(x,t)$ in (\ref{eq:inf conv is super 1}), we
see that the mapping $\phi(y,s):=u(x,t_{\varepsilon})-\frac{\left|y-x\right|^{q}}{q\varepsilon^{q-1}}-\frac{\left|s-t\right|^{2}}{2\delta_{\varepsilon}}+\frac{\left|t_{\varepsilon}-t\right|^{2}}{2\delta_{\varepsilon}}$
touches $u$ from below at $(y,s)=(x,t_{\varepsilon})$. Hence, since
$u$ is a viscosity supersolution and $D\phi(y,s)\not=0$ when $y\not=x$,
we have
\[
\limsup_{(y,s)\rightarrow(x,t_{\varepsilon}), y\not=x}\left((p-1)\partial_{s}\phi(y,s)-u^{2-p}(y,s)\Delta_{p}\phi(y,s)\right)\geq0.
\]
On the other hand, since $q>p/(p-1)$ or $p>2$, we have $\Delta_{p}\phi(y,s)\rightarrow0$
as $y\rightarrow x$. Therefore, we obtain $0\leq(p-1)\partial_{s}\phi(x,t_{\varepsilon})=\frac{t-t_{\varepsilon}}{\delta_{\varepsilon}}=\theta.$
\end{proof}
Next, we use the semi-concavity of inf-convolution to show that in
the context of Lemma \ref{lem:inf conv is super}, the function $u_{\varepsilon}$
is also a weak supersolution with some error term.
\begin{lem}
\label{lem:inf conv is weak} Let $u$ be a uniformly continuous viscosity
supersolution to 
\[
(p-1)u^{p-2}\partial_{t}u-\Delta_{p}u\geq0\quad\text{in}\quad\Xi.
\]
Suppose that $0<m\leq u\leq M$ in $\Xi$ and let $u_{\varepsilon}$
be as in Lemma \ref{lem:inf conv error est}.  Then $u_{\varepsilon}$
is a weak supersolution to 
\begin{equation}
\partial_{t}u_{\varepsilon}^{p-1}-\Delta_{p}u_{\varepsilon}\geq G(Du_{\varepsilon})\quad\text{in}\quad\Xi_{\varepsilon},\label{eq:weak equation with error}
\end{equation}
where 
\begin{equation}
G(Du_{\varepsilon})=\begin{cases}
0 & \text{if }\left|Du_{\varepsilon}\right|=0,\\
-E_{\varepsilon}\left(\left|Du_{\varepsilon}\right|^{p}h(\left|Du_{\varepsilon}\right|)+\left|Du_{\varepsilon}\right|^{\max(0,p-2)}\right) & \text{if }\left|Du_{\varepsilon}\right|\not=0.
\end{cases}\label{eq:G}
\end{equation}
Here $E_{\varepsilon}$ and $h$ are as in Lemma \ref{lem:inf conv is super}.
\end{lem}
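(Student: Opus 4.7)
The plan is to lift the pointwise viscosity inequality of Lemma \ref{lem:inf conv is super} to the weak inequality \eqref{eq:weak equation with error} by exploiting the semi-concavity of $u_\varepsilon$.

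By Lemma \ref{lem:inf properties}(iii), the map $(x,t) \mapsto u_\varepsilon(x,t) - C|x|^{2} - t^{2}/\delta_{\varepsilon}$ is concave on $\Xi_{\varepsilon}$, so Alexandrov's theorem implies that $u_\varepsilon$ is twice differentiable in the classical sense at almost every $(x,t) \in \Xi_\varepsilon$. At any such point the triple $(\partial_t u_\varepsilon(x,t), Du_\varepsilon(x,t), D^2 u_\varepsilon(x,t))$ belongs to the subjet $\mathcal{P}^{2,-}u_\varepsilon(x,t)$, and Lemma \ref{lem:inf conv is super} together with $(p-1)u_\varepsilon^{p-2}\partial_t u_\varepsilon = \partial_t u_\varepsilon^{p-1}$ yields the pointwise a.e.\ inequality
\begin{equation*}
\partial_t u_\varepsilon^{p-1} - \Delta_p u_\varepsilon \geq G(Du_\varepsilon) \qquad \text{in } \Xi_\varepsilon.
\end{equation*}
On $\{Du_\varepsilon = 0\}$ the second assertion of Lemma \ref{lem:inf conv is super} supplies $\partial_t u_\varepsilon \geq 0$, the $p$-Laplacian vanishes by the continuous extension $|\xi|^{p-2}\xi \to 0$ as $\xi \to 0$, and $G(0) = 0$, so the same inequality persists.

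Next, test against an arbitrary non-negative $\varphi \in C_0^\infty(\Xi_\varepsilon)$ and integrate over $\Xi_\varepsilon$. Since $u_\varepsilon$ is locally Lipschitz in space (by semi-concavity) and in time (the time penalty of the inf-convolution is quadratic, so $\partial_t u_\varepsilon$ is bounded), the time term integrates by parts classically as $-\int u_\varepsilon^{p-1}\partial_t\varphi\,dz$. For the elliptic term, write formally $\Delta_p u_\varepsilon = A(Du_\varepsilon) : D^2 u_\varepsilon$ with $A(\xi) := D^2_\xi F(\xi)$, $F(\xi) = |\xi|^p/p$, a positive semi-definite matrix. By semi-concavity, the distributional Hessian of $u_\varepsilon(\cdot,t)$ decomposes as $D^2 u_\varepsilon = D^2_{\mathrm{ac}} u_\varepsilon + D^2_{\mathrm{sing}} u_\varepsilon$ with $D^2_{\mathrm{sing}} u_\varepsilon \leq 0$ in the matrix sense; combined with $A \geq 0$ this forces the singular part of $\operatorname{div}(|Du_\varepsilon|^{p-2}Du_\varepsilon)$ to be a non-positive measure, so for $\varphi \geq 0$,
\begin{equation*}
-\int_{\Xi_\varepsilon}|Du_\varepsilon|^{p-2}Du_\varepsilon \cdot D\varphi\,dz \;\leq\; \int_{\Xi_\varepsilon}\Delta_p u_\varepsilon\,\varphi\,dz,
\end{equation*}
where the right-hand side is interpreted via the Alexandrov Hessian. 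Combining this with the pointwise a.e.\ inequality yields \eqref{eq:weak equation with error}.

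The principal technical obstacle is the rigorous justification of this integration by parts when $1 < p < 2$, because then $\xi \mapsto |\xi|^{p-2}\xi$ is singular at the origin (so the BV chain rule underlying the singular-part decomposition fails near $\{Du_\varepsilon = 0\}$) and Lemma \ref{lem:inf conv is super} only yields the pointwise inequality for $\eta \neq 0$. Following the pattern of \cite{juutinenj12, parviainenv20, siltakoski21}, the remedy is to replace $\Delta_p$ by the regularization $\Delta_p^\sigma u := \operatorname{div}((|Du|^2+\sigma)^{(p-2)/2}Du)$, $\sigma > 0$: re-running the proof of Lemma \ref{lem:inf conv is super} with this smooth non-degenerate operator yields an analogous pointwise a.e.\ inequality with the same structural error (modulo $\sigma$-terms vanishing as $\sigma\downarrow 0$), classical integration by parts is then immediate, and passing $\sigma \downarrow 0$ via dominated convergence — legitimate because $Du_\varepsilon \in L^\infty_{loc}$ and $G(Du_\varepsilon)$ is locally integrable — recovers \eqref{eq:weak equation with error}.
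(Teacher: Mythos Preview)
Your overall strategy matches the paper's: use Lemma~\ref{lem:inf conv is super} to get a pointwise a.e.\ inequality via Alexandrov, then pass to the weak formulation using semi-concavity. The difference is in how you justify the spatial integration by parts, and here your argument has a genuine gap.

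You argue directly with the measure decomposition $D^2 u_\varepsilon = D^2_{\mathrm{ac}} u_\varepsilon + D^2_{\mathrm{sing}} u_\varepsilon$, claiming that the singular part of $\operatorname{div}(|Du_\varepsilon|^{p-2}Du_\varepsilon)$ inherits non-positivity from $A(Du_\varepsilon)\geq 0$ and $D^2_{\mathrm{sing}}u_\varepsilon\leq 0$. This presupposes a chain rule of the form $\operatorname{div}(V(Du_\varepsilon)) = DV(Du_\varepsilon):D^2u_\varepsilon$ at the level of measures, which is not standard: $Du_\varepsilon$ is only BV, and the Vol'pert-type chain rule does not produce this formula in general. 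Your fix for $1<p<2$ has the same problem --- after regularizing to $\Delta_p^\sigma$, the vector field $V_\sigma$ is smooth, but $u_\varepsilon$ is still only semi-concave, so ``classical integration by parts is then immediate'' is false. You also suggest re-running Lemma~\ref{lem:inf conv is super} for the regularized operator, but this is neither needed nor what is done: the pointwise inequality from Lemma~\ref{lem:inf conv is super} is used only at the very end, for the unregularized operator.

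The paper circumvents all of this by a double approximation. First, the concave function $\phi = u_\varepsilon - C(|x|^2+t^2)$ is approximated by \emph{smooth} concave $\phi_j$, giving smooth $u_{\varepsilon,j}$ with $D^2 u_{\varepsilon,j}\leq CI$. Second, the operator is regularized to $\Delta_{p,\delta}$. Now integration by parts is genuinely classical. One then passes $j\to\infty$ and $\delta\to 0$ by Fatou's lemma on the left and dominated convergence on the right; the uniform upper bound $D^2 u_{\varepsilon,j}\leq CI$ (and, after $j\to\infty$, the sharper bound $D^2 u_\varepsilon \leq (q-1)\varepsilon^{1-q}|Du_\varepsilon|^{\frac{q-2}{q-1}}I$ from Lemma~\ref{lem:inf properties}(iv)) furnishes the lower bounds on $-\Delta_{p,\delta}$ needed for Fatou. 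Only after both limits does one invoke Lemma~\ref{lem:inf conv is super} to conclude that the resulting integrand is non-negative a.e.
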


\begin{proof}
Fix non-negative $\varphi\in C_{0}^{\infty}(\Xi_{\varepsilon})$.
By Lemma \ref{lem:inf properties}, the function 
\[
\phi(x,t):=u_{\varepsilon}(x,t)-C(\left|x\right|^{2}+t^{2}),
\]
where $C=C(q, \eps)$, is concave in $\Xi_{\varepsilon}$. Therefore, we can approximate
it by smooth concave functions $\phi_{j}$ so that
\[
(\partial_{t}\phi_{j},D\phi_{j},D^{2}\phi_{j})\rightarrow(\partial_{t}\phi,D\phi,D^{2}\phi)\quad\text{a.e.\ in}\quad\Xi_{\varepsilon}.
\]
Above $D^2\phi$ denotes the Alexandrov derivative, see \cite{lindqvist25} or \cite[Section 6.4, pp.\ 242--245]{evansg92}.
We define 
\[
u_{\varepsilon,j}(x,t):=\phi_{j}(x,t)+C(\left|x\right|^{2}+t^{2}).
\]
We regularize the equation in order to remove the singularities, and to this end let $\delta\in(0,1)$. Then we compute using integration by parts
\begin{align*}
 & \int_{\Xi_{\varepsilon}}\varphi(p-1)u_{\varepsilon,j}^{p-2}\partial_{t}u_{\varepsilon,j}\d z\\
 & \hspace{1 em}+\int_{\Xi_{\varepsilon}}-\varphi(\delta+\left|Du_{\varepsilon,j}\right|^{2})^{\frac{p-2}{2}}\big(\Delta u_{\varepsilon,j}+\frac{p-2}{\delta+\left|Du_{\varepsilon,j}\right|^{2}}Du_{\varepsilon,j}^{\prime}D^{2}u_{\varepsilon,j}Du_{\varepsilon,j}\big)-\varphi G(Du_{\varepsilon,j})\d z\\
 & =\int_{\Xi_{\varepsilon}}\varphi\partial_{t}u_{\varepsilon,j}^{p-1}-\varphi\div\left((\delta+\left|Du_{\varepsilon,j}\right|^{2})^{\frac{p-2}{2}}Du_{\varepsilon,j}\right)-\varphi G(Du_{\varepsilon,j})\d z\\
 & =\int_{\Xi_{\varepsilon}}-u_{\varepsilon,j}^{p-1}\partial_{t}\varphi+(\delta+\left|Du_{\varepsilon,j}\right|^{2})^{\frac{p-2}{2}}Du_{\varepsilon,j}\cdot D\varphi-\varphi G(Du_{\varepsilon,j})\d z.
\end{align*}
We denote 
\[
\Delta_{p,\delta}u_{\varepsilon,j}:=(\delta+\left|Du_{\varepsilon,j}\right|^{2})^{\frac{p-2}{2}}\big(\Delta u_{\varepsilon,j}+\frac{p-2}{\delta+\left|Du_{\varepsilon,j}\right|^{2}}(Du_{\varepsilon,j})^{\prime}D^{2}u_{\varepsilon,j}Du_{\varepsilon,j}\big),
\]
so that by the previous computation we have
\begin{align*}
 & \liminf_{j\rightarrow\infty}\int_{\Xi_{\varepsilon}}\varphi\big((p-1)u_{\varepsilon,j}^{p-2}\partial_{t}u_{\varepsilon,j}-\Delta_{p,\delta}u_{\varepsilon,j}-G(Du_{\varepsilon,j})\big)\d z\\
 &=\lim_{j\rightarrow\infty}\int_{\Xi_{\varepsilon}}-u_{\varepsilon,j}^{p-1}\partial_{t}\varphi+(\delta+\left|Du_{\varepsilon,j}\right|^{2})^{\frac{p-2}{2}}Du_{\varepsilon,j}\cdot D\varphi-\varphi G(Du_{\varepsilon,j})\d z.
\end{align*}
We use Fatou's lemma at the left-hand side and dominated convergence
at the right-hand side. This way, we arrive at
\begin{align}\label{eq:aux}
 & \int_{\Xi_{\varepsilon}}\varphi\big((p-1)u_{\varepsilon}^{p-2}\partial_{t}u_{\varepsilon}-\Delta_{p,\delta}u_{\varepsilon}-G(Du_{\varepsilon})\big)\d z \nonumber \\
 & \leq\int_{\Xi_{\varepsilon}}-u_{\varepsilon}^{p-1}\partial_{t}\varphi+(\delta+\left|Du_{\varepsilon}\right|^{2})^{\frac{p-2}{2}}Du_{\varepsilon}\cdot D\varphi-\varphi G(Du_{\varepsilon})\d z.
\end{align}
To justify the use of Fatou's lemma, observe that by the concavity of
$\phi_{j}$ we have $D^{2}u_{\varepsilon,j}\leq CI$.
Thus the integral at the left-hand side of \eqref{eq:aux} has a lower bound when
$Du_{\varepsilon,j}=0$. When $Du_{\varepsilon,j}\not=0$, we have
\begin{align*}
-\Delta_{p,\delta}u_{\varepsilon,j} & =-\left(\delta+\left|Du_{\varepsilon,j}\right|^{2}\right)^{\frac{p-2}{2}}\left(\Delta u_{\varepsilon,j}+\frac{p-2}{\delta+\left|Du_{\varepsilon,j}\right|^{2}}Du_{\varepsilon,j}^{\prime}D^{2}u_{\varepsilon,j}D^{2}u_{\varepsilon,j}\right)\\
 & =-\frac{\left(\delta+\left|Du_{\varepsilon,j}\right|^{2}\right)^{\frac{p-2}{2}}}{\delta+\left|Du_{\varepsilon,j}\right|^{2}}\left(\left|Du_{\varepsilon,j}\right|^{2}\left(\Delta u_{\varepsilon,j}+\frac{p-2}{\left|Du_{\varepsilon,j}\right|^{2}}Du_{\varepsilon,j}^{\prime}D^{2}u_{\varepsilon,j}Du_{\varepsilon,j}\right)+\delta\Delta u_{\varepsilon,j}\right)\\
 & \geq-\frac{\left(\delta+\left|Du_{\varepsilon,j}\right|^{2}\right)^{\frac{p-2}{2}}}{\delta+\left|Du_{\varepsilon,j}\right|^{2}}C(\left|Du_{\varepsilon,j}\right|^{2}(N+p-2)+\delta N)\\
 & \geq-C\frac{\left(\delta+\left|Du_{\varepsilon,j}\right|^{2}\right)^{\frac{p-2}{2}}}{\delta+\left|Du_{\varepsilon,j}\right|^{2}}(\left|Du_{\varepsilon,j}\right|^{2}+\delta)\max(N,N+p-2)\\
 & =-C(\delta+\left|Du_{\varepsilon,j}\right|^{2})^{\frac{p-2}{2}}\max(N,N+p-2),
\end{align*}
where the last member is bounded independently of $j$ since $Du_{\varepsilon}$
is bounded. Therefore, our use of Fatou's lemma is justified.
Next, we let $\delta\rightarrow0$. By the dominated convergence theorem we have
\begin{align*}
 & \lim\inf_{\delta\rightarrow0}\int_{\Xi_{\varepsilon}}-u_{\varepsilon}^{p-1}\partial_{t}\varphi+(\delta+\left|Du_{\varepsilon}\right|^{2})^{\frac{p-2}{2}}Du_{\varepsilon}\cdot D\varphi-\varphi G(Du_{\varepsilon})\d z\\
 & =\int_{\Xi_{\varepsilon}}-u_{\varepsilon}^{p-1}\partial_{t}\varphi+\left|Du_{\varepsilon}\right|^{p-2}Du_{\varepsilon}\cdot D\varphi-\varphi G(Du_{\varepsilon})\d z,
\end{align*}
where the term $\left|Du_{\varepsilon}\right|^{p-2}Du_{\varepsilon}$
is read as zero wherever $Du_{\varepsilon}=0$. On the other hand,
by Fatou's lemma we have
\begin{align*}
 & \liminf_{\delta\rightarrow0}\int_{\Xi_{\varepsilon}}\left((p-1)u_{\varepsilon}^{p-2}\partial_{t}u_{\varepsilon}-\Delta_{p,\delta}u_{\varepsilon}-G(Du_{\varepsilon})\right)\varphi\d z\\
 & \geq\int_{\Xi_{\varepsilon}}\liminf_{\delta\rightarrow0}\left((p-1)u_{\varepsilon}^{p-2}\partial_{t}u_{\varepsilon}-\Delta_{p,\delta}u_{\varepsilon}-G(Du_{\varepsilon})\right)\varphi\d z\\
 & =\int_{\Xi_{\varepsilon}\cap\left\{ Du_{\varepsilon}\not=0\right\} }\liminf_{\delta\rightarrow0}\left((p-1)u_{\varepsilon}^{p-2}\partial_{t}u_{\varepsilon}-\Delta_{p,\delta}u_{\varepsilon}-G(Du_{\varepsilon})\right)\varphi\d z\\
 & \ \ \ +\int_{\Xi_{\varepsilon}\cap\left\{ Du_{\varepsilon}=0\right\} }\liminf_{\delta\rightarrow0}\left((p-1)u_{\varepsilon}^{p-2}\partial_{t}u_{\varepsilon}\right)\varphi\d z\\
 & \geq0.
\end{align*}
where the last estimate follows from Lemma \ref{lem:inf conv is super}. The lemma could be applied since $u_\varepsilon$ is twice differentiable almost everywhere by Alexandrov's theorem, and so for almost every $(x,t)$ in the support of $\varphi$ we have $(\partial_t u_\varepsilon (x,t), Du_\varepsilon (x,t),  D^2 u_\varepsilon (x,t))\in \mathcal P^{2,-} u(x,t)$.
We still need to justify our use of Fatou's lemma. When $Du_{\varepsilon}=0$,
this follows directly from the inequality 
\[
D^{2}u_{\varepsilon}\leq\frac{q-1}{\varepsilon}\left|Du_{\varepsilon}\right|^{\frac{q-2}{q-1}}I,
\]
which holds by Lemma \ref{lem:inf properties}. When $Du_{\varepsilon}\not=0$, we estimate
\begin{align*}
-\Delta_{p,\delta}u_{\varepsilon} & =-\left(\delta+\left|Du_{\varepsilon}\right|^{2}\right)^{\frac{p-2}{2}}\left(\Delta u_{\varepsilon}+\frac{p-2}{\delta+\left|Du_{\varepsilon}\right|^{2}}Du_{\varepsilon}^{\prime}D^{2}u_{\varepsilon}Du_{\varepsilon}\right)\\
 & =-\frac{(\delta+\left|Du_{\varepsilon}\right|^{2})^{\frac{p-2}{2}}}{\delta+\left|Du_{\varepsilon}\right|^{2}}\left(\left|Du_{\varepsilon}\right|^{2}\left(\Delta u_{\varepsilon}+\frac{p-2}{\left|Du_{\varepsilon}\right|}Du_{\varepsilon}^{\prime}D^{2}u_{\varepsilon}Du_{\varepsilon}\right)+\delta\Delta u_{\varepsilon}\right)\\
 & \geq-\frac{(\delta+\left|Du_{\varepsilon}\right|^{2})^{\frac{p-2}{2}}}{\delta+\left|Du_{\varepsilon}\right|^{2}}\frac{q-1}{\varepsilon}\left(\left|Du_{\varepsilon}\right|^{\frac{q-2}{q-1}+2}(N+p-2)+\left|Du_{\varepsilon}\right|^{\frac{q-2}{q-1}}\delta N\right)\\
 & \geq-\frac{(\delta+\left|Du_{\varepsilon}\right|^{2})^{\frac{p-2}{2}}}{\delta+\left|Du_{\varepsilon}\right|^{2}}\frac{q-1}{\varepsilon}\left((\left|Du_{\varepsilon}\right|^{2}+\delta)\left|Du_{\varepsilon}\right|^{\frac{q-2}{q-1}+2}\max(N,N+p-2)\right)\\
 & \geq-(\delta+\left|Du_{\varepsilon}\right|^{2})^{\frac{p-2}{2}}\left|Du_{\varepsilon}\right|^{\frac{q-2}{q-1}}\frac{(q-1)}{\varepsilon}\max(N,N+p-2)\\
 & \geq-(\delta+\left|Du_{\varepsilon}\right|^{2})^{\frac{1}{2}(p-2+\frac{q-2}{q-1})}\frac{(q-1)}{\varepsilon}\max(N,N+p-2),
\end{align*}
which is bounded independently of $\delta$ since $p-2+\frac{q-2}{q-1}>0$.
\end{proof}

\subsection{Local energy estimates and convergence of $Du_{\varepsilon}$ in
$L_{loc}^{r}$.}

\label{sec:energy-estimates}

Next we establish an energy estimate for weak supersolutions to the
equation (\ref{eq:weak equation with error}). Observe that the growth
of the first-order term $\left|\eta\right|^{p}h(\eta)$ may be stronger
than $\left|\eta\right|^{q}$ for any $q<p$. As in the proof of an
energy estimate one usually tests with the solution itself, the term
$u\left|Du\right|^{p}h(Du)$ has to be dealt with. This could be difficult
if we only knew that $u$ is in some Sobolev space. However, since
we are dealing with locally bounded supersolutions, this is not a
problem for us.

If $u_{\varepsilon}$ is the inf-convolution of a viscosity supersolution
to (\ref{eq:weak equation with error}), then the energy estimate
together with Lemma \ref{lem:inf conv is weak} guarantees that $Du_{\varepsilon}$
converges weakly in $L_{loc}^{p}(\Xi)$ as $\varepsilon\rightarrow0$
up to a subsequence. However, eventually we need to pass to the limit
under the integral sign of
\[
\int_{\Xi}-u_{\varepsilon}^{p-1}\partial_{t}\varphi+\left|Du_{\varepsilon}\right|^{p-2}Du_{\varepsilon}\cdot D\varphi+\varphi G(Du_{\varepsilon})\d z,
\]
where $\varphi$ is an arbitrary fixed test function and $G$ is as
in Lemma \ref{lem:inf conv is weak}. For this we need stronger convergence,
which we shall establish in Lemma \ref{lem:strong conv}. Since we apply this lemma to 
$\inf$-convolutions that are locally Lipschitz, the Lipschitz assumption in the next lemma does not impose any additional restriction for our purposes.
\begin{lem}[Caccioppoli's estimate]
\label{lem:caccioppoli} Assume that $u$ is a locally Lipschitz
continuous weak supersolution to 
\[
\partial_{t}u^{p-1}-\Delta_{p}u\geq G(Du)\quad\text{in \ensuremath{\Xi}, }
\]
where $G$ is defined by the formula (\ref{eq:G}). % for some number
%$E_{\varepsilon}\in(0,1]$ and continuous, decreasing function $h:[0,\infty)\rightarrow[0,\infty)$
%such that $h(s)\rightarrow0$ as $s\rightarrow\infty$. 
Suppose moreover
that $0\leq u\leq M$ in $\Xi$. Then
\[
\int_{\Xi}\left|Du\right|^{p}\xi^{p}\d z\leq C(p,M,h)\int_{\Xi}\left|\partial_{t}\xi^{p}\right|+\left|D\xi\right|^{p}+\xi^{p}\d z
\]
for any cut-off function $\xi\in C_{0}^{\infty}(\Xi),\ \xi\ge 0.$
\end{lem}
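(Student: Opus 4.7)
The plan is to test the weak supersolution inequality with the nonnegative Lipschitz function $\varphi=(M-u)\xi^p$, which is compactly supported in $\Xi$ and admissible by a standard approximation argument, since $u$ is locally Lipschitz. The identity $u^{p-1}\partial_t u=\frac{1}{p}\partial_t(u^p)$ holds a.e.\ and can be invoked in the weak formulation via Steklov averaging. Expanding $\partial_t\varphi=-(\partial_t u)\xi^p+(M-u)\partial_t\xi^p$ and $D\varphi=-(Du)\xi^p+p(M-u)\xi^{p-1}D\xi$ and integrating by parts in time, the time contribution collapses to $-\frac{1}{p}\int u^p\partial_t\xi^p-\int u^{p-1}(M-u)\partial_t\xi^p$, which is bounded by $C(p,M)\int|\partial_t\xi^p|$. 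The spatial part produces the desired negative of $\int|Du|^p\xi^p$ together with a cross term that is handled by Young's inequality, $p(M-u)\xi^{p-1}|Du|^{p-1}|D\xi|\le\frac{1}{8}|Du|^p\xi^p+C(p,M)|D\xi|^p$.

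After rearranging, the only nontrivial remaining term on the right is $-\int G(Du)(M-u)\xi^p$, which is \emph{nonnegative} because $G\le 0$ and $M-u\ge 0$; this is the main obstacle. Using $E_\varepsilon\le 1$ and $M-u\le M$, the definition (\ref{eq:G}) bounds it by $M\int\xi^p|Du|^p h(|Du|)+M\int\xi^p|Du|^{\max(0,p-2)}$. The first integral is superlinear in $|Du|^p$, so the hypothesis $h(s)\to 0$ as $s\to\infty$ is essential: choose $R=R(M,h)$ with $Mh(R)\le\frac{1}{8}$ and split at $|Du|=R$. The low-gradient region is dominated by $Mh(0)R^p\int\xi^p$, and the high-gradient region contributes at most $\frac{1}{8}\int|Du|^p\xi^p$, which is absorbed. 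The second integral equals $M\int\xi^p$ when $p\le 2$; when $p>2$, Young's inequality $|Du|^{p-2}\le\delta|Du|^p+C(\delta,p)$ with $\delta=\frac{1}{8M}$ again yields an absorbable $\frac{1}{8}\int|Du|^p\xi^p$ plus a term $C(p,M)\int\xi^p$.

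Summing the estimates, the $|Du|^p\xi^p$ terms on the right have total coefficient at most $\frac{3}{8}$ and can be absorbed into the left-hand side, producing the claimed inequality with constant $C=C(p,M,h)$. The dependence on $h$ enters only through the choice of the threshold $R$; the entire argument rests on being able to dominate the superlinear first-order error $|Du|^p h(|Du|)$, and the decay of $h$ at infinity is precisely what makes this possible.
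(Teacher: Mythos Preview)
Your proof is correct and follows essentially the same route as the paper: test with $\varphi=(M-u)\xi^{p}$, handle the time term by integration by parts, the cross term by Young's inequality, and the error term $-G(Du)\varphi$ by splitting at a threshold $R$ where $h$ becomes small enough to absorb the superlinear piece $|Du|^{p}h(|Du|)$, with the $|Du|^{\max(p-2,0)}$ term treated by Young when $p>2$. The only cosmetic differences are your absorption constants ($\tfrac{1}{8}$ versus the paper's $\tfrac{1}{4}$) and your mention of Steklov averaging, which is unnecessary here since local Lipschitz continuity in space and time already makes $\varphi$ admissible with a.e.\ time derivative.
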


\begin{proof}
Define $\varphi:=\xi^{p}(M-u)\ge 0.$ By local Lipschitz continuity (both in space and time) $\varphi$
is an admissible test function whose time derivative exist a.e., and we obtain using the definition of
$G$
\begin{align}
\int_{\Xi}\xi^{p}\left|Du\right|^{p}\d z & \leq\int_{\Xi}u^{p-1}\partial_{t}\varphi+p\xi^{p-1}(M-u)\left|Du\right|^{p-1}\left|D\xi\right|\d z\nonumber \\
 & \ \ \ +\int_{\Xi\cap\left\{ Du\not=0\right\} }\varphi E_{\varepsilon}(\left|Du\right|^{p}h(\left|Du\right|)+\left|Du\right|^{\max(p-2,0)})\d z.\label{eq:energy est 1}
\end{align}
We proceed to estimate the terms on the right-hand side. We have by
integration by parts
\begin{align}
\int_{\Xi}u^{p-1}\partial_{t}((M-u)\xi^{p})\d z & =\int_{\Xi}-\frac{1}{p}\partial_{t}u^{p}\xi^{p}+u^{p-1}(M-u)\partial_{t}\xi^{p}\d z\nonumber\\
 & =\int_{\Xi}\frac{1}{p}u^{p}\partial_{t}\xi^{p}+u^{p-1}(M-u)\partial_{t}\xi^{p}\d z\nonumber \\
 & \leq\int_{\Xi}C(p)M^{p}\left|\partial_{t}\xi^{p}\right|\d z.\label{eq:energy est 2}
\end{align}
Using Young's inequality we obtain
\begin{align}
\int_{\Xi}p\xi^{p-1}(M-u)\left|Du\right|^{p-1}\left|D\xi\right|\d z\leq & \int_{\Xi}\frac{1}{4}\xi^{p}\left|Du\right|^{p}\d z+C(p)\int_{\Xi}M^{p}\left|D\xi\right|^{p}\d z.\label{eq:energy est 3}
\end{align}
Further, if $p>2$, using Young's inequality with the exponents $p/(p-2)$
and $p/2$, we get 
\begin{align}
\int_{\Xi\cap\left\{ Du_{\varepsilon}\not=0\right\} }\varphi E_{\varepsilon}\left|Du\right|^{\max(p-2,0)}\d z & =\int_{\Xi}E_{\varepsilon}\xi^{p-2}\left|Du\right|^{p-2}\xi^{2}(M-u)\d z\nonumber \\
 & \leq\int_{\Xi}\frac{1}{4}\xi^{p}\left|Du\right|^{p}\d z+\int_{\Xi}\xi^{p}E_{\varepsilon}^{p/2}M^{p/2}\d z.\label{eq:energy est 4}
\end{align}
Finally, since $h(s)\rightarrow0$ as $s\rightarrow \infty$, there exists
$S>0$ such that $h(s)\leq\frac{1}{4M}$ when $s>S$. Using this,
we estimate
\begin{align}
\int_{\Xi}E_{\varepsilon}\varphi\left|Du\right|^{p}h(\left|Du\right|)\d z & \leq\int_{\Xi\cap\left\{ \left|Du\right|\leq S\right\} }E_{\varepsilon}M\left|Du\right|^{p}h(0)\xi^{p}\d z\nonumber \\
 & \ \ \ +\int_{\Xi\cap\left\{ \left|Du\right|>S\right\} }E_{\varepsilon}M\left|Du\right|^{p}\frac{1}{4M}\xi^{p}\d z\nonumber \\
 & \leq\int_{\Xi\cap\left\{ \left|Du\right|\leq S\right\} }E_{\varepsilon}MS^{p}h(0)\xi^{p}\d z+\int_{\Xi\cap\left\{ \left|Du\right|>S\right\} }\frac{1}{4}\left|Du\right|^{p}\xi^{p}\d z.\label{eq:energy est 5}
\end{align}
Combining the estimates (\ref{eq:energy est 2})-(\ref{eq:energy est 5})
with (\ref{eq:energy est 1}) and absorbing the term with $\frac{1}{4}\left|Du\right|^{p}$
to the left-hand side, we arrive at
\begin{align*}
\int_{\Xi}\frac{1}{2}\left|Du\right|^{p}\xi^{p}\d z & \leq\int_{\Xi}C(p)M^{p}\left|\partial_{t}\xi^{p}\right|+C(p)M^{p}\left|D\xi\right|^{p}+E_{\varepsilon}MS^{p}h(0)\xi^{p}\d z\\
 & \ \ \ +\begin{cases}
\int_{\Xi}\xi^{p}ME_{\varepsilon}\d z, & 1<p\leq2,\\
\int_{\Xi}\frac{1}{4}\xi^{p}\left|Du\right|^{p}+\xi^{p}E_{\varepsilon}^{p/2}M^{p/2}\d z, & p>2.
\end{cases}
\end{align*}
In the case $p>2$ we absorb the additional term with $\frac{1}{4}\left|Du\right|^{p}$
to the left-hand side. The claimed estimate follows.
\end{proof}

\begin{lem}
\label{lem:strong conv} Let $(u_{l})$ be a sequence of locally Lipschitz
continuous weak supersolutions to
\[
\partial_{t}u^{p-1}-\Delta_{p}u\geq G(Du)\quad\text{in}\quad\Xi
\]
where $G$ is defined by the formula (\ref{eq:G}). % for some number
%$E_{\varepsilon}\in(0,1]$ and continuous, decreasing function $h:[0,\infty)\rightarrow[0,\infty)$
%such that $h(s)\rightarrow0$ as $s\rightarrow\infty$. 
Suppose that
$u_{l}\rightarrow u$ pointwise almost everywhere in $\Xi$. Assume moreover that for
all $l\in\mathbb{N}$ we have
\[
0<m\leq u_{l} \leq M<\infty\quad\text{in}\quad\Xi.
\]
 Then $(Du_{l})$ is a Cauchy sequence in $L_{loc}^{r}(\Xi)$ for
any $1<r<p$.
\end{lem}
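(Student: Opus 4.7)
The plan is to combine Caccioppoli's estimate (Lemma~\ref{lem:caccioppoli}), applied uniformly in $l$, with a monotonicity argument based on the classical vector inequalities for the $p$-Laplace operator.

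\emph{Weak compactness.} Applying Lemma~\ref{lem:caccioppoli} with a non-negative cutoff $\xi\in C_{0}^{\infty}(\Xi)$ to each $u_{l}$ gives a bound on $\int|Du_{l}|^{p}\xi^{p}\d z$ whose right-hand side is uniform in $l$; the uniformity relies on $E_{\varepsilon}\le 1$, $u_{l}\le M$, and the fixed $h$. Combined with the pointwise convergence $u_{l}\to u$ and the uniform $L^{\infty}$-bound, dominated convergence yields $u_{l}\to u$ in every $L^{q}_{loc}(\Xi)$, $q<\infty$, and along a subsequence $Du_{l}\rightharpoonup Du$ weakly in $L^{p}_{loc}(\Xi)$.

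\emph{Monotonicity reduction.} Set
\[
J_{l,k}(\xi):=\int_{\Xi}\bigl(|Du_{l}|^{p-2}Du_{l}-|Du_{k}|^{p-2}Du_{k}\bigr)\cdot(Du_{l}-Du_{k})\,\xi^{p}\d z\ge 0.
\]
For $p\ge 2$ the pointwise estimate $|a-b|^{p}\le C_{p}(|a|^{p-2}a-|b|^{p-2}b)\cdot(a-b)$ reduces the desired $L^{r}_{loc}$-convergence (in fact $L^{p}_{loc}$-convergence) to showing $J_{l,k}(\xi)\to 0$. For $1<p<2$ the weighted estimate $|a-b|^{2}\le C_{p}[(|a|^{p-2}a-|b|^{p-2}b)\cdot(a-b)](|a|+|b|)^{2-p}$, combined with H\"older with exponents $\tfrac{2}{r}$ and $\tfrac{2}{2-r}$ (the second factor is finite exactly when $r<p$, explaining the range in the statement) and the uniform $L^{p}_{loc}$-bound on $Du_{l}$, again reduces the claim to $J_{l,k}(\xi)\to 0$ as $l,k\to\infty$.

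\emph{Estimating $J_{l,k}$.} Test the weak supersolution inequalities for $u_{l}$ and $u_{k}$ against suitable non-negative combinations of $\xi^{p}(u_{l}-u_{k})^{\pm}$ (or their truncations $T_{\eta}((u_{l}-u_{k})^{\pm})$, $\eta\downarrow 0$) and add the resulting inequalities. Using $D(u_{l}-u_{k})^{\pm}=\pm\chi_{\{\pm(u_{l}-u_{k})>0\}}D(u_{l}-u_{k})$ the principal part of the combined inequality can be identified with $J_{l,k}(\xi)$ modulo three classes of remainders: (i) cutoff-boundary terms of the form $\int|u_{l}-u_{k}|(|Du_{l}|^{p-1}+|Du_{k}|^{p-1})|D\xi^{p}|\,\d z$, which vanish by H\"older and $u_{l}\to u$ in $L^{p}_{loc}$; (ii) error-term remainders bounded by $\int\xi^{p}|u_{l}-u_{k}|(|G(Du_{l})|+|G(Du_{k})|)\,\d z$, which vanish because $|Du_{l}|^{p}h(|Du_{l}|)+|Du_{l}|^{\max(p-2,0)}$ is uniformly in $L^{1}_{loc}$ (by Caccioppoli, together with $h\le h(0)$) while $|u_{l}-u_{k}|\to 0$ in $L^{\infty}$-duality; and (iii) time-derivative contributions.

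\emph{Main obstacle.} The genuinely delicate point is the time-derivative contribution, because $(u_{l}-u_{k})^{\pm}$ is not smooth enough in time to be an admissible test function and, since we only have one-sided supersolution inequalities, the standard subtraction available for solutions is unavailable. Both issues are resolved by a Steklov-averaging regularization $[u]_{h}(x,t):=h^{-1}\int_{t}^{t+h}u(x,s)\,\d s$: perform all integrations by parts at the mollified level, then pass $h\to 0$ using the local Lipschitz regularity of $u_{l}$ and $u_{k}$. After the four tested inequalities are combined, the time-derivative pieces either cancel pairwise or appear with a favourable sign (produced by the convexity of the primitive of $s\mapsto s^{p-1}$) and may be discarded. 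This yields $J_{l,k}(\xi)\to 0$ and, via the monotonicity reduction, the Cauchy property in $L^{r}_{loc}(\Xi)$ for every $1<r<p$.
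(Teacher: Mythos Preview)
Your outline has a genuine gap in the step ``Estimating $J_{l,k}$''. Because both $u_{l}$ and $u_{k}$ are only \emph{supersolutions}, testing with non-negative functions of the form $\xi^{p}(u_{l}-u_{k})^{\pm}$ (or their truncations) cannot produce an upper bound on the full monotonicity quantity $J_{l,k}$. Concretely: to bound $\int_{\{u_{l}>u_{k}\}}|Du_{l}|^{p-2}Du_{l}\cdot(Du_{l}-Du_{k})\xi^{p}$ from above you would need to test the $u_{l}$-inequality with a non-negative function supported on $\{u_{l}>u_{k}\}$ whose gradient is $-(Du_{l}-Du_{k})$ there; no such function exists among $(u_{l}-u_{k})^{\pm}$. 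The two tests that \emph{do} go the right way (test $u_{l}$ with $(u_{k}-u_{l})^{+}\xi^{p}$ and $u_{k}$ with $(u_{l}-u_{k})^{+}\xi^{p}$) control only the cross terms and leave the diagonal pieces of $J_{l,k}$ uncontrolled, so nothing forces $J_{l,k}\to 0$.

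Second, the time-derivative handling does not close. The equation carries $\partial_{t}u^{p-1}$, so pairing it with $(u_{l}-u_{k})^{\pm}$ produces $\partial_{t}(u_{l}^{p-1})(u_{k}-u_{l})^{+}$ and $\partial_{t}(u_{k}^{p-1})(u_{l}-u_{k})^{+}$, which neither cancel nor have a sign; the convexity of $s\mapsto s^{p}/p$ does not yield the ``favourable sign'' you invoke. The paper fixes both issues simultaneously by testing $u_{j}$ with $(\delta-w_{jk})\zeta$ and $u_{k}$ with $(\delta+w_{jk})\zeta$, where $w_{jk}$ is the truncation at level $\delta$ of $u_{j}^{p-1}-u_{k}^{p-1}$: the $\delta$-shift keeps the test functions non-negative while $D(\delta\mp w_{jk})=\mp Dw_{jk}$ gives the correct sign for the elliptic part on all of $\{|u_{j}^{p-1}-u_{k}^{p-1}|<\delta\}$; and because the test is built from $u^{p-1}$, the combined time term becomes the perfect derivative $\tfrac{1}{2}\partial_{t}(w_{jk}^{2})$, bounded by $C(\zeta)\delta^{2}$. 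The complementary set $\{|u_{j}^{p-1}-u_{k}^{p-1}|\ge\delta\}$ is then handled by Chebyshev and the uniform Caccioppoli bound, and one concludes by choosing $\delta$ small first and then $j,k$ large. This truncation-and-shift device (and the use of $u^{p-1}$ rather than $u$) is the missing idea.
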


\begin{proof}
Let $U\Subset\Xi$ and take a cut-off function $\zeta\in C_{0}^{\infty}(\Xi)$
such that $0\leq\zeta\leq1$ and $\zeta\equiv1$ in $U.$ For $\delta>0,$
set
\[
w_{jk}:=\begin{cases}
\delta, & u_{j}^{p-1}-u_{k}^{p-1}>\delta.\\
u_{j}^{p-1}-u_{k}^{p-1}, & -\delta<u_{j}^{p-1}-u_{k}^{p-1}<\delta,\\
-\delta, & u_{j}^{p-1}-u_{k}^{p-1}<-\delta.
\end{cases}
\]
Then $(\delta-w_{jk})\zeta\ge 0$ is an admissible test function by local
Lipschitz continuity and the fact that $w_{jk}\in[-\delta,\delta]$. Moreover, the test function has time derivatives almost everywhere. Using it as a test function for $u_{j}$, we
obtain
\begin{align}
0 & \leq\int_{\Xi}-u_{j}^{p-1}\partial_{t}\big( (\delta-w_{jk})\zeta\big)+\left|Du_{j}\right|^{p-2}Du_{j}\cdot D((\delta-w_{jk})\zeta)\d z\nonumber \\
 & \ \ \ +\int_{\Xi\cap\left\{ Du_{j}\not=0\right\} }-E_{\varepsilon}(\delta-w_{jk})\zeta\big(\left|Du_{j}\right|^{p}h(\left|Du_{j}\right|)+\left|Du_{j}\right|^{\max(0,p-2)}\big)\d z\nonumber \\
 & =\int_{\Xi}-\zeta\left|Du_{j}\right|^{p-2}Du_{j}\cdot Dw_{jk}+(\delta-w_{jk})\left|Du_{j}\right|^{p-2}Du_{j}\cdot D\zeta\d z\nonumber \\
 & \ \ \ +\int_{\Xi\cap\left\{ Du_{j}\not=0\right\} }-E_{\varepsilon}(\delta-w_{jk})\zeta\big(\left|Du_{j}\right|^{p}h(\left|Du_{j}\right|)+\left|Du_j\right|^{\max(0,p-2)}\big)\d z\nonumber \\
 & \ \ \ +\int_{\Xi}\zeta u_{j}^{p-1}\partial_{t}w_{jk}-u_{j}^{p-1}(\delta-w_{jk})\partial_{t}\zeta\d z.\label{eq:strong conv 0}
\end{align}
Then, since
\[
Dw_{jk}=(p-1)\chi_{\left\{ \left|u_{j}^{p-1}-u_{k}^{p-1}\right|<\delta\right\} }(u_{j}^{p-2}Du_{j}-u_{k}^{p-2}Du_{k}),
\]
and $\left|w_{jk}\right|\leq\delta$, it follows from (\ref{eq:strong conv 0})
that
\begin{align*}
 & \int_{\Xi\cap\left\{ \left|u_{j}^{p-1}-u_{k}^{p-1}\right|<\delta\right\} }\zeta(p-1)\left|Du_{j}\right|^{p-2}Du_{j}\cdot(u_{j}^{p-2}Du_{j}-u_{k}^{p-2}Du_{k})\d z\\
 & \leq\int_{\Xi\cap\left\{ Du_{j}\not=0\right\} }2\delta\left|Du_{j}\right|^{p-1}\left|D\zeta\right|+2\delta E_{\varepsilon}\zeta(\left|Du_{j}\right|^{p}h(\left|Du_{j}\right|)+\left|Du_{j}\right|^{\max(0,p-2)})\d z\\
 & \ \ \ +\int_{\Xi}\zeta u_{j}^{p-1}\partial_{t}w_{jk}+2\delta u_{j}^{p-1}\left|\partial_{t}\zeta\right|\d z.
\end{align*}
Since $u_{k}$ is a weak supersolution, the same arguments but
testing now with $(\delta+w_{jk})\zeta\ge 0$ yields
\begin{align*}
 & \int_{\Xi\cap\left\{ \left|u_{j}^{p-1}-u_{k}^{p-1}\right|<\delta\right\} }-\zeta(p-1)\left|Du_{k}\right|^{p-2}Du_{k}\cdot(u_{j}^{p-2}Du_{j}-u_{k}^{p-2}Du_{k})\d z\\
 & \le\int_{\Xi\cap\left\{ Du_{k}\not=0\right\} }2\delta\left|Du_{k}\right|^{p-1}\left|D\zeta\right|+2\delta E_{\varepsilon}\zeta(\left|Du_{k}\right|^{p}h(\left|Du_{k}\right|)+\left|Du_{k}\right|^{\max(0,p-2)})\d z\\
 & \ \ \ +\int_{\Xi}-\zeta u_{k}^{p-1}\partial_{t}w_{jk}+2\delta u_{k}^{p-1}\left|\partial_{t}\zeta\right|\d z.
\end{align*}
The idea of choosing the above test functions is to get, combining the last two inequalities, the following estimate where the first term contains a useful difference
\begin{align*}
 & (p-1)\int_{\Xi\cap\left\{ \left|u_{j}^{p-1}-u_{k}^{p-1}\right|<\delta\right\} }\zeta\big(\left|Du_{k}\right|^{p-2}Du_{k}-\left|Du_{j}\right|^{p-2}Du_{j}\big)\cdot(u_{j}^{p-2}Du_{j}-u_{k}^{p-2}Du_{k})\d z\\
 & \leq\int_{\Xi}\zeta(u_{j}^{p-1}-u_{k}^{p-1})\partial_{t}w_{jk}\d z\\
 & \ \ \ +\int_{\Xi}2\delta(u_{j}^{p-1}+u_{k}^{p-1})\left|\partial_{t}\zeta\right|\d z\\
 & \ \ \ +\sum_{l=j,k}\int_{\Xi\cap\left\{ Du_{l}\not=0\right\} }2\delta\left|D\zeta\right|\left|Du_{l}\right|^{p-1}+2\delta E_{\varepsilon}\zeta\big(\left|Du_{l}\right|^{p}h(\left|Du_{l}\right|)+\left|Du_{l}\right|^{\max(0,p-2)}\big)\d z.
\end{align*}
 At the left-hand side of the above estimate we have
\begin{align*}
 & \int_{\Xi\cap\left\{ \left|u_{j}^{p-1}-u_{k}^{p-1}\right|<\delta\right\} }\zeta\big(\left|Du_{j}\right|^{p-2}Du_{j}-\left|Du_{k}\right|^{p-2}Du_{k}\big)\cdot(u_{j}^{p-2}Du_{j}-u_{k}^{p-2}Du_{k})\d z\\
 & =\int_{\Xi\cap\left\{ \left|u_{j}^{p-1}-u_{k}^{p-1}\right|<\delta\right\} }\zeta u_{j}^{p-2}\big(\left|Du_{j}\right|^{p-2}Du_{j}-\left|Du_{k}\right|^{p-2}Du_{k}\big)\cdot(Du_{j}-Du_{k})\d z\\
 & \ \ \ -\int_{\Xi\cap\left\{ \left|u_{j}^{p-1}-u_{k}^{p-1}\right|<\delta\right\} }\zeta(u_{k}^{p-2}-u_{j}^{p-2})(\left|Du_{j}\right|^{p-2}Du_{j}-\left|Du_{k}\right|^{p-2}Du_{k})\cdot Du_{k}\d z.
\end{align*}
Combining the last two displays we have
\begin{align}
 & (p-1)\int_{\Xi\cap\left\{ \left|u_{j}^{p-1}-u_{k}^{p-1}\right|<\delta\right\} }\zeta u_{j}^{p-2}(\left|Du_{j}\right|^{p-2}Du_{j}-\left|Du_{k}\right|^{p-2}Du_{k})\cdot(Du_{j}-Du_{k})\d z\nonumber \\
 & \leq\int_{\Xi}\zeta(u_{j}^{p-1}-u_{k}^{p-1})\partial_{t}w_{jk}\d z\nonumber \\
 & \ \ \ +\int_{\Xi}2\delta(u_{j}^{p-1}+u_{k}^{p-1})\left|\partial_{t}\zeta\right|\d z\nonumber \\
 & \ \ \ +\sum_{l=j,k}\int_{\Xi\cap\left\{ Du_{l}\not=0\right\} }2\delta\left|D\zeta\right|\left|Du_{l}\right|^{p-1}+2\delta E_{\varepsilon}\zeta(\left|Du_{l}\right|^{p}h(\left|Du_{l}\right|)+\left|Du_{l}\right|^{\max(0,p-2)})\d z\nonumber \\
 & \ \ \ +(p-1)\int_{\Xi\cap\left\{ \left|u_{j}^{p-1}-u_{k}^{p-1}\right|<\delta\right\} }\zeta(u_{k}^{p-2}-u_{j}^{p-2})(\left|Du_{j}\right|^{p-2}Du_{j}-\left|Du_{k}\right|^{p-2}Du_{k})\cdot Du_{k}\d z\nonumber \\
 & =:I_{1}+I_{2}+I_{3}+I_{4},\label{eq:strong conv 2}
\end{align}
where the definition of $I_{1},\ldots,I_{4}$ is again apparent. We
now proceed to estimate the right-hand side. 

\textbf{Estimate of $I_{1}$:} Using the definition of $w_{jk}$ and
integrating by parts we get 
\begin{align*}
I_{1} & =\int_{\Xi\cap\left\{ \left|u_{j}^{p-1}-u_{k}^{p-1}\right|<\delta\right\} }\zeta(u_{j}^{p-1}-u_{k}^{p-1})\partial_{t}(u_{j}^{p-1}-u_{k}^{p-1})\d z\\
 & =\int_{\Xi\cap\left\{ \left|u_{j}^{p-1}-u_{k}^{p-1}\right|<\delta\right\} }\frac{1}{2}\zeta\partial_{t}(u_{j}^{p-1}-u_{k}^{p-1})^{2}\d z\leq\int_{\Xi}\frac{1}{2}w_{jk}^{2}\left|\partial_{t}\zeta\right|\d z\leq\delta^{2}C(\zeta)
\end{align*}

\textbf{Estimate of $I_{2}$:} Since $\left|u_{l}\right|\le M$ in
$\Xi$ for all $l\in\mathbb{N}$, we have 
\[
I_{2}=\int_{\Xi}2\delta(\left|u_{j}\right|^{p-1}+\left|u_{k}\right|^{p-1})\left|\partial_{t}\zeta\right|\d z\leq\delta C(p,M,\zeta).
\]

\textbf{Estimate of $I_{3}$:} Observe that by Lemma \ref{lem:caccioppoli}
we have
\begin{align}
\sup_{l\in\mathbb{N}}\int_{\supp\zeta}\left|Du_{l}\right|^{p}\d z & \leq C(p,h,M,\zeta).\label{eq:strong conv gradient bound}
\end{align}
Since $h$ is a bounded function and $E_{\varepsilon}\in(0,1]$, the
estimate (\ref{eq:strong conv gradient bound}) together with H{\"o}lder's
inequality readily implies
\begin{align*}
I_{3} & =\sum_{l=j,k}\int_{\Xi\cap\left\{ Du_{l}\not=0\right\} }2\delta\left|D\zeta\right|\left|Du_{l}\right|^{p-1}+2\delta E_{\varepsilon}\zeta(\left|Du_{l}\right|^{p}h(\left|Du_{l}\right|)+\left|Du_{l}\right|^{\max(0,p-2)})\d z\\
 & \leq\delta C(p,h,M,\zeta).
\end{align*}

\textbf{Estimate of $I_{4}$:} Observe that since $0<m\leq u_{l}\leq M$
in $\Xi$ for all $l\in\mathbb{N}$, we have
\[
\left|u_{k}^{p-2}-u_{j}^{p-2}\right|\leq C(p,m,M)\left|u_{j}^{p-1}-u_{k}^{p-1}\right|\quad\text{in\ensuremath{\quad}}\Xi,
\]
using the mean value theorem.
% by the fact if $f(a)=a^\gamma$, $\gamma \in \mathbb{R}$, then there is $C=C(\gamma, m, M)$ such that $C^{-1}<f^\prime<C$.
Using this and (\ref{eq:strong conv gradient bound}), we estimate
\begin{align*}
I_{4}= & (p-1)\int_{\Xi\cap\left\{ \left|u_{j}^{p-1}-u_{k}^{p-1}\right|<\delta\right\} }\zeta(u_{k}^{p-2}-u_{j}^{p-2})(\left|Du_{j}\right|^{p-2}Du_{j}-\left|Du_{k}\right|^{p-2}Du_{k})\cdot Du_{k}\d z\\
 & \leq C(p,m,M)\delta\left(\left(\int_{\supp\zeta}\left|Du_{j}\right|^{p}\d z\right)^{1/p}\left(\int_{\supp\zeta}\left|Du_{k}\right|^{p}\d z\right)^{1/p}+\int_{\supp\zeta}\left|Du_{k}\right|^{p}\d z\right)\\
 & \leq C(p,h,m,M,\zeta)\delta.
\end{align*}
Combining the estimates of $I_{1},\ldots,I_{4}$ with (\ref{eq:strong conv 2}),
we obtain
\begin{equation}
\int_{\Xi\cap\left\{ \left|u_{j}^{p-1}-u_{k}^{p-1}\right|<\delta\right\} }\zeta u_{j}^{p-2}(\left|Du_{j}\right|^{p-2}Du_{j}-\left|Du_{k}\right|^{p-2}Du_{k})\cdot(Du_{j}-Du_{k})\d z\leq\delta C_{0},\label{eq:bla1}
\end{equation}
where $C_{0}=C_{0}(p,h,M,m,\zeta)$. We now estimate the left-hand
side. If $1<p<2$, we use the vector inequality (see \cite[p98]{lindqvist_plaplace})
\[
\left(\left|a\right|^{p-2}a-\left|b\right|^{p-2}b\right)\cdot(a-b)\geq(p-1)\left|a-b\right|^{2}\left(1+\left|a\right|^{2}+\left|b\right|^{2}\right)^{\frac{p-2}{2}}
\]
estimate (\ref{eq:bla1}) and H\"older's inequality to get that (recall $\zeta\equiv1$ in $U$)
\begin{align}
 & \int_{U\cap\left\{ \left|u_{j}^{p-1}-u_{k}^{p-1}\right|<\delta\right\} }\left|Du_{j}-Du_{k}\right|^{r}\d z\nonumber \\
 & \leq\Bigg(\int_{U\cap\left\{ \left|u_{j}^{p-1}-u_{k}^{p-1}\right|<\delta\right\} }\left(1+\left|Du_{j}\right|^{2}+\left|Du_{k}\right|^{2}\right)^{\frac{r(2-p)}{2(2-r)}}\d z\Bigg)^{\frac{2-r}{2}}\nonumber \\
 & \ \ \ \cdot\Bigg(\int_{\Xi\cap\left\{ \left|u_{j}^{p-1}-u_{k}^{p-1}\right|<\delta\right\} }\frac{\zeta\left|Du_{j}-Du_{k}\right|^{2}}{\left(1+\left|Du_{j}\right|^{2}+\left|Du_{k}\right|^{2}\right)^{\frac{2-p}{2}}}\d z\Bigg)^{\frac{r}{2}}\nonumber \\
 & \leq C(p,h,r,m,M,\zeta)C_{0}\delta^{\frac{r}{2}},\label{eq:bla2}
\end{align}
where it was also used that $r(2-p)/(2-r)<p$. If $p\geq2$, using
the inequality (see \cite[p95]{lindqvist_plaplace})
\[
(\left|a\right|^{p-2}a-\left|b\right|^{p-2}b)\cdot(a-b)\geq2^{2-p}\left|a-b\right|^{p}
\]
and (\ref{eq:bla1}) we get
\begin{align}
 & \int_{U\cap\left\{ \left|u_{j}^{p-1}-u_{k}^{p-1}\right|<\delta\right\} }\left|Du_{j}-Du_{k}\right|^{r}\d z\nonumber \\
 & \leq C\left|\supp\zeta\right|^{\frac{1-r}{p}}\left(\int_{U\cap\left\{ \left|u_{j}^{p-1}-u_{k}^{p-1}\right|<\delta\right\} }\zeta\left|Du_{j}-Du_{k}\right|^{p}\d z\right)^{\frac{r}{p}}\nonumber \\
 & \leq C(p,r,m)\left(\int_{U\cap\left\{ \left|u_{j}^{p-1}-u_{k}^{p-1}\right|<\delta\right\} }\zeta u_{j}^{p-2}\left|Du_{j}-Du_{k}\right|^{p}\d z\right)^{\frac{r}{p}}\nonumber \\
 & \leq C(p,r,m)C_{0}\delta^{\frac{r}{p}}.\label{eq:bla3}
\end{align}
Combining the estimates (\ref{eq:bla2}) and (\ref{eq:bla3}), we
arrive at
\[
\int_{U\cap\left\{ \left|u_{j}^{p-1}-u_{k}^{p-1}\right|<\delta\right\} }\left|Du_{j}-Du_{k}\right|^{r}\d z\leq C(p,h,r,m,M,\zeta)\delta^{\frac{r}{\max(p,2)}}.
\]
On the other hand, by H{\"o}lder's and Chebyshev's inequalities and estimate
(\ref{eq:strong conv gradient bound}),
\begin{align*}
 & \int_{U\cap\left\{ \left|u_{j}^{p-1}-u_{k}^{p-1}\right|\geq\delta\right\} }\left|Du_{j}-Du_{k}\right|^{r}\d z\\
 & \leq\left|U\cap\left\{ \left|u_{j}^{p-1}-u_{k}^{p-1}\right|\geq\delta\right\} \right|^{\frac{p-r}{p}}\left(\int_{U\cap\left\{ \left|u_{j}^{p-1}-u_{k}^{p-1}\right|\geq\delta\right\} }\left|Du_{j}-Du_{k}\right|^{p}\d z\right)^{\frac{r}{p}}\\
 & \leq\left(\frac{1}{\delta}\int_{U}\left|u_{j}^{p-1}-u_{k}^{p-1}\right|\d z\right)^{\frac{p-r}{p}}C(p,r,h,M,\zeta).
\end{align*}
By the last two displays, we have now established that
\[
\int_{U}\left|Du_{j}-Du_{k}\right|^{r}\d z\leq\left(\left(\frac{1}{\delta}\int_{U}\left|u_{j}^{p-1}-u_{k}^{p-1}\right|\d z\right)^{\frac{p-r}{p}}+\delta^{\frac{r}{\max(p,2)}}\right)C(p,h,r,m,M,\zeta).
\]
Taking first small $\delta$ and then large $j$ and $k$, we see
that the right-hand side can be made arbitrarily small. Thus $(Du_{l})$
is a Cauchy sequence in $L_{loc}^{r}(\Xi)$ .
\end{proof}

\subsection{Passing to the limit}
\label{sec:limit}

It remains to pass to the limit $\varepsilon\rightarrow0$ to show                       
that viscosity supersolutions are weak supersolutions.
\begin{thm}
\label{thm:visc is weak} Let $u$ be a continuous viscosity
supersolution to (\ref{eq:trudinger}) in $\Xi$. Suppose that $0<m\leq u\leq M$.
Then $u$ is a local weak supersolution to (\ref{eq:trudinger}) in
$\Xi$.
\end{thm}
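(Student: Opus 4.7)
Fix an open set $\Xi' \Subset \Xi$; it suffices to show $u$ is a weak supersolution on $\Xi'$. Pick $\Xi''$ with $\Xi' \Subset \Xi'' \Subset \Xi$. Since $u$ is continuous on the compact set $\overline{\Xi''}$, it is uniformly continuous there. Form the inf-convolution $u_\varepsilon$ on $\Xi''$ as in \eqref{eq:inf conv special}. By Lemma~\ref{lem:inf conv is weak}, once $\varepsilon$ is small enough that $\Xi' \subset \Xi''_\varepsilon$, we have
\begin{equation*}
\int_{\Xi'} -u_\varepsilon^{p-1}\partial_t\varphi + |Du_\varepsilon|^{p-2}Du_\varepsilon\cdot D\varphi - \varphi\, G(Du_\varepsilon)\, dz \ge 0
\end{equation*}
for every non-negative test function $\varphi \in C_0^\infty(\Xi')$, where $G$ is as in Lemma~\ref{lem:inf conv is weak}. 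By Lemma~\ref{lem:inf properties}(i), $u_\varepsilon \to u$ pointwise, and by construction $m \le u_\varepsilon \le M$ uniformly.

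\textbf{Compactness and convergence of gradients.} Lemma~\ref{lem:inf properties}(iii) shows $u_\varepsilon$ is locally Lipschitz in $\Xi''_\varepsilon$, so the Caccioppoli estimate (Lemma~\ref{lem:caccioppoli}) with a cutoff $\xi \in C_0^\infty(\Xi'')$ equal to $1$ on $\Xi'$ produces a bound $\int_{\Xi'} |Du_\varepsilon|^p\, dz \le C$ independent of $\varepsilon$. Lemma~\ref{lem:strong conv}, applied to the sequence $u_\varepsilon$, then yields $Du_\varepsilon \to Du$ in $L^r_{loc}(\Xi')$ for every $1 < r < p$, and along a subsequence we may assume $Du_\varepsilon \to Du$ a.e.~in $\Xi'$.

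\textbf{Passage to the limit.} The term $\int -u_\varepsilon^{p-1}\partial_t\varphi\, dz$ converges to $\int -u^{p-1}\partial_t\varphi \, dz$ by dominated convergence using $m \le u_\varepsilon \le M$. For the nonlinear flux, the a.e.~convergence of $Du_\varepsilon$ gives a.e.~convergence of $|Du_\varepsilon|^{p-2}Du_\varepsilon$ to $|Du|^{p-2}Du$; the uniform $L^p$-bound on $Du_\varepsilon$ provides an $L^{p/(p-1)}$-bound for $|Du_\varepsilon|^{p-2}Du_\varepsilon$ on $\supp\varphi$, hence equi-integrability, so Vitali's convergence theorem delivers strong $L^1$ convergence, which is enough to integrate against $D\varphi\in L^\infty$. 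For the error term, boundedness of $h$ together with the uniform $L^p$-bound on $Du_\varepsilon$ yields
\begin{equation*}
\Big|\int_{\Xi'} \varphi\, G(Du_\varepsilon)\, dz\Big| \le E_\varepsilon \|\varphi\|_\infty \int_{\supp\varphi}\!\big(\|h\|_\infty |Du_\varepsilon|^p + |Du_\varepsilon|^{\max(p-2,0)}\big)\, dz \longrightarrow 0
\end{equation*}
since $E_\varepsilon \to 0$. Assembling the three limits produces the weak supersolution inequality for $u$ on $\Xi'$, which was arbitrary.

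\textbf{Main obstacle.} The delicate point is passing to the limit in the nonlinear flux $|Du_\varepsilon|^{p-2}Du_\varepsilon$: the uniform $L^p$-bound alone gives only weak convergence, which is inadequate for a nonlinear quantity, and moreover one needs to be sure that the error term $\varphi G(Du_\varepsilon)$, whose growth in $|Du_\varepsilon|$ is of the same order $p$ as the leading term, really becomes negligible. The strong $L^r_{loc}$-convergence coming from Lemma~\ref{lem:strong conv}, combined with the equi-integrability supplied by the Caccioppoli bound and with the prefactor $E_\varepsilon\to 0$ in front of $G$, is precisely what resolves both difficulties.
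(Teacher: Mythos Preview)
Your proposal is correct and follows essentially the same strategy as the paper: localize, apply Lemma~\ref{lem:inf conv is weak}, use Lemma~\ref{lem:caccioppoli} for the uniform $L^p$ gradient bound, invoke Lemma~\ref{lem:strong conv} for strong $L^r_{loc}$ convergence of the gradients, and pass to the limit term by term. The only cosmetic differences are that you pass to the limit in the flux via a.e.\ convergence plus Vitali (equi-integrability from the $L^{p/(p-1)}$ bound) whereas the paper uses the vector inequalities $\bigl||a|^{p-2}a-|b|^{p-2}b\bigr|\le C(|a|^{p-2}+|b|^{p-2})|a-b|$ (resp.\ $\le C|a-b|^{p-1}$) directly with the $L^r$ convergence, and the paper spells out explicitly that the $L^r$ limit of $Du_\varepsilon$ is the distributional gradient $Du$ (via $\int u\,\partial_i\varphi=\lim\int u_\varepsilon\,\partial_i\varphi=-\lim\int\varphi\,\partial_i u_\varepsilon$), a routine step you took for granted.
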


\begin{proof}
First, we may suppose that $u$ is uniformly continuous in $\Xi$ by taking a smaller domain if necessary. Then, fix domains $U\Subset U^{\prime}\Subset\Xi$. Let $u_{\varepsilon}$
be the inf-convolution of $u_{\varepsilon}$ and let $\varepsilon>0$
be so small that $U^{\prime}\Subset\Xi_{\varepsilon}$. By Lemma \ref{lem:inf conv is weak}
the inf-convolution is a weak supersolution to 
\begin{equation}
\partial_{t}u_{\varepsilon}^{p-1}-\Delta_{p}u_{\varepsilon}+G(Du_{\varepsilon})\geq0\quad\text{in }U^{\prime}.\label{eq:main thm 1}
\end{equation}
 Then by Lemma \ref{lem:caccioppoli} we have $Du_{\varepsilon}$
bounded in $L^{p}(U)$. Thus, passing to a subsequence if necessary,
we have $Du_{\varepsilon}\rightarrow g$ weakly in $L^{p}(U)$ for
some $g\in L^{p}(U)$. Then for any $i\in\left\{ 1,\ldots,N\right\} $
and $\varphi\in C_{0}^{\infty}(U)$ we have
\[
\int_{U}u\partial_{i}\varphi\d z=\lim_{\varepsilon\rightarrow0}\int_{U}u_{\varepsilon}\partial_{i}\varphi\d z=-\lim_{\varepsilon\rightarrow0}\int_{U}\varphi\partial_{i}u_{\varepsilon}\d z=-\int_{U}\varphi\partial_{i}u\d z
\]
so that $u$ has a weak distributional space derivative $Du=g\in L^{p}(U)$.

Let $\varphi\in C_{0}^{\infty}(U)$. It now remains to pass to the
limit in 
\[
\int_{U}-u_{\varepsilon}^{p-1}\partial_{t}\varphi+\left|Du_{\varepsilon}\right|^{p-2}Du_{\varepsilon}\cdot D\varphi\d z+\int_{U\cap\left\{ Du_{\varepsilon}\not=0\right\} }E_{\varepsilon}(\left|Du_{\varepsilon}\right|^{p}h(\left|Du_{\varepsilon}\right|)+\left|Du_{\varepsilon}\right|^{\max(0,p-2)})\varphi\d z.
\]
Convergence of the first order terms is clear since $u_{\varepsilon}\rightarrow u$
pointwise, $E_{\varepsilon}\rightarrow0$ and $\left|Du_{\varepsilon}\right|$
is bounded in $L^{p}(U)$. For the second-order term, one uses that
$Du_{\varepsilon}\rightarrow Du$ in $L^{r}(U)$ for any $1<r<p$ from Lemma~\ref{lem:strong conv}
together with the vector inequality 
\[
\left|\left|a\right|^{p-2}a-\left|b\right|^{p-2}b\right|\leq\begin{cases}
2^{2-p}\left|a-b\right|^{p-1}, & 1<p<2,\\
(p-1)(\left|a\right|^{p-2}+\left|b\right|^{p-2})\left|a-b\right|, & p\geq2,
\end{cases}
\]
which can be found on \cite[pp97-98]{lindqvist_plaplace}.
This way, by (\ref{eq:main thm 1}), we arrive at
\[
\int_{U}\left(-u^{p-1}\partial_{t}\varphi+\left|Du\right|^{p-2}Du\cdot D\varphi\right)\d z\geq0.\qedhere
\]
\end{proof}

\begin{thm}\label{thm:equivalence}
\label{thm:visc equiv weak} Suppose that $0<m\leq u\leq M$.
Then a viscosity solution $u$ to (\ref{eq:trudinger}) is a weak solution and vice versa.
\end{thm}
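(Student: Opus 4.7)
The plan is to assemble the equivalence from two directions. For the direction \emph{viscosity $\Rightarrow$ weak}, I would apply Theorem~\ref{thm:visc is weak} twice. If $u$ is a viscosity solution, then in particular it is a viscosity supersolution, so Theorem~\ref{thm:visc is weak} immediately gives that $u$ is a local weak supersolution. The subsolution side is symmetric: one runs the same argument of Section~\ref{sec:inf-conv}--\ref{sec:limit} with the sup-convolution
\[
u^{\varepsilon}(x,t):=\sup_{(y,s)\in\Xi}\Big\{u(y,s)-\frac{|y-x|^{q}}{q\varepsilon^{q-1}}-\frac{|s-t|^{2}}{2\delta_{\varepsilon}}\Big\}
\]
in place of $u_{\varepsilon}$. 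The bounds $0<m\le u\le M$ are preserved, the semi-convexity gives pointwise second-order information a.e., and the counterpart of Lemma~\ref{lem:inf conv is super} yields a viscosity subsolution to Trudinger's equation with the same type of first-order error term, now with a reversed sign. The Caccioppoli and strong convergence lemmas (Lemmas~\ref{lem:caccioppoli} and~\ref{lem:strong conv}) depend only on the local $L^\infty$-bound and the structure of $G$, so they transfer without change, and the passage to the limit in Theorem~\ref{thm:visc is weak} gives that $u$ is a local weak subsolution. Combining the two sides, $u$ is a weak solution.

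For the direction \emph{weak $\Rightarrow$ viscosity}, I would invoke the already available result: Theorem~3 in \cite{lindgrenl22} states precisely that positive continuous weak solutions to \eqref{eq:trudinger} are viscosity solutions. Since the hypothesis $0<m\le u\le M$ gives positivity, and a bounded weak solution to Trudinger's equation is continuous by the H\"older regularity results recalled in the introduction (\cite{kuusisu12} for $p\ge 2$ and \cite{kuusilsu12} for $1<p<2$), the cited theorem applies directly and gives the reverse implication.

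The main obstacle is really the viscosity-to-weak direction, and that has already been handled in Theorem~\ref{thm:visc is weak} for supersolutions; the subsolution case requires only minor sign bookkeeping, so no new difficulty arises at the level of this theorem. I would therefore write the proof as a short two-sentence argument that (i) quotes Theorem~\ref{thm:visc is weak} (and its subsolution counterpart obtained by sup-convolution) for one direction and (ii) quotes \cite[Theorem~3]{lindgrenl22} for the other.
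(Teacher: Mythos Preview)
Your plan matches the paper's own proof almost exactly: Theorem~\ref{thm:visc is weak} (plus its sup-convolution mirror for subsolutions) handles one direction, and \cite[Theorem~3]{lindgrenl22} handles the other. One point you should not gloss over, however, is that \cite{lindgrenl22} is written under the standing assumption $p\ge 2$, so quoting Theorem~3 there verbatim does not cover the singular range $1<p<2$. The paper notices this and supplies the short missing argument: assuming a $C^2$ test function $\varphi$ touches $u$ from below at $(x,t)$ and violates the supersolution inequality, one shows $\varphi$ is a strict weak subsolution in a small punctured cylinder (the puncture is removed by a limiting argument since $|D\varphi|^{p-2}D\varphi\to 0$ as $y\to x$ when $1<p<2$), then lifts $\varphi$ multiplicatively by $h:=\inf_{\partial_{\mathcal P}}u/\varphi>1$ and invokes the weak comparison principle \cite[Corollary~1]{lindgrenl22} (which \emph{is} valid for all $p>1$) to reach a contradiction. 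If you add this paragraph for $1<p<2$, your proof is complete and identical in spirit to the paper's.
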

\begin{proof}
The theorem follows by the previous result and by recalling Theorem 3 in \cite{lindgrenl22} where they show that weak solutions are viscosity solutions. In \cite{lindgrenl22} they assume $p\ge 2$ but under our positivity assumptions this is not needed.

For the convenience of the reader, we provide the details in the case $1<p<2$.
Suppose on the contrary that there is $\varphi\in C^{2}$ that touches $u$ from below at $(x,t)$, $D\varphi(y,s)\not=0$ for
$y\not=x$, and
\[
\limsup_{(y,s)\rightarrow (x,t), y\not=x}\left(\partial_{t} \varphi(y,s)-\frac{1}{u^{p-2}(x,t)}\Delta_{p}\varphi(y,s)\right)<-\varepsilon.
\]
It follows  that
\[
(p-1)\partial_{t}\varphi-\frac{1}{u^{p-2}(x,t)}\Delta_{p}\varphi<-\delta
\]
in a neighborhood of $(x,t)$. Since $\varphi$ is $C^{2}$, $\varphi(x,t)=u(x,t)$
and $0<m\leq u\leq M$, taking $r>0$ to be small enough we may ensure
that $\varphi > 0$ and
\[
(p-1)\varphi^{2-p}\partial_{t}\varphi-\Delta_{p}\varphi<0
\]
in a cylindrical neighborhood $(B_{r}(x)\setminus\left\{ x\right\} )\times(t-r,t+r)$.
Multiplying this by non-negative test function $\phi\in C_{0}^{\infty}((B_{r}(x)\setminus\left\{ x\right\} )\times(t-r,t+r))$
and integrating, we obtain for any $0<\epsilon<r$ that
\begin{align*}
0 & >\int_{t-r}^{t+r}\int_{B_{r}(x)\setminus B_{\epsilon}(x)} \left(\phi\partial_{t}\varphi^{p-1}-\phi\Delta_{p}\varphi \right)\d y\d s \\& =\int_{t-r}^{t+r}\int_{B_{r}(x)\setminus B_{\epsilon}(x)}\left(-\varphi^{p-1}\partial_{t}\phi+\left|D\varphi\right|^{p-2}D\varphi\cdot D\phi\right)\d y\d s\\
 & \ \ \ +\int_{t-r}^{t+r}\int_{\partial B_{\epsilon}(x)}\left(\varphi\left|D\phi\right|^{p-2}D\phi\cdot\frac{y-x}{\epsilon}\right)\d S(y)\d s.
\end{align*}
Letting $\epsilon\rightarrow0$, this yields
\[
\int_{B_{r}(x_{0})\times(t-r,t+r)}\left(-\varphi^{p-1}\partial_{t}\phi+\left|D\varphi\right|^{p-2}D\varphi\cdot D\phi\right)\d z<0
\]
so that $\varphi$ is a weak subsolution in $B_{r}(x)\times(t-r,t+r)$.
But then so is 
\[
\tilde{\varphi}(y,s):=h\varphi(y,s),\quad\text{where }h:=\inf_{(y,s)\in\partial_{\mathcal{P}}B_{r}(x_{0})\times(t-r,t+r)}\frac{u(y,s)}{\varphi(y,s)}>1,
\]
and $\partial_\mathcal{P}$ denotes the parabolic boundary. Now by comparison principle of weak solutions \cite[Corollary 1]{lindgrenl22}, we have $\tilde{\varphi}\leq u$
in $B_{r}(x)\times(t-r,t+r)$, but this contradicts the fact that
$\tilde{\varphi}(x,t)=h\varphi(x,t)=hu(x,t)>u(x,t)$.
\end{proof}

As an application, and we state this just for curiosity, we also get that the limit  of a positive, equicontinuous and bounded  sequence of supersolutions (either viscosity or weak) is a  supersolution. For the $p$-parabolic equation this is a special case of Theorem 5.3 in \cite{korteKuusiParv10}, see also \cite{lindqvistm07}. 
\begin{corollary}
    Suppose that $u_i,\ i=1,2,\ldots$
is a sequence of equicontinuous (weak or viscosity) supersolutions such that $0< m\le  u_i \le M$ and $u_i$ converges to $u$
almost everywhere and thus locally uniformly. Then $u$ is a (weak and viscosity) supersolution.
\end{corollary}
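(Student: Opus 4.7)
The plan is to reduce to the case of weak supersolutions via the equivalence, then perform a standard compactness argument. By Theorem~\ref{thm:visc is weak} and the weak-implies-viscosity direction obtained in the proof of Theorem~\ref{thm:equivalence}, the hypothesis that each $u_i$ is either a weak or a viscosity supersolution can be upgraded to: each $u_i$ is a weak supersolution; once the limit $u$ is shown to be a weak supersolution, the same equivalence will deliver the viscosity conclusion. A preliminary observation is that equicontinuity of $(u_i)$ together with pointwise a.e.\ convergence to $u$ forces $u_i\to u$ locally uniformly by Arzel\`a--Ascoli, so that $u$ is continuous with $m\le u\le M$ and in particular $\int u_i^{p-1}\partial_t\varphi\d z\to \int u^{p-1}\partial_t\varphi\d z$ by dominated convergence.

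The second step is to obtain a uniform $L^{p}_{\mathrm{loc}}$ bound on $(Du_i)$. This is the content of Lemma~\ref{lem:caccioppoli} applied with $E_{\varepsilon}=0$, which makes the error term $G$ vanish identically and trivially satisfies its growth requirements. The local Lipschitz hypothesis in that lemma was only needed to make $\xi^{p}(M-u)$ an admissible test function in the presence of the error term; in its absence a standard Steklov average in time makes the same choice of test function admissible for arbitrary weak supersolutions of Trudinger's equation. Consequently $\|Du_i\|_{L^{p}(U)}\le C$ uniformly in $i$ on every $U\Subset\Xi$, and after extracting a subsequence $Du_i\to Du$ weakly in $L^{p}_{\mathrm{loc}}(\Xi)$, so that $u$ has a distributional spatial gradient in $L^{p}_{\mathrm{loc}}$.

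The main obstacle is that weak convergence of $(Du_i)$ is not sufficient to pass to the limit in the nonlinear term $\int|Du_i|^{p-2}Du_i\cdot D\varphi\d z$; one must upgrade to strong convergence in $L^{r}_{\mathrm{loc}}$ for some $1<r<p$. For this I would repeat the argument of Lemma~\ref{lem:strong conv} with $E_{\varepsilon}=0$, testing the weak inequalities for $u_j$ and $u_k$ against $(\delta\mp w_{jk})\zeta$, where
\[
w_{jk}=\max\bigl(-\delta,\min(\delta,u_j^{p-1}-u_k^{p-1})\bigr),
\]
and exploiting the monotonicity of $a\mapsto|a|^{p-2}a$ together with the two-sided bound $0<m\le u_i\le M$ exactly as in the proof of that lemma; again the Lipschitz hypothesis can be removed by a Steklov average since no error term is present. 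Once $(Du_i)$ is Cauchy in $L^{r}_{\mathrm{loc}}$, the vector inequalities for $a\mapsto|a|^{p-2}a$ quoted at the end of the proof of Theorem~\ref{thm:visc is weak} yield $|Du_i|^{p-2}Du_i\to |Du|^{p-2}Du$ in $L^{1}_{\mathrm{loc}}$, and passing to the limit in
\[
\int_{\Xi}\bigl(-u_i^{p-1}\partial_t\varphi+|Du_i|^{p-2}Du_i\cdot D\varphi\bigr)\d z\ge 0
\]
for nonnegative $\varphi\in C_{0}^{\infty}(\Xi)$ shows that $u$ is a weak supersolution. The equivalence then promotes $u$ to a viscosity supersolution as well.
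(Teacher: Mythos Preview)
Your argument is correct, but it takes a genuinely different route from the paper's. You apply Lemmas~\ref{lem:caccioppoli} and~\ref{lem:strong conv} directly to the weak supersolutions $u_i$ with $E_\varepsilon=0$, replacing the Lipschitz hypothesis by Steklov averaging in time. The paper instead passes through the inf-convolution machinery: it converts each $u_i$ to a viscosity supersolution, takes inf-convolutions $u_{i,\varepsilon}$ (which are Lipschitz and satisfy the equation with error term by Lemma~\ref{lem:inf conv is weak}), observes that equicontinuity makes the function $h$ in the error term uniform in $i$, applies Lemmas~\ref{lem:caccioppoli} and~\ref{lem:strong conv} verbatim to the Lipschitz $u_{i,\varepsilon}$, and finally takes the diagonal $v_i=u_{i,i^{-1}}$.

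Your approach is more direct and shows that equicontinuity is only needed to ensure continuity of the limit $u$ (for the viscosity conclusion), not for the energy estimates themselves. The paper's approach is entirely self-contained within the machinery already built and avoids having to rework the two lemmas for non-Lipschitz supersolutions. One small correction: your claim that the Lipschitz hypothesis in Lemma~\ref{lem:caccioppoli} was ``only needed in the presence of the error term'' is not quite right---it is used to make sense of $\partial_t u$ in the test function $\xi^p(M-u)$ and in the integration by parts~\eqref{eq:energy est 2}, independently of $G$. Your Steklov-average remedy is nonetheless the correct workaround in either case; the same applies to the estimate of $I_1$ in Lemma~\ref{lem:strong conv}, where one must handle $\partial_t(u_j^{p-1}-u_k^{p-1})$.
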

%%%%%%%%%%%%%%%%%%%%%%%
\begin{proof}
Let $\Xi^\prime \Subset \Xi$ and $u_i$ be a sequence of weak supersolutions satisfying the assumptions. By \cite{lindgrenl22}, each $u_{i}$
is a viscosity supersolution to \eqref{eq:trudinger} in $\Xi$. Therefore, for small enough $\varepsilon$ it follows from Lemma \ref{lem:inf conv is weak} that the inf-convolution
$u_{i,\varepsilon}$ is a weak supersolution to \eqref{eq:weak equation with error} in $\Xi^\prime$, where the  function $h$ that appears in the definition of $G$ in \eqref{eq:G} depends on the modulus of continuity of $u_i$. Since the functions
$u_i$ are equicontinuous, this $h$ can be taken to be independent of $i$. As $u_{i,\varepsilon}$ are Lipschitz,
the Caccioppoli's estimate in Lemma \ref{lem:caccioppoli} implies the local boundedness of $Du_{i,\varepsilon}$ in $L^p_{\text{loc}}(\Xi^\prime)$. Taking the diagonal, that is, setting $v_i=u_{i,i^{-1}}$, we have by Lemma \ref{lem:strong conv} that $Dv_i$ converges for any $1<r<p$ in $L^r_{\text{loc}}(\Xi^\prime)$ up to a subsequence. An argument similar to the proof of Theorem \ref{thm:equivalence} then shows that the pointwise limit of $v_i$ is a local weak supersolution in $\Xi^\prime$ and this limit is $u$, as by equicontinuity $u_{i,\varepsilon}$ converges locally uniformly in $\Xi$ to $u_i$ as $\varepsilon \rightarrow 0$ with a speed independent of $i$.
\end{proof}

\section{Local Lipschitz continuity in space}

We prove local Lipschitz continuity of positive viscosity solutions
to (\ref{eq:trudinger}) in the space variable using the Ishii-Lions
method. This is because in the proof,
the solution is essentially frozen, and therefore the H{\"o}lder and Lipschitz
estimates produced by the method will depend on the modulus of continuity
of the solution.

Let $u:\Xi\rightarrow\mathbb{R}$, where $\Xi\subset\mathbb{R}^{N+1}$
is a set. We say that 
\[
\omega(s):=\sup_{(x,t),(y,t)\in\Xi,\left|x-y\right|\leq s}\left|u(x,t)-u(y,t)\right|
\]
is the \textit{optimal modulus of continuity} of $u$ in $\Xi$ in
space variable. We carefully observe that we do not assume Lipschitz continuity at this point, and the subindex $x$ is omitted in the notation of $\omega$. We also denote
\[
Q_{r}:=B_{r}(0)\times(-r,0]\quad\text{for }r>0
\]
and its parabolic boundary by $\partial_{\mathcal{P}} Q_r := (\overline B_r (0) \times \{-r\}) \cup (\partial B_r (0) \times (-r, 0]).$
\begin{thm}[Lipschitz continuity]
\label{thm:lipschitz theorem}Let $u$ be a uniformly continuous
viscosity solution to (\ref{eq:trudinger}) in $B_{2}\times(-2,0)$.
Suppose  that $0<m\leq u\leq M$. Then there exists $L>0$
such that
\[
\left|u(x_{0},t_{0})-u(y_{0},t_{0})\right|\leq L\left|x_{0}-y_{0}\right|\quad\text{for all }(x_{0},t_{0}),(y_{0},t_{0})\in Q_{1/2}
\]
The constant $L$ depends only on $N$, $p$, $m$ and the optimal modulus
of continuity of $u$ in $B_{2}\times(-2,0)$.
\end{thm}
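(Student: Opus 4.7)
My approach is the variable-doubling method of Ishii-Lions, executed twice. For a fixed $(x_0,t_0)\in Q_{1/2}$, I consider the auxiliary function
\[
\Phi(x,y,t):=u(x,t)-u(y,t)-L\phi(\abs{x-y})-\sigma\bigl(\abs{x-x_0}^2+\abs{y-x_0}^2\bigr)-\mu(t-t_0)^2,
\]
where $\phi\in C^2([0,\infty))$ is strictly concave with $\phi(0)=0$ and $\phi'(0^+)=1$, and $\sigma,\mu>0$ localize the maximum inside a small parabolic cylinder around $(x_0,x_0,t_0)$. Arguing by contradiction, if the Lipschitz bound fails for a prescribed constant $L$, then for the right choice of $\phi$ the function $\Phi$ attains a strictly positive maximum at an interior point $(\bar x,\bar y,\bar t)$ with $\bar x\neq\bar y$.

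\textbf{Applying Jensen-Ishii.} The Jensen-Ishii lemma then produces $(a_1,\eta_1,X)\in\overline{\mathcal{P}}^{2,+}u(\bar x,\bar t)$ and $(a_2,\eta_2,Y)\in\overline{\mathcal{P}}^{2,-}u(\bar y,\bar t)$, with $a_1-a_2=2\mu(\bar t-t_0)$, both gradients $\eta_i$ close to $\eta:=L\phi'(\abs{\bar x-\bar y})(\bar x-\bar y)/\abs{\bar x-\bar y}$ up to $O(\sigma)$, and a matrix inequality that, by strict concavity of $\phi$, forces $X-Y$ to have a strongly negative eigenvalue in the direction $\bar x-\bar y$. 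Writing Trudinger's equation in the form $(p-1)\partial_t u = u^{2-p}\Delta_p u$ and subtracting the viscosity sub/supersolution inequalities at the two points yields
\[
(p-1)(a_1-a_2)\leq u(\bar x,\bar t)^{2-p}F(\eta_1,X)-u(\bar y,\bar t)^{2-p}F(\eta_2,Y),
\]
where $F(\eta,Z):=\abs{\eta}^{p-2}\tr\bigl((I+(p-2)\eta\otimes\eta/\abs{\eta}^2)Z\bigr)$. I split the right-hand side as the frozen-coefficient piece $u(\bar x)^{2-p}[F(\eta_1,X)-F(\eta_2,Y)]$ plus the coefficient commutator $[u(\bar x)^{2-p}-u(\bar y)^{2-p}]F(\eta_2,Y)$. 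The Ishii-Lions eigenvalue analysis bounds the frozen-coefficient piece above by $-c(N,p,m,M)L^{p-1}\abs{\phi''(\abs{\bar x-\bar y})}\phi'(\abs{\bar x-\bar y})^{p-2}+O(\sigma)$, while the commutator is bounded by $\omega(\abs{\bar x-\bar y})$ times a power of $L$ and $\phi'(\abs{\bar x-\bar y})$, where $\omega$ is the spatial modulus of continuity of $u$.

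\textbf{Two passes and main obstacle.} The principal difficulty is exactly this coefficient mismatch $\abs{u(\bar x)^{2-p}-u(\bar y)^{2-p}}$: with only the given uniform continuity of $u$, it cannot be absorbed into the strict-concavity gain at the $\phi(r)=r$ scale. I therefore run the argument twice. In the first pass take $\phi(r)=r^\gamma$ for small $\gamma\in(0,1)$; the strong concavity $\abs{\phi''(r)}\sim r^{\gamma-2}$ dominates any bounded modulus $\omega$, producing a quantitative $\gamma$-H\"older estimate for $u$ in a slightly smaller cylinder, with $\gamma$ depending on $N,p,m,M$ and $\omega$. In the second pass, using the new H\"older modulus $\omega(r)\lesssim r^\gamma$, take $\phi(r)=r-\omega_0 r^{1+\beta}$ with $\omega_0$ small and $0<\beta<\gamma$ so that $\phi$ stays concave on the relevant range; the commutator error, now of order $r^\gamma L^{p-1}$, is absorbed by the strict-concavity gain of order $\omega_0 L^{p-1} r^{\beta-1}$, closing the contradiction for $L$ large enough. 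Sending $\sigma,\mu\to 0$ yields the Lipschitz estimate on $Q_{1/2}$. The qualitative nature of the estimate is inevitable: the Lipschitz constant $L$ inherits its dependence on the non-quantified modulus $\omega$ through the intermediate H\"older step, which is why the Ishii-Lions method must be iterated twice.
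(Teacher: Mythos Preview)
Your proposal is correct and follows essentially the same two-pass Ishii--Lions strategy as the paper: first $\phi(r)=r^\gamma$ to upgrade the arbitrary modulus to a H\"older one, then $\phi(r)=r-\omega_0 r^{1+\beta}$ with $\beta<\gamma$ to reach Lipschitz, the key obstacle in both passes being the coefficient mismatch $u(\bar x)^{2-p}-u(\bar y)^{2-p}$. The paper organizes the argument slightly differently---it packages the Jensen--Ishii step and the resulting inequality into a single technical lemma (Lemma~\ref{lem:Ishii-Lions lemma}) applied twice, keeps the localization constant $K=8\osc u$ fixed rather than sending it to zero, and uses the equivalence result (via Lemma~\ref{lem:relative jets lemma}) to handle the possibility $\hat t=0$---but the substance is the same.
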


The optimality of $\omega$ in Theorem \ref{thm:lipschitz theorem} is only for convenience: One could choose another modulus $\omega_2$ such that $\lim_{s\rightarrow 0} \omega_2(s) = 0$, repeat the proof using $\omega_2$, and obtain another Lipschitz constant $L_2$ that depends on $\omega_2$. However, analyzing the proof one would find that $L \leq L_2$ since $\omega \leq \omega_2$. Therefore, it is convenient to state the theorem using the optimal modulus, as it is uniquely defined. In addition, using H{\"o}lder results from the weak theory \cite{kuusisu12, kuusilsu12}, we get rid of the modulus dependence altogether.

The local Lipschitz continuity yields the H{\"o}lder continuity of viscosity solutions in time with exponent $1/2$, which we establish in Theorem \ref{thm:holder continuity in time}. Using the previous equivalence result Theorem~\ref{thm:equivalence}, we immediately have the same regularity for weak solutions. 
\begin{corollary}[Weak solutions are Lipschitz in space and $1/2$-H{\"o}lder in time]
\label{cor:lip}
Suppose that $0<m\leq u\leq M$. If $u$ is a weak (or viscosity) solution to (\ref{eq:trudinger}) in $B_{1}\times(-1,0)$, then $u$ is Lipschitz continuous in space and H{\"o}lder continuous in time in $Q_{1/2}$. That is, there is $C>0$ such that 
\begin{equation*}
    |u(x,t) - u(y,s)| \leq C(|x-y|+|t-s|^{1/2})
\end{equation*}
for all $(x,t), (y,s) \in Q_{1/2}$. The constant $C$ depends only on $N$, $p$, $m$ and $M$.
\end{corollary}

Since the equation (\ref{eq:trudinger}) is parabolic, it is natural
to establish the Lipschitz continuity of solution $u$ in $B_{2}\times(-2,0)$
up to the top of the cylinder $Q_{1/2}$. For this, we need the following lemma
that extends the solution property to the set $B_{2}\times\left\{ 0\right\} $.
For equations that are proper in the language of \cite{userguide},
a proof of such a result can be found in \cite[p209]{diehlFrizOberhauser14}.
However, (\ref{eq:trudinger}) is not proper and therefore we rely
on the equivalence result and the comparison principle of weak solutions
to prove the lemma. The technique resembles the one in \cite{juutinen01}.

\sloppy For $(x,t)\in Q_{r}$, we define the \textit{second-order subjet relative
to $Q_{r}$} by
\begin{align*}
\mathcal{P}_{Q_{r}}^{2,-}u(x,t):=\Big\{ & (\theta,\eta,X):u(y,s)\ge u(x,t)+(s-t)\theta+(x-y)\cdot\eta+\frac{1}{2}(x-y)^{\prime}X(x-y)\\
 & +o(\left|x-y\right|^{2}+\left|s-t\right|)\quad\text{as}\quad(y,s)\rightarrow(x,t),(y,s)\in Q_{r}\Big\}
\end{align*}
and $(\theta,\eta,X)$ is in $\overline{\mathcal{P}}_{Q_{r}}^{2,-}u(x,t)$
if there exists $(\theta_{i},\eta_{i},X_{i})\in\mathcal{P}_{Q_{r}}^{2,-}u(x_{i},t_{i})$
such that $(x_{i},t_{i},\theta_{i},\eta_{i},X_{i})$ converges to
$(x,t,\theta,\eta,X)$. The \textit{second-order superjet relative
to $Q_{r}$} is denoted by $\mathcal{P}_{Q_{r}}^{2,+}u(x,t)$ and
defined in the analogous way. Finally, note that these subjets coincide
with the usual definition within the interior of $Q_{r}$. Note that we regard $u$ as continuously extended to the top of the cylinder.

\begin{lem}
\label{lem:relative jets lemma} Let $u$ be a uniformly continuous
viscosity supersolution to (\ref{eq:trudinger}) in $B_{1}\times(-1,0)$
such that $u>m>0$. Suppose that $(\theta,\eta,X)\in\mathcal{\overline{P}}_{Q_{1}}^{2,-}u(x,0)$,
$\eta\not=0$, where $u(x,0)=\lim_{(y,s)\rightarrow(x,0),(y,s)\in Q}u(y,s)$.
Then
\[
(p-1)\theta-u^{2-p}(x,0)(\tr X+(p-2)\frac{\eta^{\prime}X\eta}{\left|\eta\right|^{2}})\geq0.
\]
The analogous result holds for viscosity subsolutions.
\end{lem}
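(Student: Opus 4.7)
The plan is a proof by contradiction using the equivalence established in Theorem \ref{thm:equivalence} together with the comparison principle for positive weak solutions from \cite[Corollary~1]{lindgrenl22}. Suppose the conclusion fails, so that
\[
(p-1)\theta - u^{2-p}(x,0)\left(\tr X + (p-2)\frac{\eta^{\prime} X \eta}{|\eta|^{2}}\right) < -\delta
\]
for some $\delta>0$. By a standard diagonal approximation we may assume $(\theta,\eta,X)\in\mathcal{P}_{Q_1}^{2,-}u(x,0)$ rather than its closure. Using the relative subjet property, I construct a $C^2$ function $\varphi$ on the closed cylinder $K:=\overline{B_r(x)}\times[-r,0]$, $r>0$ small, satisfying $\varphi(x,0)=u(x,0)$ and $\varphi<u$ on $K\setminus\{(x,0)\}$, with $(\partial_s\varphi,D\varphi,D^2\varphi)(x,0)$ arbitrarily close to $(\theta,\eta,X)$. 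Because $\eta\neq 0$ and the failure of the inequality is strict, $\varphi$ can be arranged so that $D\varphi\neq 0$ on $K$, $m/2\leq\varphi\leq 2M$ on $K$, and the strict classical subsolution inequality
\[
(p-1)\varphi^{p-2}\partial_s\varphi-\Delta_p\varphi<0\quad\text{on }K
\]
holds. In particular $\varphi$ is a weak subsolution on the interior of $K$.

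Next, by the strict touching on the compact parabolic boundary $\partial_{\mathcal{P}}K$ there is $\mu>0$ with $\varphi+\mu\leq u$ on $\partial_{\mathcal{P}}K$. The homogeneity of Trudinger's equation gives $\partial_t(h\varphi)^{p-1}-\Delta_p(h\varphi)=h^{p-1}(\partial_t\varphi^{p-1}-\Delta_p\varphi)$, so $h\varphi$ is a weak subsolution for every constant $h>0$. Choosing $h>1$ sufficiently close to $1$ (so that $(h-1)\sup_K\varphi<\mu$), we still have $h\varphi\leq u$ on $\partial_{\mathcal{P}}K$, while $h\varphi(x,0)=hu(x,0)>u(x,0)$. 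Since by Theorem \ref{thm:equivalence} the viscosity supersolution $u$ is also a weak supersolution in $B_1\times(-1,0)$, applying the comparison principle on $K$ to the weak subsolution $h\varphi$ and the weak supersolution $u$ yields $h\varphi\leq u$ throughout $K$, in particular at $(x,0)$, which is the desired contradiction. The subsolution case is analogous, touching from above with $0<h<1$.

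The main obstacle is that the comparison must reach the top face $\{t=0\}$ of $K$, which lies on the parabolic boundary of the original domain where $u$ is accessible only through the relative subjet. This is precisely where the equivalence theorem is essential: the standard viscosity techniques for \emph{proper} equations in \cite{diehlFrizOberhauser14} do not apply because Trudinger's equation is not proper, whereas the weak formulation together with its comparison principle does extend information up to the top of the cylinder by continuity. A secondary technicality is arranging simultaneously the strict lower touching, the condition $D\varphi\neq 0$, and the strict classical subsolution inequality on a single small cylinder; this is achieved by shrinking $r$ using continuity of $u$ and of the data, and by a suitable higher-order correction (of order $|y-x|^{4}+s^{2}$) added to the parabolic test function determined by $(\theta,\eta,X)$.
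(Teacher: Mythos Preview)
Your approach is essentially the same as the paper's: contradiction, construct a strict $C^2$ subsolution $\varphi$ touching $u$ from below at $(x,0)$, multiply by a constant $h>1$ exploiting the homogeneity of Trudinger's equation, and apply the weak comparison principle \cite[Corollary~1]{lindgrenl22} to derive a contradiction at the top of the cylinder. The paper defines $h$ directly as $\inf_{\partial_{\mathcal{P}}Q_\delta} u/\varphi$, whereas you first isolate an additive gap $\mu$ and then pick $h$ close to~$1$; this is a cosmetic difference. One small correction: you invoke Theorem~\ref{thm:equivalence}, but that result is stated for viscosity \emph{solutions}, while here $u$ is only a viscosity \emph{supersolution}; the reference you need is Theorem~\ref{thm:visc is weak} (which is what the paper cites), applied on the small cylinder where $u$ is continuous and hence bounded.
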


\begin{proof}
The idea is that if the claim does not hold, then we can construct a $C^2$ function $\varphi$ that touches $u$ from below at the top of the cylinder, and $\varphi$ has to be a strict subsolution near the point of touching. By lifting this function a little, we can get a contradiction since $u$ satisfies the comparison principle.

To be more precise, by uniform continuity of $u$, it is enough to consider $(\theta, \eta, X) \in \mathcal{P}_{Q_1}^{2, -} u(x, 0)$ instead of the closure. Thriving for a contradiction, suppose that there exists $(\theta,\eta,X)\in \mathcal{P}_{Q_{1}}^{2,-}u(x,0),\eta\not=0$,
such that 
\[
(p-1)\theta-u^{2-p}(x,0)(\tr X+(p-2)\frac{\eta^{\prime}X\eta}{\left|\eta\right|^{2}})<-\varepsilon
\]
for some $\varepsilon>0$. Then there is $\varphi\in C^{2}(Q_{1})$
such that $(\partial_{t}\varphi(x,0),D\varphi(x,0),D^{2}\varphi(x,0))=(\theta,\eta,X)$,
$\varphi(x,0)=u(x,0)$, $\varphi(y,s)\l u(y,s)$ when $(y,s)\in Q_{1}\setminus\{(x,0)\}$,
and
\[
(p-1)\partial_{t}\varphi(x,0)-\varphi^{2-p}(x,0)\Delta_{p}\varphi(x,0)\l-\varepsilon.
\]
Since $\varphi$ is $C^{2}$, it follows from above that there exists small
$\delta>0$ such that 
\[
\partial_{t}\varphi^{p-1}(y,s)-\Delta_{p}\varphi(y,s)<-\varepsilon/2\quad\text{in }B_{\delta}\times(-\delta, 0)
\]
and also $\varphi>0$, $D\varphi\not=0$ in $Q_{\delta}(x,0)$ (since $D\varphi(x,0) = \eta \not= 0$). From this
it follows that $\varphi$ is, in particular, a weak subsolution to
(\ref{eq:trudinger}) in $B_{\delta}\times(-\delta,0)$. Then so is
\[
\tilde{\varphi}(y,s):=h\varphi(y,s),\quad\text{where }h:=\inf_{(y,s)\in\partial_{\mathcal{P}}Q_{\delta}(x,0)}\frac{u(y,s)}{\varphi(y,s)}>1.
\]
On the other hand, by Theorem \ref{thm:visc is weak} $u$ is a local
weak supersolution to (\ref{eq:trudinger}) in $B_{\delta}\times(-\delta,0)$.
Since by construction of $\tilde{\varphi}$ we have $\tilde{\varphi}\leq u$
on $\partial_{\mathcal{P}}Q_{\delta}$, it follows from the comparison
principle \cite[Corollary 1]{lindgrenl22} and continuity
that $\tilde{\varphi}\leq u$ in $Q_{\delta}$, which leads to a contradiction
since $\tilde{\varphi}(x,0)=h\varphi(x,0)=hu(x,0)>u(x,0).$
\end{proof}
To prove Theorem \ref{thm:lipschitz theorem}, we first prove the
following technical lemma, which we will apply twice: first to obtain
H{\"o}lder continuity, and then Lipschitz continuity. This lemma encapsulates the use of Theorem on Sums, and thus makes the full proof cleaner, since then we avoid repeating that part in both the H{\"o}lder and and Lipschitz proofs. 

In the following discussion, we drop the localization terms for expository reasons similar to \cite{anttilamp}.
The idea in the Ishii-Lions method when aiming at 
\begin{align*}
    u(x,t)-u(y,t)\le L\varphi(\left|x-y\right|),
\end{align*}
where  $\varphi$ reflects the desired regularity (H\"older or Lipschitz as detailed below in (\ref{eq:explicit})), is  
thriving for a contradiction to assume that there is a maximum point such that
\begin{align*}
    u(x_0,t_0)-u(y_0,t_0)- L\varphi(\left|x_0-y_0\right|)>c>0.
\end{align*}
Then using the maximum point property, we have information about the derivatives and second derivatives, which we can use to contradict the opposite information coming from the equation in the same way as in the usual uniqueness machinery for viscosity solutions. 

The proof needs to be done in two stages since in the H\"older proof at the maximum point, we can utilize the larger negative curvature of $\varphi$, in other words larger negativite impact of the second derivative to a suitable direction, as done in (\ref{eq:ishii-lions-holder}) and in the computation  after the estimate. 
In the Lipscitz proof the second derivative at the maximum point is not as negative but there (see (\ref{eq:ishii-lions-Lip})) the positive (bad) term depends on the modulus of continuity of the solution, and this is sufficiently small by the previous H\"older result. 

Since the core of the argument is the same in both the stages it is stated separately in the following lemma. It might be instructive to observe that in the proofs of H\"older and Lipschitz continuity, the lemma will be utilized respectively with the functions
\begin{equation}
\label{eq:explicit}
    \varphi(s) := s^{\alpha}\quad \text{and} \quad \varphi (s) := s - \kappa s^\beta,
\end{equation}
where $\alpha, \kappa$ and $\beta$ are suitable constants.
%%%%%%%%%%%%%%%%%%%%%%%
\begin{lem}
\label{lem:Ishii-Lions lemma} Suppose that $u$ is a uniformly continuous
viscosity solution to (\ref{eq:trudinger}) in $B_{1}\times(-1,0)$
and $0<m\leq u\leq M$. Let $(x_{0},t_{0}),(y_{0},t_{0})\in Q_{1/2}$
and define the function
\begin{equation}
\Psi(x,y,t):=u(x,t)-u(y,t)-L\varphi(\left|x-y\right|)-\frac{K}{2}\left|x-x_{0}\right|^{2}-\frac{K}{2}\left|y-y_{0}\right|^{2}-\frac{K}{2}\left|t-t_{0}\right|^{2},\label{eq:lemma blah}
\end{equation}
where $K:=8\osc_{Q_{1}}u$, and $\varphi:[0,2]\rightarrow[0,\infty)$
is a $C^{2}$-function such that
\begin{equation}
\varphi(0)=0,\quad\ensuremath{\left|\varphi^{\prime\prime}(s)\right|<\frac{\varphi^{\prime}(s)}{s}}\quad\text{and}\quad\varphi^{\prime\prime}<0<c_{\varphi}<\varphi^{\prime}\text{\ensuremath{\quad\text{for }\text{some }c_{\varphi}>0.}}\label{eq:Ishii-Lions lemma cnd}
\end{equation}
Then, if $L>L^{\prime}$ for some $L^{\prime}$ that depends only
on $c_{\varphi}$ and $\osc_{Q_{1}}u$, the following holds: If $\Psi$
has a positive maximum at $(\hat{x},\hat{y},\hat{t})\in\overline{B}_{1}\times\overline{B}_{1}\times[-1,0]$,
we have 
\[
-K\leq(L\varphi^{\prime}(\left|z\right|))^{p-2}\left(C^{-2}L\varphi^{\prime\prime}(\left|z\right|)+L\frac{\omega(\left|z\right|)\varphi^{\prime}(\left|z\right|)}{\left|z\right|}+\sqrt{K}\frac{\omega^{1/2}(\left|z\right|)}{\left|z\right|}+K\right),
\]
where $\left|z\right|=\left|\hat{x}-\hat{y}\right|$ and $\omega$
is the optimal modulus of continuity of $u$ in $B_{1}\times(-1,0)$
in space variable. Here $C$ depends only on $N,p,m$ and $M$. Finally,
we have the estimate
\begin{equation}
\varphi^{\prime}(\left|z\right|)\leq\frac{\omega(\left|z\right|)}{L\left|z\right|}.\label{eq:lemma derivative est}
\end{equation}
\end{lem}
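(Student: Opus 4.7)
The plan is to apply the parabolic theorem on sums at the maximum point $(\hat{x},\hat{y},\hat{t})$ of $\Psi$ and then subtract the resulting sub- and supersolution inequalities. Write $z:=\hat{x}-\hat{y}$. Since $\varphi(0)=0$ and the quadratic penalties are non-negative, the positivity of $\Psi(\hat{x},\hat{y},\hat{t})$ forces $\hat{x}\neq\hat{y}$; moreover, the choice $K=8\osc_{Q_{1}}u$ gives
\begin{align*}
\tfrac{K}{2}\bigl(|\hat{x}-x_{0}|^{2}+|\hat{y}-y_{0}|^{2}\bigr)\leq u(\hat{x},\hat{t})-u(\hat{y},\hat{t})\leq \osc_{Q_{1}}u=K/8,
\end{align*}
so $|\hat{x}-x_{0}|,|\hat{y}-y_{0}|<1/2$, and together with $(x_{0},y_{0})\in Q_{1/2}$ this places $\hat{x},\hat{y}$ in the interior of $B_{1}$. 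The time $\hat{t}$ may equal $0$, but Lemma~\ref{lem:relative jets lemma} supplies the solution inequality there through the relative jets $\overline{\mathcal{P}}^{2,\pm}_{Q_{1}}$. Concavity of $\varphi$ with $\varphi(0)=0$ yields $\varphi(|z|)\geq|z|\varphi'(|z|)$; combined with $L\varphi(|z|)\leq u(\hat{x},\hat{t})-u(\hat{y},\hat{t})\leq\omega(|z|)$, this proves \eqref{eq:lemma derivative est}.

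Next, the parabolic theorem on sums produces $(\theta_{1},\eta_{1},X)\in\overline{\mathcal{P}}^{2,+}_{Q_{1}}u(\hat{x},\hat{t})$ and $(\theta_{2},\eta_{2},Y)\in\overline{\mathcal{P}}^{2,-}_{Q_{1}}u(\hat{y},\hat{t})$ with $\theta_{1}-\theta_{2}=K(\hat{t}-t_{0})$,
\begin{align*}
\eta_{1}=L\varphi'(|z|)\tfrac{z}{|z|}+K(\hat{x}-x_{0}), \qquad \eta_{2}=L\varphi'(|z|)\tfrac{z}{|z|}-K(\hat{y}-y_{0}),
\end{align*}
and a matrix inequality of the form
\begin{align*}
\begin{pmatrix} X & 0 \\ 0 & -Y \end{pmatrix} \leq \begin{pmatrix} A & -A \\ -A & A \end{pmatrix}+O(\gamma)+KI_{2N},
\end{align*}
where $A=L\varphi''(|z|)\hat{z}\otimes\hat{z}+L\tfrac{\varphi'(|z|)}{|z|}(I-\hat{z}\otimes\hat{z})$. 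Choosing $L>L'(c_{\varphi},\osc_{Q_{1}}u)$ ensures $|\eta_{i}|\geq L\varphi'(|z|)-K\geq \tfrac{1}{2}Lc_{\varphi}>0$ and $|\eta_{i}|\leq CL\varphi'(|z|)$, so the equation is applicable. Writing Trudinger's equation in the decoupled form used in Lemma~\ref{lem:inf conv is super}, the sub- and supersolution properties read
\begin{align*}
(p-1)\theta_{1}-u^{2-p}(\hat{x},\hat{t})|\eta_{1}|^{p-2}\tr\bigl((I+(p-2)\hat{\eta}_{1}\otimes\hat{\eta}_{1})X\bigr) &\leq 0,\\
(p-1)\theta_{2}-u^{2-p}(\hat{y},\hat{t})|\eta_{2}|^{p-2}\tr\bigl((I+(p-2)\hat{\eta}_{2}\otimes\hat{\eta}_{2})Y\bigr) &\geq 0.
\end{align*}

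Subtracting gives $-(p-1)K\leq(p-1)(\theta_{1}-\theta_{2})\leq\mathcal{D}$, where $\mathcal{D}$ is the difference of the two Pucci-type expressions; this yields the $-K$ on the left side of the claimed inequality. To bound $\mathcal{D}$ from above, I first swap the coefficient $u^{2-p}(\hat{y},\hat{t})$ to $u^{2-p}(\hat{x},\hat{t})$; since $0<m\leq u\leq M$, the mean value theorem gives an error of size $C(m,M)\omega(|z|)|\eta|^{p-2}\|Y\|$, and the matrix bound yields $\|Y\|\lesssim L\varphi'(|z|)/|z|+K$, producing the contribution $(L\varphi'(|z|))^{p-2}\cdot L\omega(|z|)\varphi'(|z|)/|z|$. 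Testing the matrix inequality on the vector $(\hat{z},-\hat{z})$ delivers the key negative curvature $4L\varphi''(|z|)$ in direction $\hat{z}$, and since $\hat{\eta}_{1},\hat{\eta}_{2}$ are aligned with $\hat{z}$ up to an angle of order $K/|\eta|$ (with $|\eta|\gtrsim Lc_{\varphi}$), this passes into $\mathcal{D}$ as $(L\varphi'(|z|))^{p-2}\cdot C^{-2}L\varphi''(|z|)$, the anisotropy $(p-2)\hat{\eta}\otimes\hat{\eta}$ only modifying the constant $C$. The misalignment $|\hat{\eta}_{1}-\hat{\eta}_{2}|=O(K/|\eta|)$ combined with \eqref{eq:lemma derivative est} produces the $\sqrt{K}\,\omega^{1/2}(|z|)/|z|$ cross-term from mixed test vectors in the matrix bound, and the $KI_{2N}$ correction accounts for the final $K$. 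The main obstacle, mirroring the situation in Lemma~\ref{lem:inf conv is super}, is the swap of the $u^{2-p}$ coefficient between the two jet inequalities: reconciling them produces a first-order error whose control requires the modulus $\omega$, which is precisely why the lemma will be iterated, first with $\varphi(s)=s^{\alpha}$ to gain H\"older regularity and then with $\varphi(s)=s-\kappa s^{\beta}$ to upgrade to Lipschitz.
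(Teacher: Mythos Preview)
Your outline is correct and follows the paper's approach: localize the maximum, apply the parabolic theorem on sums, subtract the two viscosity inequalities, and decompose the resulting difference into (i) the negative curvature term from testing the matrix inequality in the $\hat z$ direction, (ii) the $u^{2-p}$--swap error controlled by $\omega(|z|)$, (iii) the $\eta_i\to\tilde\eta$ misalignment error, and (iv) the $KI$ correction. Two small inaccuracies to fix when you write it out: the theorem on sums only gives $\theta_1-\theta_2\geq K(\hat t-t_0)$ (not equality), and your ``$O(\gamma)$'' placeholder hides the crucial choice of the Ishii--Lions parameter $\mu=2L\varphi'(|z|)/|z|$, which is what keeps the $B^2/\mu$ contribution from destroying the $L\varphi''(|z|)$ term.
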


\begin{proof}
First, we must have $\left|z\right|\not=0$ since otherwise the maximum
at $(\hat{x},\hat{y},\hat{t})$ would be non-positive. Secondly, we
have
\begin{align}
0 & <\left|u(\hat{x},\hat{t})-u(\hat{y},\hat{t})\right|-L\varphi(\left|\hat{x}-\hat{y}\right|)-\frac{K}{2}\left|\hat{x}-x_{0}\right|^{2}-\frac{K}{2}\left|\hat{y}-y_{0}\right|^{2}-\frac{K}{2}\left|\hat{t}-t_{0}\right|^{2}\label{eq:lemma whatever}
\end{align}
so that 
\begin{equation}
\left|\hat{t}-t_{0}\right|,\left|\hat{x}-x_{0}\right|,\left|\hat{y}-y_{0}\right|\leq\sqrt{\frac{2}{K}\left|u(\hat{x},\hat{t})-u(\hat{y},\hat{t})\right|}\leq\sqrt{\frac{1}{4}}=\frac{1}{2}.\label{eq:lemma est 11}
\end{equation}
Since $x_{0},y_{0}\in B_{1/2}$ and $t_{0}\in(-1/2,0]$, this implies
$\hat{x},\hat{y}\in B_{1}$ and $\hat{t}\in(-1,0]$, which means that
the maximum point $(\hat{x},\hat{y},\hat{t})$ is in $B_{1}\times B_{1}\times(-1,0]$.

By the definition of $\omega$ and the uniform continuity of $u$, we have
\begin{equation}
\left|u(x,s)-u(y,s)\right|\leq\omega(\left|x-y\right|)\quad\text{for all }(x,s),(y,s)\in B_{1}\times(0,1],\label{eq:3-1}
\end{equation}
Therefore, (\ref{eq:lemma est 11}) implies that 
\begin{align}
K\left|\hat{t}-t_{0}\right|,K\left|\hat{x}-x_{0}\right|,K\left|\hat{y}-y_{0}\right|\leq & \sqrt{K}\omega^{1/2}(\left|\hat{x}-\hat{y}\right|). \label{eq:distest}
\end{align}
We also have by concavity of $\varphi$ and (\ref{eq:lemma whatever})
that $\left|z\right|\varphi^{\prime}(\left|z\right|)\leq\int_{0}^{\left|z\right|}\varphi^{\prime}(s)\d s\leq\frac{\omega(\left|z\right|)}{L}$
so that (\ref{eq:lemma derivative est}) holds. 

Since $\hat{x}\not=\hat{y}$, the function $\phi(x,y)\mapsto\varphi(\left|x-y\right|)$
is $C^{2}$ in a neighborhood of $(\hat{x},\hat{y})$. Therefore,
we may invoke the parabolic Theorem on Sums \cite[Theorem 9]{diehlFrizOberhauser14}
(see also \cite[Theorem 8.3]{userguide}). For any $\mu>0$, there
exist matrices $X,Y\in S^{N}$ and $b_{1},b_{2}\in\mathbb{R}$ such
that 
\[
b_{1}+b_{2}\geq\partial_{t}(L\varphi(\left|x-y\right|))(\hat{x},\hat{y})=0
\]
and
\begin{align*}
(b_{1},D_{x}(L\varphi(\left|x-y\right|))(\hat{x},\hat{y}),X) & \in\overline{\mathcal{P}}_{Q_{1}}^{2,+}(u-\frac{K}{2}\left|x-x_{0}\right|^{2}-\frac{K}{2}\left|t-t_{0}\right|^{2})(\hat{x},\hat{t}),\\
(-b_{2},D_{y}(L\varphi(\left|x-y\right|))(\hat{x},\hat{y}),Y) & \in\overline{\mathcal{P}}_{Q_{1}}^{2,-}(u+\frac{K}{2}\left|y-y_{0}\right|^{2})(\hat{y},\hat{t}).
\end{align*}
Denoting $z:=\hat{x}-\hat{y}$ and
\begin{align}
\eta_{1} & :=L\varphi^{\prime}(\left|z\right|)\frac{z}{\left|z\right|}+K(\hat{x}-x_{0}),\nonumber \\ 
\eta_{2} & :=L\varphi^{\prime}(\left|z\right|)\frac{z}{\left|z\right|}-K(\hat{y}-y_{0}), \label{eq:eta2}\\
\theta_{1} & :=b_{1}+K(\hat{t}-t_{0}),\nonumber \\
\theta_{2} & :=-b_{2} \nonumber,
\end{align}
these can be written as
\begin{align*}
(\theta_{1},\eta_{1},X+KI)\in\overline{\mathcal{P}}_{Q_{1}}^{2,+}u(\hat{x},\hat{t}),\quad & (\theta_{2},\eta_{2},Y-KI)\in\overline{\mathcal{P}}_{Q_{1}}^{2,-}u(\hat{y},\hat{t}).
\end{align*}
Assuming $L$ is large enough depending on $\sqrt{K}$, $\osc_{Q_{1}}u$
and $c_{\varphi}$, and using \eqref{eq:distest}, \eqref{eq:eta2}, we have 
\begin{align}
\left|\eta_{1}\right|,\left|\eta_{2}\right| & \leq L\varphi^{\prime}(\left|z\right|)+\sqrt{K}\omega^{1/2}(\left|\hat{x}-\hat{y}\right|)\leq L\varphi^{\prime}(\left|z\right|)+Lc_{\varphi}\leq2L\varphi^{\prime}(\left|z\right|)\nonumber \\
\left|\eta_{1}\right|,\left|\eta_{2}\right| & \geq\frac{1}{2}L\varphi^{\prime}(\left|z\right|)+\frac{1}{2}Lc_{\varphi}-\sqrt{K}\omega^{1/2}(\left|\hat{x}-\hat{y}\right|)\geq\frac{1}{2}L\varphi^{\prime}(\left|z\right|)\geq1,\label{eq:lemma gradient estimate}
\end{align}
Furthermore, by Theorem on Sums, we have 
\begin{align}
-(\mu+4\left\Vert B\right\Vert ) & I\leq\begin{pmatrix}X & 0\\
0 & -Y
\end{pmatrix}\leq\begin{pmatrix}B & -B\\
-B & B
\end{pmatrix}+\frac{2}{\mu}\begin{pmatrix}B^{2} & -B^{2}\\
-B^{2} & B^{2}
\end{pmatrix},\label{eq:lemma matrix ineq}
\end{align}
where
\begin{align*}
B & =L\varphi^{\prime\prime}(\left|z\right|)\frac{z\otimes z}{\left|z\right|^{2}}+L\frac{\varphi^{\prime}(\left|z\right|)}{\left|z\right|}\left(I-\frac{z\otimes z}{\left|z\right|^{2}}\right),\\
B^{2}=BB & =L^{2}\left|\varphi^{\prime\prime}(\left|z\right|)\right|^{2}\frac{z\otimes z}{\left|z\right|^{2}}+L^{2}\frac{\left|\varphi^{\prime}(\left|z\right|)\right|^{2}}{\left|z\right|^{2}}\left(I-\frac{z\otimes z}{\left|z\right|^{2}}\right).
\end{align*}
By assumptions on $\varphi$, we have
\[
\left|\varphi^{\prime\prime}(\left|z\right|)\right|\leq\varphi^{\prime}(\left|z\right|)/\left|z\right|\quad\text{and}\quad\varphi^{\prime\prime}(\left|z\right|)<0<\varphi^{\prime}(\left|z\right|).
\]
Thus we deduce that 
\begin{equation}
\left\Vert B\right\Vert \leq\frac{L\varphi^{\prime}(\left|z\right|)}{\left|z\right|}\quad\text{and}\quad\left\Vert B^{2}\right\Vert \leq L^{2}\frac{(\varphi^{\prime}(\left|z\right|))^{2}}{\left|z\right|^{2}}.\label{eq:lemma B est}
\end{equation}
Moreover, we choose
\[
\mu:=\frac{2L\varphi^{\prime}(\left|z\right|)}{\left|z\right|},
\]
so that using the explicit formulae of $B$ and $B^2$, and the fact that the terms with $\varphi^\prime$ vanish, we obtain
\begin{equation}
\left\langle B\frac{z}{\left|z\right|},\frac{z}{\left|z\right|}\right\rangle +\frac{2}{\mu}\left\langle B^{2}\frac{z}{\left|z\right|},\frac{z}{\left|z\right|}\right\rangle \leq\frac{L}{2}\varphi^{\prime\prime}(\left|z\right|).\label{eq:lemma est 33}
\end{equation}
Denote below
\begin{align*}
F((z,s),\zeta,Z) & :=u^{2-p}(z,s)\left|\zeta\right|^{p-2}(\tr Z+(p-2)\frac{\zeta^{\prime}Z\zeta}{\left|\zeta\right|^{2}}).
\end{align*}
Since $u$ is a viscosity solution to (\ref{eq:trudinger}), we have
(using also Lemma \ref{lem:relative jets lemma} if $\hat{t}=0$)
\[
\theta_{1}\leq F((\hat{x},\hat{t}),\eta_{1},X+KI)\quad\text{and\ensuremath{\quad}}\theta_{2}\geq F((\hat{y},\hat{t}),\eta_{2},Y-KI).
\]
We denote $\tilde{\eta}:=L\varphi^{\prime}(\left|z\right|)z/\left|z\right|$.
Then we subtract the equations and use that $\theta_{1}-\theta_{2}=b_{1}+b_{2}+K(\hat{t}-t_{0})\geq K(\hat{t}-t_{0})$ as well as add and subtract terms to obtain 
\begin{align*}
K(\hat{t}-t_{0}) & \leq F((\hat{x},\hat{t}),\eta_{1},X+KI)-F((\hat{y},\hat{t}),\eta_{2},Y-KI)\\
 & =F((\hat{x},\hat{t}),\eta_{1},X)-F((\hat{y},\hat{t}),\eta_{2},Y)\\
 & \ \ \ +F((\hat{x},\hat{t}),\eta_{1},X+KI)-F((\hat{x},\hat{t}),\eta_{1},X)-F((\hat{y},\hat{t}),\eta_{2},Y-KI)+F((\hat{y},\hat{t}),\eta_{2},Y)\\
 & =F((\hat{x},\hat{t}),\tilde{\eta},X)-F((\hat{y},\hat{t}),\tilde{\eta},Y)\\
 & \ \ \ +F((\hat{x},\hat{t}),\eta_{1},X)-F((\hat{x},\hat{t}),\tilde{\eta},X)+F((\hat{y},\hat{t}),\tilde{\eta},Y)-F((\hat{y},\hat{t}),\eta_{2},Y)\\
 & \ \ \ +F((\hat{x},\hat{t}),\eta_{1},X+KI)-F((\hat{x},\hat{t}),\eta_{1},X)-F((\hat{y},\hat{t}),\eta_{2},Y-KI)+F((\hat{y},\hat{t}),\eta_{2},Y)\\
 & =:T_{1}+T_{2}+T_{3}.
\end{align*}

\textbf{Estimate of $T_{1}$:} By the definition of $\tilde{\eta}$,
the matrix inequality (\ref{eq:lemma matrix ineq}), and estimate (\ref{eq:lemma est 33}),
we have
\begin{align*}
\frac{\tilde{\eta}^{\prime}}{\left|\tilde{\eta}\right|}(X-Y)\frac{\tilde{\eta}}{\left|\tilde{\eta}\right|}=\frac{z^{\prime}}{\left|z\right|}(X-Y)\frac{z}{\left|z\right|} & =\begin{pmatrix}z/\left|z\right|\\
-z/\left|z\right|
\end{pmatrix}^{\prime}\begin{pmatrix}X & 0\\
0 & -Y
\end{pmatrix}\begin{pmatrix}z/\left|z\right|\\
-z/\left|z\right|
\end{pmatrix}\\
 & \leq\begin{pmatrix}z/\left|z\right|\\
-z/\left|z\right|
\end{pmatrix}^{\prime}\left(\begin{pmatrix}B & -B\\
-B & B
\end{pmatrix}+\frac{2}{\mu}\begin{pmatrix}B^{2} & -B^{2}\\
-B^{2} & B^{2}
\end{pmatrix}\right)\begin{pmatrix}z/\left|z\right|\\
-z/\left|z\right|
\end{pmatrix}\\
 & =2\left\langle B\frac{z}{\left|z\right|},\frac{z}{\left|z\right|}\right\rangle +\frac{4}{\mu}\left\langle B^{2}\frac{z}{\left|z\right|},\frac{z}{\left|z\right|}\right\rangle \\
 & \leq L\varphi^{\prime\prime}(\left|z\right|).
\end{align*}
Furthermore, (\ref{eq:lemma matrix ineq}) readily implies $X-Y\leq0$ by multiplying \eqref{eq:lemma matrix ineq} by $(\xi, \xi)$ left and right.
In particular, this implies $\tr(X-Y)\leq0$. Moreover, (\ref{eq:lemma matrix ineq})
also gives that
\begin{equation}
\left\Vert X\right\Vert ,\left\Vert Y\right\Vert \leq\left\Vert B\right\Vert +\frac{2}{\mu}\left\Vert B\right\Vert ^{2}\leq C\frac{L\varphi^{\prime}(\left|z\right|)}{\left|z\right|}.\label{eq:matrix norms}
\end{equation}
Using the previous two estimates and that $\varphi^{\prime\prime}<0$, it follows that
\begin{align*}
T_{1} & =F((\hat{x},\hat{t}),\tilde{\eta},X)-F((\hat{y},\hat{t}),\tilde{\eta},Y)\\
 & =u(\hat{x},\hat{t})^{2-p}\left|\tilde{\eta}\right|^{p-2}\Big(\tr X+(p-2)\frac{\tilde{\eta}^{\prime}X\tilde{\eta}}{\left|\tilde{\eta}\right|^{2}}\Big)-u(\hat{y},\hat{t})^{2-p}\left|\tilde{\eta}\right|^{p-2}\Big(\tr Y+(p-2)\frac{\tilde{\eta}^\prime Y\tilde{\eta}}{\left|\tilde{\eta}\right|^{2}}\Big)\\
 & =\big(u^{2-p}(\hat{x},\hat{t})-u^{2-p}(\hat{y},\hat{t})\big)\left|\tilde{\eta}\right|^{p-2}\Big(\tr X+(p-2)\frac{\tilde{\eta}^{\prime}X\tilde{\eta}}{\left|\tilde{\eta}\right|^{2}}\Big)\\
 & \ \ \ +u(\hat{y},\hat{t})^{2-p}\left|\tilde{\eta}\right|^{p-2}\Big(\tr(X-Y)+(p-2)\frac{\tilde{\eta}^\prime(X-Y)\tilde{\eta}}{\left|\tilde{\eta}\right|^{2}}\Big)\\
 & \leq C(N,p,m,M)\left|u(\hat{x},\hat{t})-u(\hat{y},\hat{t})\right|\left|\tilde{\eta}\right|^{p-2}\left\Vert X\right\Vert +C(p)u(\hat{y},\hat{t})^{2-p}\left|\tilde{\eta}\right|^{p-2}\frac{\tilde{\eta}^\prime(X-Y)\tilde{\eta}}{\left|\tilde{\eta}\right|^{2}}\\
 & \leq C(N,p,m,M)\omega(\left|z\right|)\left|\tilde{\eta}\right|^{p-2}L\frac{\varphi^{\prime}(\left|z\right|)}{\left|z\right|}+C^{-1}(N,p,m,M)L\left|\tilde{\eta}\right|^{p-2}\varphi^{\prime\prime}(\left|z\right|),
\end{align*}
for some constant $C(N,p,m,M)\geq1$.

\textbf{Estimate of $T_{2}$:} First we show the following algebraic
inequality
\begin{align}
 & \left|\left|\xi_{1}\right|^{p-2}(\tr Z+(p-2)\left|\xi_{1}\right|^{-2}\xi_{1}^{\prime}Z\xi_{1})-\left|\xi_{2}\right|^{p-2}(\tr Z+(p-2)\left|\xi_{2}\right|^{-2}\xi_{2}^{\prime}Z\xi_{2})\right|\nonumber \\
 & \ \ \ \leq C(N,p)\max(\left|\xi_{1}\right|^{p-3},\left|\xi_{2}\right|^{p-3})\left|\xi_{1}-\xi_{2}\right|\left\Vert Z\right\Vert \label{eq:ineq}
\end{align}
for all $\xi_{1},\xi_{2}\in\mathbb{R}^{N}\setminus\left\{ 0\right\} $
and $Z\in S(N)$. To this end, we observe that by \cite{theobald75}
\begin{align*}
\left|\xi_{1}^{\prime}Z\xi_{1}-\xi_{2}Z\xi_{2}\right| & =\left|\tr((\xi_{1}\otimes\xi_{1}-\xi_{2}\otimes\xi_{2})Z)\right|\\
 & \leq N\left\Vert \xi_{1}\otimes\xi_{1}-\xi_{2}\otimes\xi_{2}\right\Vert \left\Vert Z\right\Vert \\
 & =N\left\Vert (\xi_{1}-\xi_{2})\otimes\xi_{1}-\xi_{2}\otimes(\xi_{2}-\xi_{1})\right\Vert \left\Vert Z\right\Vert \\
 & \leq N(\left|\xi_{1}\right|+\left|\xi_{2}\right|)\left\Vert \xi_{1}-\xi_{2}\right\Vert \left\Vert Z\right\Vert .
\end{align*}
We also use the elementary inequality
\[
\left|a^{q}-b^{q}\right|\leq \abs q\max(a^{q-1},b^{q-1})\left|a-b\right|,\quad a,b>0\quad\text{and}\quad q\in\mathbb{R}.
\]
Furthermore, may assume that $\left|\xi_{2}\right|\leq\left|\xi_{1}\right|$
by changing notation, if necessary. Using these, we estimate
\begin{align*}
 & \left|\left|\xi_{1}\right|^{p-2}-\left|\xi_{2}\right|^{p-2}\right|\left|\tr Z\right|+(p-2)\left|\xi_{1}\right|^{p-4}\left|\xi_{1}^{\prime}Z\xi_{1}-\xi_{2}^{\prime}Z\xi_{2}\right|+(p-2)\left|\left|\xi_{1}\right|^{p-4}-\left|\xi_{2}\right|^{p-4}\right|\left|\xi_{2}^{\prime}Z\xi_{2}\right|\\
 & \leq N\left|p-2\right|\max(\left|\xi_{1}\right|^{p-3},\left|\xi_{2}\right|^{p-3})\left|\xi_{1}-\xi_{2}\right|\left\Vert Z\right\Vert +N\left|p-2\right|\left|\xi_{1}\right|^{p-4}(\left|\xi_{1}\right|+\left|\xi_{2}\right|)\left\Vert \xi_{1}-\xi_{2}\right\Vert \left\Vert Z\right\Vert \\
 & \ \ \ +\left|p-2\right|\left|p-4\right|\max(\left|\xi_{1}\right|^{p-5},\left|\xi_{2}\right|^{p-5})\left|\xi_{1}-\xi_{2}\right|\left|\xi_{2}\right|^{2}\left\Vert Z\right\Vert \\
 & \leq C(N,p)\left|p-2\right|\max(\left|\xi_{1}\right|^{p-3},\left|\xi_{2}\right|^{p-3})\left|\xi_1-\xi_{2}\right|\left\Vert Z\right\Vert ,
\end{align*}
which proves the algebraic inequality (\ref{eq:ineq}). Observe now that
\begin{equation}
\left|\eta_{1}-\tilde{\eta}\right|+\left|\eta_{2}-\tilde{\eta}\right|=K(\left|x_{0}-\hat{x}\right|+\left|y_{0}-\hat{y}\right|)\leq2\sqrt{K}\omega^{1/2}(\left|z\right|).\label{eq:lemma gradient tilde est}
\end{equation}
Therefore, using (\ref{eq:ineq}), we obtain
\begin{align*}
T_{2} & =F((\hat{x},\hat{t}),\eta_{1},X)-F((\hat{x},\hat{t}),\tilde{\eta},X)+F((\hat{y},\hat{t}),\tilde{\eta},Y)-F((\hat{y},\hat{t}),\eta_{2},Y)\\
 & =u(\hat{x},\hat{t})^{2-p}|\eta_1|^{p-2}\tr(X-(p-2)\frac{\eta_{1}^{\prime}X\eta_{1}}{|\eta_{1}|^2})-u(\hat{x},\hat{t})^{2-p}|\tilde{\eta}|^{p-2}\tr(X-(p-2)\frac{\tilde{\eta}^{\prime}X\tilde{\eta}}{|\tilde{\eta}|^{2}})\\
 & \ \ \ +u(\hat{y},\hat{t})^{2-p}|\tilde{\eta}|^{p-2}\tr(Y-(p-2)\frac{\tilde{\eta}^{\prime}Y\tilde{\eta}}{\left|\tilde{\eta}\right|^{2}})-u(\hat{y},\hat{t})^{2-p}|\eta_2|^{p-2}\tr(Y-(p-2)\frac{\eta_{2}^{\prime}Y\eta_{2}}{\left|\eta_{2}\right|^{2}})\\
 & \leq C(N,p,m,M)\max(\left|\eta_{1}\right|^{p-3},\left|\eta_{2}\right|^{p-3},\left|\tilde{\eta}\right|^{p-3})(\left\Vert X\right\Vert +\left\Vert Y\right\Vert )(\left|\eta_{1}-\tilde{\eta}\right|+\left|\eta_{2}-\tilde{\eta}\right|)\\
 & \leq C(N,p,m,M)\left|\tilde{\eta}\right|^{p-3}\sqrt{K}\omega^{1/2}(\left|z\right|)L\frac{\varphi^{\prime}(\left|z\right|)}{\left|z\right|}
\end{align*}
where in the last estimate we used (\ref{eq:matrix norms}), (\ref{eq:lemma gradient tilde est})
and that (\ref{eq:lemma gradient estimate}) implies $\left|\eta_{1}\right|,\left|\eta_{2}\right|\in[\frac{1}{2}\left|\tilde{\eta}\right|,2\left|\tilde{\eta}\right|]$.

\textbf{Estimate of $T_{3}$:} Since $F$ is quasilinear, we have
\begin{align*}
T_{3} & =F((\hat{x},\hat{t}),\eta_{1},X+KI)-F((\hat{x},\hat{t}),\eta_{1},X)-F((\hat{y},\hat{t}),\eta_{2},Y-KI)+F((\hat{y},\hat{t}),\eta_{2},Y)\\
 & =F((\hat{x},\hat{t}),\eta_{1},KI)+F((\hat{y},\hat{t}),\eta_{2},KI)\\
 & \leq C(N,p)K(\left|\eta_{1}\right|^{p-2}+\left|\eta_{2}\right|^{p-2})\\
 & \leq C(N,p)K\left|\tilde{\eta}\right|^{p-2},
\end{align*}
where we also used (\ref{eq:lemma gradient estimate}) and the definition
of $\tilde{\eta}$.

Combining the estimates of $T_{1},T_{2},T_{3}$ and using (\ref{eq:lemma gradient estimate})
we have for some $C(N,p,m,M)\geq1$ that

\begin{align*}
 & -K\\
 & \ \leq C^{-1}L\left|\tilde{\eta}\right|^{p-2}\varphi^{\prime\prime}(\left|z\right|)+C\omega(\left|z\right|)\left|\tilde{\eta}\right|^{p-2}L\frac{\varphi^{\prime}(\left|z\right|)}{\left|z\right|}+\left|\tilde{\eta}\right|^{p-3}\sqrt{K}\omega^{1/2}(\left|z\right|)L\frac{\varphi^{\prime}(\left|z\right|)}{\left|z\right|}+CK\left|\tilde{\eta}\right|^{p-2}\\
 & \ =C(L\varphi^{\prime}(\left|z\right|))^{p-2}\left(C^{-2}L\varphi^{\prime\prime}(\left|z\right|)+L\frac{\omega(\left|z\right|)\varphi^{\prime}(\left|z\right|)}{\left|z\right|}+\sqrt{K}\frac{\omega^{1/2}(\left|z\right|)}{\left|z\right|}+K\right),
\end{align*}
where we also used the defintion of $\tilde{\eta}$. This yields the
desired inequality.
\end{proof}

The next lemma gives $\alpha$-H\"older continuity for any $\alpha \in (0,1)$.

\begin{lem}[H{\"o}lder continuity in space]
 \label{lem:H=0000F6lder in space} Let $u$ be a uniformly continuous
viscosity solution to (\ref{eq:trudinger}) in $B_{2}\times(-2,0)$.
Suppose that $0<m\leq u\leq M$. Then for any $\alpha \in (0,1)$, there exists a constant
$C_{H}>0$ such that
\[
\left|u(x_{0},t_{0})-u(y_{0},t_{0})\right|\leq C_{H}\left|x_{0}-y_{0}\right|^{\alpha}\quad\text{for all }(x_{0},t_{0}),(y_{0},t_{0})\in Q_{1}.
\]
The constant $C_{H}$
depends only on $N,p,m,\alpha$ and the optimal modulus of continuity of
$u$ in $B_{2}\times(-2,0)$.
\end{lem}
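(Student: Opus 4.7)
The plan is to apply the preceding Ishii--Lions Lemma~\ref{lem:Ishii-Lions lemma} with the specific choice $\varphi(s)=s^{\alpha}$, where $\alpha:=\tfrac{1}{4}\min(p-2,0)+\tfrac{1}{2}\in(0,1)$, and to argue by contradiction. For this $\varphi$ we have $\varphi(0)=0$, $\varphi''(s)=-\alpha(1-\alpha)s^{\alpha-2}<0$, $\varphi'(s)=\alpha s^{\alpha-1}\ge\alpha 2^{\alpha-1}=:c_\varphi>0$ on $[0,2]$, and $|\varphi''(s)|=\alpha(1-\alpha)s^{\alpha-2}<\alpha s^{\alpha-2}=\varphi'(s)/s$, so the structural hypotheses of Lemma~\ref{lem:Ishii-Lions lemma} are met.

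Fix $(x_0,t_0),(y_0,t_0)\in Q_1$ and suppose for contradiction that $u(x_0,t_0)-u(y_0,t_0)>L\,|x_0-y_0|^{\alpha}$ for some large constant $L$ to be determined; then $\Psi(x_0,y_0,t_0)>0$ and, since $\overline{B}_1\times\overline{B}_1\times[-1,0]$ is compact and the candidate for the argument of the sup lies strictly inside by (\ref{eq:lemma est 11}), $\Psi$ attains a positive maximum at some interior point $(\hat x,\hat y,\hat t)$ with $|z|:=|\hat x-\hat y|>0$. Lemma~\ref{lem:Ishii-Lions lemma} then yields
\[
-K\le C(L\alpha)^{p-2}|z|^{(\alpha-1)(p-2)}\Bigl(-C^{-2}L\alpha(1-\alpha)|z|^{\alpha-2}+L\alpha\omega(|z|)|z|^{\alpha-2}+\sqrt{K}\tfrac{\omega^{1/2}(|z|)}{|z|}+K\Bigr),
\]
while the gradient estimate (\ref{eq:lemma derivative est}) gives $L\alpha|z|^{\alpha}\le\omega(|z|)\le\osc_{Q_1}u$, forcing $|z|\le(\osc_{Q_1}u/(L\alpha))^{1/\alpha}\to 0$ as $L\to\infty$.

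The core of the argument is therefore to show that, for $L$ sufficiently large depending only on $N,p,m,M$ and $\omega$, the parenthesized expression is so negative that the right-hand side drops below $-K$, a contradiction. First, by uniform continuity of $u$, $\omega(|z|)\to 0$ as $|z|\to 0$, so once $|z|$ is small enough that $\omega(|z|)\le(1-\alpha)/(2C^2)$ the second term is absorbed into half of the first: the two leading terms combine to at most $-\tfrac{1}{2}C^{-2}L\alpha(1-\alpha)|z|^{\alpha-2}$. The remaining positive terms $\sqrt{K}\omega^{1/2}(|z|)/|z|$ and $K$ are lower order in $|z|$ (as $\alpha-2<-1<0$) and, combined with the outer factor $(L\alpha)^{p-2}|z|^{(\alpha-1)(p-2)}$, produce a contribution whose overall $|z|$-exponent is $(\alpha-1)(p-2)+\alpha-2=p(\alpha-1)-\alpha$; the specific choice of $\alpha$ makes this negative in both regimes, so the dominant negative term wins once $L$ is large enough. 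Finally, choosing $L=:C_H$ large depending on $N,p,m,M$ and $\omega$ to realize all three smallness requirements closes the contradiction, and hence yields the claimed H\"older estimate with the stated exponent.

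The main obstacle is the subquadratic range $1<p<2$: there the prefactor $(L\varphi'(|z|))^{p-2}$ is \emph{decreasing} in $L\varphi'(|z|)$, so the leverage on $-C^{-2}L\varphi''(|z|)$ is weakened and one cannot take $\alpha$ close to $1$. The tailored value $\alpha=p/4$ (i.e.\ $\tfrac{1}{2}+\tfrac{p-2}{4}$) is chosen precisely to keep $p(\alpha-1)-\alpha<0$ and ensure that the negative $|z|^{\alpha-2}$-term still dominates the constants $\sqrt K\omega^{1/2}(|z|)/|z|$ and $K$ after being multiplied by the possibly small prefactor.
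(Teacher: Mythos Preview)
Your approach is the same as the paper's and the core argument is correct: apply Lemma~\ref{lem:Ishii-Lions lemma} with $\varphi(s)=s^{\alpha}$, use the derivative estimate \eqref{eq:lemma derivative est} to force $|z|$ (and hence $\omega(|z|)$) small, absorb the $\omega(|z|)$-term into the negative $\varphi''$-term, and then let the remaining negative term of order $|z|^{\alpha-2}$ overwhelm the constants and the $|z|^{-1}$-term.

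Two small corrections. First, Lemma~\ref{lem:Ishii-Lions lemma} is stated for a solution in $B_{1}\times(-1,0)$ and centres $(x_{0},t_{0}),(y_{0},t_{0})\in Q_{1/2}$; you invoke it directly for $(x_{0},t_{0}),(y_{0},t_{0})\in Q_{1}$. The paper first proves the estimate on $Q_{1/2}$ and then covers $Q_{1}$ by translating and repeating the argument (using that $u$ solves the equation in the larger domain $B_{2}\times(-2,0)$); you should add this step. Second, your final paragraph misidentifies the role of the specific exponent: the quantity $p(\alpha-1)-\alpha=\alpha(p-1)-p$ is negative for \emph{every} $\alpha\in(0,1)$ and every $p>1$, so the Ishii--Lions argument here goes through for any such $\alpha$; the particular value $\alpha=\tfrac14\min(p-2,0)+\tfrac12$ is merely a convenient explicit choice, not a constraint forced by the subquadratic regime.
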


\begin{proof}
Following the notation in Lemma \ref{lem:Ishii-Lions lemma}, we fix
$K:=8\osc_{Q_{1}}u$. We also fix $(x_{0},t_{0}),(y_{0},t_{0})\in Q_{1/2}$
and consider the auxiliary function
\[
\Psi(x,y,t):=u(x,t)-u(y,t)-C_{H}\varphi(\left|x-y\right|)-\frac{K}{2}\left|x-x_{0}\right|^{2}-\frac{K}{2}\left|y-y_{0}\right|^{2}-\frac{K}{2}\left|t-t_{0}\right|^{2}
\]
for some large $C_{H}>0$ to be selected later. We define $\varphi:[0,2]\rightarrow[0,\infty)$
by $\varphi(s):=s^{\alpha}$, where $\alpha\in(0,1)$. We have
\[
\varphi^{\prime\prime}(s)=\alpha(\alpha-1)s^{\alpha-2}<0<\alpha2^{\alpha-1}<\varphi^{\prime}(s)\quad\text{and\ensuremath{\quad\left|\varphi^{\prime\prime}(s)\right|\leq\frac{\varphi^{\prime}(s)}{s}}}
\]
so that condition (\ref{eq:Ishii-Lions lemma cnd}) in Lemma \ref{lem:Ishii-Lions lemma}
holds. It now suffices to prove that $\Psi$ is non-positive, so suppose
on the contrary that $\Psi$ has a positive maximum at $(\hat{x},\hat{y},\hat{t})\in\overline{B}_{1}\times\overline{B}_{1}\times[-1,1]$.
Provided that $C_{H}\geq C(\alpha,\osc_{Q_{1}}u)$, which we now assume,
we may apply Lemma \ref{lem:Ishii-Lions lemma}. There exists $C_{0}$,
depending only on $N,p,m$, and $M$, such that
\begin{align}
\label{eq:ishii-lions-holder}
 & -K\nonumber \\
 & \leq(C_{H}\varphi^{\prime}(\left|z\right|))^{p-2}\left(C_{0}^{-2}C_{H}\varphi^{\prime\prime}(\left|z\right|)+C_{H}\omega(\left|z\right|)\frac{\varphi^{\prime}(\left|z\right|)}{\left|z\right|}+\sqrt{K}\frac{\omega^{1/2}(\left|z\right|)}{\left|z\right|}+K\right)\\
 & =(\varphi^{\prime}(\left|z\right|)C_{H})^{p-2}\left(-C_{0}^{-2}C_{H}\alpha\left|\alpha-1\right|\left|z\right|^{\alpha-2}+\alpha C_{H}\omega(\left|z\right|)\left|z\right|^{\alpha-2}+\sqrt{K}\frac{\omega^{1/2}(\left|z\right|)}{\left|z\right|}+K\right)\nonumber,
\end{align}
where $\omega$ is the optimal modulus of continuity of $u$ in $Q_{1}$
in space variable and $z=\hat{x}-\hat{y}$ for some $(\hat{x},\hat{t}),(\hat{y},\hat{t})\in B_{1}\times(-1,0)$.
By the estimate (\ref{eq:lemma derivative est}) we have 
\[
\omega(\left|z\right|)\leq\omega\left(\frac{\omega(\left|z\right|)}{\varphi^{\prime}(\left|z\right|)C_{H}}\right)\leq\omega\left(\frac{\osc_{Q}u}{\alpha2^{\alpha-1}C_{H}}\right).
\]
Then by assuming that $C_{H}$ is even larger, depending on $\osc_{Q_{1}}u$,
$\alpha$, $C_{0}$, and $\omega$, we may ensure that 
\[
\omega(\left|z\right|)\leq C_{0}^{-2}\frac{\left|\alpha-1\right|}{2}.
\]
Therefore, continuing the estimate, we have
\begin{alignat*}{1}
 & -K\\
 & \ \leq(\varphi^{\prime}(\left|z\right|)C_{H})^{p-2}\left(-C_{0}^{-2}C_{H}\alpha\left|\alpha-1\right|\left|z\right|^{\alpha-2}+C_{0}^{-2}\alpha\left|\alpha-1\right|C_{H}\left|z\right|^{\alpha-2}+\sqrt{K}\frac{\omega^{1/2}(\left|z\right|)}{\left|z\right|}+K\right)\\
 & \ =(\varphi^{\prime}(\left|z\right|)C_{H})^{p-2}\left(-\frac{C_{0}^{-2}}{2}C_{H}\alpha\left|\alpha-1\right|\left|z\right|^{\alpha-2}+\sqrt{K}\frac{(\osc_{Q_1}u)^{1/2}}{\left|z\right|}+K\right)\\
 & \ \leq(\varphi^{\prime}(\left|z\right|)C_{H})^{p-2}\left|z\right|^{\alpha-2}\left(-\frac{C_{0}^{-2}\alpha\left|\alpha-1\right|}{2}C_{H}+C\sqrt{K}(\osc_{Q_1}u)^{1/2}+CK\right),
\end{alignat*}
where in the last estimate we used that $\left|z\right|^{-1}\leq C\left|z\right|^{\alpha-2}$
since $\left|z\right|<2$ and $\alpha-2\in(-2,-1)$. Then we take
$C_{H}$ so large that $C_{H}\geq C\frac{4C_{0}^{2}}{\alpha(\alpha-1)}(\sqrt{K}(\osc_{Q} u)^{1/2}+K)$
so that by the above display (using also (\ref{eq:lemma derivative est})
to estimate $\left|z\right|^{\alpha-2}$), we have
\begin{align}
-K & \leq-(\varphi^{\prime}(\left|z\right|)C_{H})^{p-2}\left|z\right|^{\alpha-2}\frac{C_{0}^{-2}\alpha\left|\alpha-1\right|}{4}C_{H}\nonumber \\
 & \leq-\frac{C_{0}^{-2}\alpha\left|\alpha-1\right|}{4}(\varphi^{\prime}(\left|z\right|)C_{H})^{p-2}\frac{\omega^{\alpha-2}(\left|z\right|)}{C_{H}^{\alpha-2}\varphi^{\prime}(\left|z\right|)^{\alpha-2}}C_{H}\nonumber \\
 & \leq-\frac{C_{0}^{-2}\alpha\left|\alpha-1\right|}{4}(\varphi^{\prime}(\left|z\right|)C_{H})^{p-\alpha}(\osc_{Q}u)^{\alpha-2}C_{H},\label{eq:ishii lions 11}
\end{align}
where $p-\alpha>0$. Therefore, since $\varphi^{\prime}(\left|z\right|)>\alpha2^{\alpha-1}$,
taking large enough $C_{H}$ in (\ref{eq:ishii lions 11}) yields
a contradiction. Thus $\Psi$ is non-positive, which
by the definition of $\varphi$ yields
\[
u(x_{0},t_{0})-u(y_{0},t_{0})\leq C_{H}\varphi(\left|x_{0}-y_{0}\right|)\leq C_{H}\left|x_{0}-y_{0}\right|^{\alpha}.
\]
Since $x_{0},y_{0}$ and $t_{0}$ were arbitrary, $u$ is H{\"o}lder continuous
in space in $Q_{1/2}$. Repeating the argument, we see that $u$ is
H{\"o}lder continuous in $Q_{1}$.
\end{proof}
We can now finish the proof of Theorem \ref{thm:lipschitz theorem}. In its proof, we apply Lemma \ref{lem:H=0000F6lder in space} with $\alpha = 1/2$, but any $\alpha \in (0,1)$ would suffice.
\begin{proof}[Proof of Theorem \ref{thm:lipschitz theorem}]
 By Lemma \ref{lem:H=0000F6lder in space}, the function $u$ is H{\"o}lder
continuous in particular in $Q_{1}$. Therefore, denoting by $\omega$
the optimal modulus of continuity of $u$ in $Q_{1}$ in space variable,
we have
\begin{equation}
\left|u(x,t)-u(y,t)\right|\leq\omega(\left|x-y\right|)\leq C_{H}\left|x-y\right|^{\alpha}\quad\text{for all }(x,t),(y,t)\in Q_{1},\label{eq:modulus form}
\end{equation}
where $\alpha:=1/2$ and $C_{H}$ depends
only on $N,p,m$, and the optimal modulus of continuity of $u$ in
$B_{2}\times(-2,0)$.

We define $\varphi:[0,2]\rightarrow[0,\infty)$ by $\varphi(s):=s-\kappa s^{\beta},$
where $\beta:=\frac{\alpha}{2}+1$ and $\kappa:=\beta^{-1}2^{-\beta-1}$.
Then
\begin{align*}
\varphi^{\prime}(s) & =1-2^{-\beta-1}s^{\beta-1}\quad\text{and}\quad\varphi^{\prime\prime}(s)=-2^{-\beta-1}(\beta-1)s^{\beta-2}
\end{align*}
so that, since $\beta\in(1,2)$, we have for $s\in(0,2)$
\begin{align*}
\left|\varphi^{\prime\prime}(s)\right|-\frac{\left|\varphi^{\prime}(s)\right|}{s} & =2^{-\beta-1}(\beta-1)s^{\beta-2}-\frac{1-2^{-\beta-1}s^{\beta-1}}{s}\\
 & =2^{-\beta-1}\beta s^{\beta-2}-s^{-1}\\
 & =s^{-1}(2^{-\beta-1}\beta s^{\beta-1}-1)\\
 & \leq s^{-1}(2^{-2}\beta-1)<0
\end{align*}
and thus the requirement (\ref{eq:Ishii-Lions lemma cnd}) of Lemma
\ref{lem:Ishii-Lions lemma} holds. We fix $K:=8\osc_{Q_{1}}u$ and
$(x_{0},t_{0}),(y_{0},t_{0})\in B_{1/2}\times(-1/2,1/2)$ and define
\[
\Psi(x,y,t):=u(x,t)-u(y,t)-L\varphi(\left|x-y\right|)-\frac{K}{2}\left|x-x_{0}\right|^{2}-\frac{K}{2}\left|y-y_{0}\right|^{2}-\frac{K}{2}\left|t-t_{0}\right|^{2}.
\]
It now suffices to prove that $\Psi$ is non-positive. Suppose on
the contrary that $\Psi$ has a positive maximum at $(\hat{x},\hat{y},\hat{t})\in\overline{B}_{1}\times\overline{B}_{1}\times[-1,1]$.
Then, assuming that $L\geq C(\alpha,\osc_{Q}u)$ it follows from Lemma
\ref{lem:Ishii-Lions lemma} that
\begin{align}
\label{eq:ishii-lions-Lip}
-K\leq & (L\varphi^{\prime}(\left|z\right|))^{p-2}\left(C_{0}^{-2}L\varphi^{\prime\prime}(\left|z\right|)+L\frac{\omega(\left|z\right|)\varphi^{\prime}(\left|z\right|)}{\left|z\right|}+\sqrt{K}\frac{\omega^{1/2}(\left|z\right|)}{\left|z\right|}+K\right)
\end{align}
for some $C_{0}$ that depends only on $N,p,m$ and $M$, and $z=\hat{x}-\hat{y}$,
for some $(\hat{x},\hat{t}),(\hat{y},\hat{t})\in Q_{1}$. We estimate
the right-hand side using (\ref{eq:modulus form}), definition of
$\beta$, formula of $\varphi^{\prime\prime}$ and the fact that $\varphi^{\prime}\in[3/4,1]$.
We obtain
\begin{align}
-K & \leq CL^{p-2}(-\kappa\beta(\beta-1)C_{0}^{-2}L\left|z\right|^{\beta-2}+C_{0}C_{H}L\left|z\right|^{\alpha-1}+\sqrt{K}C_{H}^{1/2}\left|z\right|^{\frac{\alpha}{2}-1}+K)\nonumber \\
 & =CL^{p-2}(-\kappa\beta(\beta-1)C_{0}^{-2}L\left|z\right|^{\frac{\alpha}{2}-1}+C_{0}C_{H}L\left|z\right|^{\alpha-1}+\sqrt{K}C_{H}^{1/2}\left|z\right|^{\frac{\alpha}{2}-1}+K)\nonumber \\
 & =CL^{p-2}\left|z\right|^{\frac{\alpha}{2}-1}(-C(\alpha)C_{0}^{-2}L+C_{0}C_{H}L\left|z\right|^{\frac{\alpha}{2}}+\sqrt{K}C_{H}^{1/2}+K),\label{eq:lipschitz 1}
\end{align}
where we also used that $\kappa\beta(\beta-1)=C(\alpha)$. By (\ref{eq:lemma derivative est})
we have
\begin{align*}
\left|z\right|^{\frac{\alpha}{2}}\leq & \left(\frac{\omega(\left|z\right|)}{\varphi^{\prime}(\left|z\right|)L}\right)^{\frac{\alpha}{2}}\leq C(\alpha,\osc_{Q_{1}}u)L^{-\frac{\alpha}{2}},
\end{align*}
and thus continuing the estimate (\ref{eq:lipschitz 1}) we get
\begin{align*}
-K & \leq CL^{p-2}\left|z\right|^{\frac{\alpha}{2}-1}(-C(\alpha)C_{0}^{-2}L+C_{0}C_{H}C(\alpha,\osc_{Q_{1}}u)L^{1-\frac{\alpha}{2}}+\sqrt{K}C_{H}^{1/2}+K).
\end{align*}
Observe that the negative term inside the parentheses has the highest
power of $L$. Therefore, assuming $L$ to be large enough depending
on $\alpha,C_{0},C_{H},K$ and $\osc_{Q_{1}}u$, we can absorb the
positive terms to the negative term. This way, we arrive at
\[
-K\leq-C(\alpha,C_{0})\left|z\right|^{\frac{\alpha}{2}-1}L^{p-1}.
\]
Since $\left|z\right|^{\frac{\alpha}{2}-1}>C$, a contradiction follows
by taking even larger $L$ if necessary.

Therefore, $\Psi$ is non-positive, which by the definition of $\varphi$
means that
\[
u(x_{0},t_{0})-u(y_{0},t_{0})\leq L\varphi(\left|x_{0}-y_{0}\right|)\leq L\left|x_{0}-y_{0}\right|.
\]
Since $x_{0},y_{0}$ and $t_{0}$ were arbitrary, this proves the
Lipschitz continuity of $u$ in $Q_{1/2}$.
\end{proof}

\section{Local H{\"o}lder continuity in time}
We use Lipschitz continuity in space to show that positive solutions to Trudinger's equation are locally H{\"o}lder continuous in time with exponent $1/2$. While H{\"o}lder continuity of weak solutions was already established in \cite{kuusisu12, kuusilsu12}, here we have an explicit exponent. The proof is based on the comparison principle and suitable barriers (see e.g.\ \cite[Lemma 9.1]{barlesBitonLey2002} or \cite[Lemma 3.1]{imbertJinSilvestre19} for similar arguments).

\begin{thm}\label{thm:holder continuity in time}
    Let $u$ be a uniformly continuous viscosity solution to (\ref{eq:trudinger}) in $B_4\times(-4,0)$. Suppose that $0< m \leq u \leq M$. Then there is $C_H >0$ such that
    \begin{equation}\label{eq:time holder}
        |u(x_0, t_0) - u(x_0, s_0)| \leq C_H |t_0 - s_0|^{1/2} \text{\quad for all } (x_0, t_0), (x_0, s_0) \in Q_1.
    \end{equation}
    The constant $C_H$ depends only on $N$, $p$, $m$, $M$ and the optimal modulus of continuity of $u$ in space in $B_4\times (-4,0)$.
\end{thm}
\begin{proof}
The idea of the proof is to construct an explicit barrier function. Using the spatial Lipschitz regularity result, we show that this barrier lies above the solution on the parabolic boundary of the cylinder (Step 1).  By choosing the constants in the barrier appropriately, we also ensure that it is a supersolution to Trudinger's equation (Step 2).   The comparison principle then implies that the barrier remains above the solution  in the whole cylinder and thus repeating the same argument from below, we obtain the desired estimate in time (Step 3).

  {\bf Step 1:} Let $t_0 \in (-1,0)$ and $s_0 \in (t_0,0]$ be arbitrary. Then it suffices to establish (\ref{eq:time holder}) when $x_0 = 0$ since the case $x_0 \in B_1$ is analogous. By Theorem \ref{thm:lipschitz theorem}, the function $u$ is $L$-Lipschitz in $Q_1$ in space for some $L>0$. We define 
    \[
    \varphi(x,t):=u(0,t_{0})+A+\Theta(t-t_{0})+K\left|x\right|^{\beta},
    \]
    where $\beta:=p/(p-1)$, $\Theta\geq0$ is chosen later,
    \[
    A:=(s_{0}-t_{0})^{1/2}\quad\text{and}\quad K:=\max\left(M,\beta^{-1} L^{\beta}A^{1-\beta}\right).
    \]
    Since $K\geq M$ we have
    $u\leq\varphi$ on $[t_{0},0]\times\partial B_{1}$. Since $u$ is
    $L$-Lipschitz in space variables, for all $x\in B_{1}$ we have that
    \begin{align}
    \varphi(x,t_{0})-u(x,t_{0}) & =u(0,t_{0})-u(x,t_{0})+A+K\left|x\right|^{\beta}\nonumber \\
    & \geq-L\left|x\right|+A+K\left|x\right|^{\beta}=:f(\left|x\right|),\label{eq:holder 1}
    \end{align}
    where we denoted $f(r):=A-Lr+Kr^{\beta}$. Observe that
    \[
    f^{\prime}(r)=-L+K\beta r^{\beta-1}=0\iff r=\left(\frac{L}{K\beta}\right)^{\frac{1}{\beta-1}}=:r_{0}
    \]
    so that
    \begin{align*}
    \min_{r\in[0,\infty)}f(r)=f(r_{0}) & =A-L\left(\frac{L}{K\beta}\right)^{\frac{1}{\beta-1}}+K\left(\frac{L}{K\beta}\right)^{\frac{\beta}{\beta-1}}\\
    & =A+K^{-\frac{1}{\beta-1}}L^{\frac{\beta}{\beta-1}}\left(-\left(\frac{1}{\beta}\right)^{\frac{1}{\beta-1}}+\left(\frac{1}{\beta}\right)^{\frac{\beta}{\beta-1}}\right)\\
    & \geq A-\left(\frac{1}{K}\frac{L^{\beta}}{\beta}\right)^{\frac{1}{\beta-1}}\geq0,
    \end{align*}
    where in the last inequality we used the definition of $K$. Thus by
    (\ref{eq:holder 1}) we have $u\leq\varphi$ on $B_{1}\times\left\{ t_{0}\right\} $.
    So far we have shown that for any $\Theta\geq0$
    \begin{equation}
    u\leq\varphi\quad\text{on }\partial_{\mathcal{P}}(B_{1}\times[t_{0},0]).\label{eq:holder 7}
    \end{equation}
    
{\bf Step 2:} Next, we select $\Theta$ so large that
    \begin{equation}
    \partial_{t}\varphi^{p-1}(x,t)\geq\Delta_{p}\varphi(x,t)\quad\text{for all }(x,t)\in((\mathbb{R}^{N}\setminus\left\{ 0\right\} )\times(t_{0},\infty))\cap\left\{ \varphi<M\right\} .\label{holder 6}
    \end{equation}
    To this end, fix such a point $(x,t)$. Observe that since $x\not=0$,
    we have
    \begin{align}
    \left|\Delta_{p}\varphi(x,t)\right| & \leq\left|D\varphi(x,t)\right|^{p-2}\left\Vert D^{2}\varphi(x,t)\right\Vert \nonumber \\
    & =\left|K\beta\left|x\right|^{\beta-1}\right|^{p-2}\left\Vert K\beta\left|x\right|^{\beta-2}I+K\beta(\beta-2)x\otimes x\left|x\right|^{\beta-4}\right\Vert \nonumber \\
    & \le C(N,p)K^{p-1}\left|x\right|^{(\beta-1)(p-1)-1}=C(N,p)K^{p-1}.\label{eq:holder 3}
    \end{align}
    where we used that $(\beta-1)(p-1)=1$. On the other hand, since by
    definition $\varphi>m$ and because $(x,t)\in\left\{ \varphi<M\right\} $,
    we have that
    \begin{align}
    (p-1)\varphi^{p-2}(x,t)\partial_{t}\varphi(x,t) & =(p-1)\Theta\varphi^{p-2}(x,t)\nonumber \\
    & >(p-1)\min(M^{p-2},m^{p-2})\Theta\nonumber \\
    & =C(p,m,M)\Theta.\label{eq:holder 4}
    \end{align}
    Taking $\Theta:=C(N,p,m,M)K^{p-1}$, the inequality (\ref{holder 6})
    holds by (\ref{eq:holder 3}) and (\ref{eq:holder 4}).

    We define 
    \[
    \overline{\varphi}(x,t):=\min(\varphi(x,t),M).
    \]
    We show that $\overline{\varphi}$ is a weak supersolution to Trudinger's
    equation in $B_{1}\times(t_{0},0)$. To this end, let $\phi\in C_{0}^{\infty}(B_{1}\times(t_{0},0))$
    be non-negative. Observe that $\varphi(x,t)<M$ if and only if $x\in B_{\rho(t)}$,
    where 
    \[
    \rho(t)=\left(\frac{M-u(t_{0},0)-A-\Theta(t-t_{0})}{K}\right)^{1/\beta}.
    \]
    Since $\varphi$ is $C^{1}$, the weak derivatives $\partial_{t}\overline{\varphi}$
    and $D\overline{\varphi}$ exist, and
    \[
    (\partial_{t}\overline{\varphi},D\overline{\varphi})=\begin{cases}
    (\partial_{t}\varphi,D\varphi), & \text{a.e.\ in }\left\{ \varphi<M\right\} ,\\
    (0,0), & \text{a.e.\ in }\left\{ \varphi\geq M\right\} .
    \end{cases}
    \]
    Hence
    \begin{align*}
    & \int_{B_{1}\times(t_{0},0)}-\overline{\varphi}^{p-1}\partial_{t}\phi+\left|D\overline{\varphi}\right|^{p-2}D\overline{\varphi}\cdot D\phi \d z \\
    & =\int_{t_{0}}^{0}\int_{B_{\rho(t)}}\phi\partial_{t}\varphi^{p-1}+\left|D\varphi\right|^{p-2}D\varphi\cdot D\phi\d x\d t\\
    & =\lim_{r\rightarrow0}\Big(\int_{t_{0}}^{0}\int_{B_{\rho(t)}\setminus B_{r}}\phi\partial_{t}\overline{\varphi}^{p-1}+\left|D\varphi\right|^{p-2}D\varphi\cdot D\phi\d x\d t\Big),
    \end{align*}
    where by the divergence theorem
    \begin{align*}
    & \int_{t_{0}}^{0}\int_{B_{\rho(t)}\setminus B_{r}}\left|D\varphi\right|^{p-2}D\varphi\cdot D\phi\d x\d t\\
    & =\int_{t_{0}}^{0}\int_{B_{\rho(t)}\setminus B_{r}}-\phi\div(\left|D\varphi\right|^{p-2}D\varphi)dx\d t\\
    & \ \ \ -\int_{t_{0}}^{0}\int_{\partial B_{r}}\phi\left|D\varphi\right|^{p-2}D\varphi\cdot\frac{x}{\left|x\right|}\d S(x)\d t+\int_{t_{0}}^{0}\int_{\partial B_{\rho(t)}}\phi\left|D\varphi\right|^{p-2}D\varphi\cdot\frac{x}{\left|x\right|}\d S(x)\d t\\
    & \geq\int_{t_{0}}^{0}\int_{B_{\rho(t)}\setminus B_{r}}-\phi\partial_{t}\varphi^{p-1}\d x\d t-\int_{t_{0}}^{0}\int_{\partial B_{r}}\phi\left|D\varphi\right|^{p-2}D\varphi\cdot\frac{x}{\left|x\right|}\d S(x)\d t.
    \end{align*}
    Here we used (\ref{eq:holder 7}) and that $D\varphi(x,t)\cdot x\geq0$
    for all $(x,t)\in\mathbb{R}^{N+1}$. Combining the last two displays,
    we see that $\overline{\varphi}$ is a weak supersolution in $B_{1}\times(t_{0},0)$. 

        {\bf Step 3:} By (\ref{eq:holder 7}) and since $u \leq M$, we have $u\leq\overline{\varphi}$
    on $\partial_{\mathcal{P}}(B_{1}\times(t_{0},0))$. Therefore, since $u$ is a weak solution by Theorem \ref{thm:equivalence}, it
    follows from comparison principle \cite[Corollary 1]{lindgrenl22} that $u\leq\overline{\varphi}$
    in $B_{1}\times[t_{0},0]$. In particular, we have that
    \begin{align*}
    u(0, s_{0})-u(0, t_{0}) & \leq\overline{\varphi}(0,s_0)-u(0, t_{0})\\
    & \leq A+C(N,p,m,M)(2M+\beta^{-1}A^{1-\beta}L^{\beta})^{p-1}(s_{0}-t_{0})\\
    & \leq A+C(N,p,m,M,L)(1+A^{(1-\beta)(p-1)})(s_{0}-t_{0}).
    \end{align*}
    Since $(1-\beta)(p-1)=-1$ and $A=(s_{0}-t_{0})^{1/2},$ we obtain
    \[
    u(0,s_0)-u(0, t_0)\leq C(N,p,m,M,L)(s_{0}-t_{0})^{1/2}.
    \]
    The lower bound can be derived similarly, but instead using the subsolution
    \[
    \underline{\varphi}(x,t):=\max(u(0, t_0)-A-\Theta(t-t_{0})-K\left|x\right|^{\beta},m). \qedhere
    \]

\end{proof}

\appendix
\section*{Appendix. Basic properties}

For the benefit of the reader, we prove the basic properties of inf-convolution in  Lemma \ref{lem:inf properties} even if they are rather standard. For the convenience of the reader we repeat the lemma here.
\newtheorem*{lem*}{Lemma}
\begin{lem*}
    \label{lem:inf properties-A}Assume that $u:\Xi\rightarrow\mathbb{R}$ is lower semicontinuous
and bounded. Suppose also that $\delta_{\varepsilon}\rightarrow0$
as $\varepsilon\rightarrow0$. Then $u_{\varepsilon}$ has the following
properties.
\begin{enumerate}[label=(\roman*)]
\item We have $u_{\varepsilon}\leq u$ in $\Xi$ and $u_{\varepsilon}\rightarrow u$
pointwise as $\varepsilon\rightarrow0$.
\item Denote $r(\varepsilon):=(q\varepsilon^{q-1}\osc_{\Xi}u)^{\frac{1}{q}}$,
$t(\varepsilon):=(2\delta_{\varepsilon}\osc_{\Xi}u)^{\frac{1}{2}}$ and set
\[
\Xi_{\varepsilon}:=\left\{ (x,t)\in\Xi:B_{r(\varepsilon)}(x)\times(t-t(\varepsilon),t+t(\varepsilon))\Subset\Xi\right\} .
\]
Then, for any $(x,t)\in\Xi_{\varepsilon}$ there exists $(x_{\varepsilon},t_{\varepsilon})\in\overline{B}_{r(\varepsilon)}(x)\times[t-t(\varepsilon),t+t(\varepsilon)]$
such that
\[
u_{\varepsilon}(x,t)=u(x_{\varepsilon},t_{\varepsilon})+\frac{\left|x-x_{\varepsilon}\right|^{q}}{q\varepsilon^{q-1}}+\frac{\left|t-t_{\varepsilon}\right|^{2}}{2\delta_{\varepsilon}}.
\]
\item The function $u_{\varepsilon}$ is semi-concave in $\Xi_{\varepsilon}$.
In particular, the function $u_{\varepsilon}(x,t)-(C\left|x\right|^{2}+t^{2}/\delta_{\varepsilon})$
is concave in $\Xi_{\varepsilon}$, where $C=(q-1)r(\varepsilon)^{q-2}/\varepsilon^{q-1}$.
\item Suppose that $u_{\varepsilon}$ is twice differentiable in space and
time at $(x,t)\in\Xi_{\varepsilon}$. Then 
\begin{align*}
\partial_{t}u_{\varepsilon}(x,t) & =\frac{t-t_{\varepsilon}}{\delta_{\varepsilon}},\\
Du_{\varepsilon}(x,t) & =(x-x_{\varepsilon})\frac{\left|x-x_{\varepsilon}\right|^{q-2}}{\varepsilon^{q-1}},\\
D^{2}u_{\varepsilon}(x,t) & \leq(q-1)\frac{\left|x-x_{\varepsilon}\right|^{q-2}}{\varepsilon^{q-1}}I.
\end{align*}
\end{enumerate}
\end{lem*}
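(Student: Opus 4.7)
The plan is to handle the four items in order; all follow from the defining infimum plus the standard upper-touching argument, with the main effort concentrated in (iii).

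For (i), setting $(y,s)=(x,t)$ in the defining infimum gives $u_\varepsilon \le u$, so $\limsup_{\varepsilon\to 0} u_\varepsilon(x,t)\le u(x,t)$. For the reverse inequality, let $(x_\varepsilon,t_\varepsilon)$ be the minimizer produced in (ii); then $|x_\varepsilon-x|\le r(\varepsilon)$ and $|t_\varepsilon-t|\le t(\varepsilon)$, both tending to $0$ as $\varepsilon\to 0$ (using $\delta_\varepsilon\to 0$). Since $u_\varepsilon(x,t)\ge u(x_\varepsilon,t_\varepsilon)$, lower semicontinuity of $u$ yields $\liminf_{\varepsilon\to 0}u_\varepsilon(x,t)\ge u(x,t)$. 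For (ii), if $(y,s)\in\Xi$ satisfies $|y-x|>r(\varepsilon)$, then $|y-x|^q/(q\varepsilon^{q-1})>\osc_\Xi u$, so the objective is strictly larger than $\sup_\Xi u\ge u(x,t)\ge u_\varepsilon(x,t)$ (using the trivial choice $(y,s)=(x,t)$), and analogously when $|s-t|>t(\varepsilon)$. Hence the full infimum equals the infimum over the compact set $\overline{B}_{r(\varepsilon)}(x)\times[t-t(\varepsilon),t+t(\varepsilon)]$, which lies in $\Xi$ for $(x,t)\in\Xi_\varepsilon$; lower semicontinuity of the objective then yields a minimizer.

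For (iii), fix $(x,t)\in\Xi_\varepsilon$ with minimizer $(x_\varepsilon,t_\varepsilon)$, and consider
\[
\Psi(x',t'):=u(x_\varepsilon,t_\varepsilon)+\frac{|x_\varepsilon-x'|^q}{q\varepsilon^{q-1}}+\frac{(t_\varepsilon-t')^2}{2\delta_\varepsilon}.
\]
Since $q\ge 2$, $\Psi$ is $C^2$; it touches $u_\varepsilon$ from above at $(x,t)$ because it dominates the infimum globally and agrees with $u_\varepsilon$ at the contact point. At this contact point, the spatial block of the Hessian has operator norm at most $(q-1)|x_\varepsilon-x|^{q-2}/\varepsilon^{q-1}\le (q-1)r(\varepsilon)^{q-2}/\varepsilon^{q-1}=C$, while the time second derivative equals $1/\delta_\varepsilon$. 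Thus at every $(x,t)\in\Xi_\varepsilon$ the function $u_\varepsilon$ admits a $C^2$ upper touch with these Hessian bounds, which is the viscosity characterization of the distributional bounds $D^2u_\varepsilon\le CI$ and $\partial_{tt}u_\varepsilon\le 1/\delta_\varepsilon$. This gives concavity of $u_\varepsilon-\frac{C}{2}|x|^2-t^2/(2\delta_\varepsilon)$ on $\Xi_\varepsilon$, and subtracting the additional convex quadratic $\frac{C}{2}|x|^2+t^2/(2\delta_\varepsilon)$ preserves concavity, producing the stated form.

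For (iv), at a twice differentiable point $(x,t)\in\Xi_\varepsilon$, the nonnegative function $\Psi-u_\varepsilon$ attains the minimum value $0$ at $(x,t)$, so its first derivatives vanish and its Hessian is positive semidefinite there. Direct differentiation of $\Psi$ then yields $\partial_t u_\varepsilon(x,t)=(t-t_\varepsilon)/\delta_\varepsilon$, $Du_\varepsilon(x,t)=(x-x_\varepsilon)|x-x_\varepsilon|^{q-2}/\varepsilon^{q-1}$, while the operator-norm bound on $D^2\Psi$ translates to the stated inequality $D^2u_\varepsilon(x,t)\le (q-1)|x-x_\varepsilon|^{q-2}/\varepsilon^{q-1}\,I$. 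The only delicate step is the passage in (iii) from a pointwise $C^2$ upper touch to global concavity of the corrected function; this is the standard equivalence, for continuous functions, between the viscosity Hessian bound and semi-concavity, and it is the sole step of the proof that is not a direct computation.
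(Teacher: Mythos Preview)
Your argument is correct, and parts (i), (ii), (iv) match the paper's treatment essentially verbatim. The difference lies in (iii). The paper argues directly that $u_\varepsilon(x,t)-(C_h|x|^2+t^2/\delta_\varepsilon)$ is, on a small neighbourhood $B_h(z)\times(\tau-h,\tau+h)$, an infimum over a \emph{fixed} index set of functions each of which is concave there; concavity of the inf follows immediately, and letting $h\to 0$ recovers the sharp constant $C$. Your route instead fixes the single upper-touching function $\Psi$ at each point, records the Hessian bound $D^2\Psi(x,t)\le CI$ at the contact point, and invokes the equivalence between ``a $C^2$ upper support with Hessian $\le CI$ exists at every point'' and semi-concavity with constant $C$. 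This equivalence is indeed standard (and in fact the sandwiching $\phi\le u_\varepsilon\le\Psi$ for any lower test function $\phi$ immediately forces $D^2\phi\le D^2\Psi\le CI$, giving the viscosity inequality, which is known to coincide with semi-concavity for continuous functions). The trade-off: the paper's argument is entirely self-contained and elementary (inf of concave is concave), at the cost of the $C_h\to C$ localisation step; yours is shorter and avoids that step, but imports the viscosity-versus-classical equivalence for the concavity inequality as a black box, exactly as you flag in your final sentence.
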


\begin{proof}[Proof of Lemma \ref{lem:inf properties}] We consider the different properties given in the lemma separately.

\textit{Proof of (ii)}. Let $(x,t)\in\Xi_{\varepsilon}$. Let $(y,s)\in\Xi$
be such that $y\in\mathbb{R}^{N}\setminus\overline{B}_{r(\varepsilon)}(x)$
or $s\in\mathbb{R}^{N}\setminus[t-t(\varepsilon),t+t(\varepsilon)]$.
Then by the definition of $r(\varepsilon)$ and $t(\varepsilon)$, we
have that
\begin{align*}
\frac{\left|y-x\right|^{q}}{q\varepsilon^{q-1}}+\frac{\left|s-t\right|^{2}}{2\delta_{\varepsilon}} & >\osc_{\Xi}u
\end{align*}
so that 
\begin{align*}
 u(y,s)+\frac{\left|y-x\right|^{q}}{q\varepsilon^{q-1}}+\frac{\left|s-t\right|^{2}}{2\delta_{\varepsilon}}>u(y,s)+u(x,t)-u(x,t)+\osc_{\Xi}u\geq u(x,t)\geq u_{\varepsilon}(x,t).   
\end{align*}
Hence by lower semi-continuity
\begin{align*}
u_{\varepsilon}(x,t) & =\inf_{(y,s)\in\overline{B}_{r(\varepsilon)}(x)\times[t-t(\varepsilon),t+t(\varepsilon)]}\left\{ u(y,s)+\frac{\left|y-x\right|^{q}}{q\varepsilon^{q-1}}+\frac{\left|s-t\right|^{2}}{2\delta_{\varepsilon}}\right\} \\
 & =u(x_{\varepsilon},t_{\varepsilon})+\frac{\left|x-x_{\varepsilon}\right|^{q}}{q\varepsilon^{q-1}}+\frac{\left|t-t_{\varepsilon}\right|^{2}}{2\delta_{\varepsilon}}
\end{align*}
for some $(x_{\varepsilon},t_{\varepsilon})\in\overline{B}_{r(\varepsilon)}\times[t-t(\varepsilon),t+t(\varepsilon)]$.

\textit{Proof of (i)}. Let $(x,t)\in\Xi$ and let $\varepsilon>0$ be so small
that $(x,t)\in\Xi_{\varepsilon}$. From \textit{(ii)} and boundedness of $u$,
it is clear that $(x_{\varepsilon},t_{\varepsilon})\rightarrow(x,t)$.
Therefore, by lower semi-continuity
\[
\limsup_{\varepsilon\rightarrow0}\frac{\left|x-x_{\varepsilon}\right|^{q}}{q\varepsilon^{q-1}}+\frac{\left|t-t_{\varepsilon}\right|^{2}}{2\delta_{\varepsilon}}=\limsup_{\varepsilon\rightarrow0}(u_{\varepsilon}(x,t)-u(x_{\varepsilon},t_{\varepsilon}))\leq\limsup_{\varepsilon\rightarrow0}(u(x,t)-u(x_{\varepsilon},t_{\varepsilon}))\leq0
\]
and so $u_{\varepsilon}(x,t)\rightarrow u(x,t)$.

\textit{Proof of (iii)}. For all $(y,s)\in\Xi$, define the
function $\varphi_{(y,s)}:\mathbb{R}^{N+1}\rightarrow\mathbb{R}$
by
\[
\varphi_{(y,s)}(x,t):=u(y,s)+\frac{\left|y-x\right|^{q}}{q\varepsilon^{q-1}}+\frac{\left|s-t\right|^{2}}{2\delta_{\varepsilon}}.
\]
Observe that for any $h>0$, whenever $\left|y-x\right|\le r(\varepsilon)+h$,
we have that
\begin{align*}
D^{2}\varphi_{(y,s)}(x,t) & \leq\frac{(q-1)\left|y-x\right|^{q-2}}{\varepsilon^{q-1}}\leq\frac{(q-1)(r(\varepsilon)+h)^{q-2}}{\varepsilon^{q-1}},\\
\partial_{tt}\varphi_{(y,s)}(x,t) & \le\frac{1}{\delta_{\varepsilon}}.
\end{align*}
It follows that for $C_{h}:=(q-1)(r(\varepsilon)+h)^{q-2}/\varepsilon^{q-1}$
the function $(x,t)\mapsto\varphi_{(y,s)}(x,t)-(C_h\left|x\right|^{2}+t^{2}/\delta_{\varepsilon})$
is concave in the set $\left\{ (x,t)\in\mathbb{R}^{N+1}:\left|x-y\right|\leq r(\varepsilon)+h\right\} $.
Now, if $(z, \tau)\in\Xi_{\varepsilon}$, then for small $h>0$
we have $B_{h}(z)\times(\tau-h,\tau+h)\subset\Xi_{\varepsilon}$.
Therefore, by property \textit{(ii)} and the definition of inf-convolution, we have
for all $(x,t)\in B_{h}(z)\times(\tau-h,\tau+h)$ that
\begin{align*}
u_{\varepsilon}(x,t)-(C_h\left|x\right|^{2}+t^{2}/\delta_{\varepsilon}) & =\inf_{(y,s)\in\overline{B}_{r(\varepsilon)}(x)\times[t-t(\varepsilon),t+t(\varepsilon)]}\left\{ \varphi_{(y,s)}(x,t)\right\} -C_{h}(\left|x\right|^{2}+\left|t\right|^{2})\\
 & =\inf_{(y,s)\in\Xi\cap\overline{B}_{r(\varepsilon)+h}(z)\times[\tau-t(\varepsilon)-h,\tau+t(\varepsilon)+h]}\left\{ \varphi_{(y,s)}(x,t)\right\} -C_{h}(\left|x\right|^{2}+\left|t\right|^{2})\\
 & =\inf_{(y,s)\in\Xi\cap\overline{B}_{r(\varepsilon)+h}(z)\times[\tau-t(\varepsilon)-h,\tau+t(\varepsilon)+h]}\left\{ \varphi_{(y,s)}(x,t)-C_{h}(\left|x\right|^{2}+\left|t\right|^{2})\right\} .
\end{align*}
It follows that as the
infimum of concave functions $u_{\varepsilon}(x,t)-(C_{h}\left|x\right|^{2}+t^{2}/\delta_{\varepsilon})$
is concave in $B_{h}(z)\times(\tau-h,\tau+h)$, and thus in $\Xi_{\varepsilon}$, since
$(z,\tau)\in\Xi_{\varepsilon}$ was arbitrary. Taking $h\rightarrow0$,
we see that also $u_{\varepsilon}(x,t)-(C_{0}\left|x\right|^{2}+t^{2}/\delta_{\varepsilon})$
is concave.

\textit{Proof of (iv)}. If $u_{\varepsilon}$ is twice differentiable at $(x,t)\in\Xi_{\varepsilon}$,
then there is a $C^{2}$-function $\varphi$ that touches $u_\varepsilon$ from
below at $(x,t)$ and $\partial_{t}\varphi(x,t)=\partial_{t}u_\varepsilon(x,t)$,
$D\varphi(x,t)=Du_\varepsilon(x,t)$ as well as $D^{2}\varphi(x,t)=D^{2}u_\varepsilon(x,t)$. By \textit{(ii)} and the definition of inf-convolution, we have
\begin{align*}
\varphi(x,t) & = u_{\varepsilon}(x,t)=u(x_{\varepsilon},t_{\varepsilon})+\frac{\left|x-x_{\varepsilon}\right|^{q}}{q\varepsilon^{q-1}}+\frac{\left|t-t_{\varepsilon}\right|^{2}}{2\delta_{\varepsilon}},\\
\varphi(z,\tau) & \leq u_{\varepsilon}(z,\tau)\leq u(y,s)+\frac{\left|y-z\right|^{q}}{q\varepsilon^{q-1}}+\frac{\left|\tau-s\right|^{2}}{2\delta_{\varepsilon}}\quad\text{for all }(z,\tau),(y,s)\in\Xi.
\end{align*}
But this means that the function $\psi(z,\tau):=\varphi(z,\tau)-\frac{\left|z-x_{\varepsilon}\right|^{q}}{q\varepsilon^{q-1}}-\frac{\left|\tau-t_{\varepsilon}\right|^{2}}{2\delta_{\varepsilon}}$
has a maximum at $(z,\tau)=(x,t)$. Indeed,
the first display above implies that $\psi(x,t)=u(x_{\varepsilon},t_{\varepsilon})$,
and selecting $(y,s)=(x_{\varepsilon},t_{\varepsilon})$ in the second one
we get that $\psi(z,\tau)\leq u(x_{\varepsilon},t_{\varepsilon})$ for all $(z, \tau)\in\Xi$.
The claim then follows by direct computation.
\end{proof}

Next we restate and prove Lemma~\ref{lem: elem ineq} for the convenience of the reader.
\begin{lem*}[Elementary inequality]
 Let $p>1$. Suppose that $a\geq m>0$. Then
for all $s\in(0,1)$ we have
\[
\left|(a+s)^{\frac{2-p}{2}}-a^{\frac{2-p}{2}}\right|\leq C(p,m)s.
\]
\end{lem*}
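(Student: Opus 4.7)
The plan is to invoke the fundamental theorem of calculus on the smooth function $h\mapsto h^{(2-p)/2}$, which is differentiable on $(0,\infty)$ with derivative $\frac{2-p}{2}h^{-p/2}$. Writing
\[
(a+s)^{\frac{2-p}{2}}-a^{\frac{2-p}{2}}=\int_{a}^{a+s}\frac{2-p}{2}h^{-p/2}\,dh,
\]
the task reduces to bounding the integrand uniformly on the interval of integration.

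Since $p>1$, the exponent $-p/2$ is negative, so $h\mapsto h^{-p/2}$ is decreasing on $(0,\infty)$. On the interval $[a,a+s]$ we have $h\ge a\ge m>0$, hence
\[
|h^{-p/2}|\le m^{-p/2},
\]
which yields the pointwise bound $\bigl|\tfrac{2-p}{2}h^{-p/2}\bigr|\le \tfrac{|p-2|}{2}\,m^{-p/2}=:C(p,m)$ on $[a,a+s]$. Integrating this over an interval of length $s$ produces the desired inequality
\[
\left|(a+s)^{\frac{2-p}{2}}-a^{\frac{2-p}{2}}\right|\le C(p,m)\,s.
\]

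There is no real obstacle here: the only point requiring any care is to make sure the lower bound $a\ge m$ is used so that the integrand stays bounded independently of $a$, which is precisely what the hypothesis $m>0$ provides. The restriction $s\in(0,1)$ is not actually needed for this estimate and enters only through the context in which the lemma is later applied.
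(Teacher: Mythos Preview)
Your proof is correct and follows essentially the same approach as the paper: both write the difference as $\int_a^{a+s}\frac{2-p}{2}h^{-p/2}\,dh$ and bound the integrand on $[a,a+s]$ using $a\ge m$. Your observation that the hypothesis $s\in(0,1)$ is not actually needed here is also correct.
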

%%%%%%%%%%%%%%
\begin{proof}[Proof of Lemma~\ref{lem: elem ineq}.]
Clearly
\begin{align*}
\left|(a+s)^{\frac{2-p}{2}}-a^{\frac{2-p}{2}}\right|=\left|\int_{a}^{a+s}\frac{d}{dh}h^{\frac{2-p}{2}}\d h\right| & =\left|\int_{a}^{a+s}\frac{p-2}{2}h^{-\frac{p-2}{2}-1}\d h\right|\\
 & \leq\frac{\left|p-2\right|}{2}\max(a^{-\frac{p-2}{2}-1},(a+s)^{-\frac{p-2}{2}-1})s\\
 & =C(p,m)s,\qedhere
\end{align*}
where one can check that $C(p,m)=\frac{|p-2|}{2}m^{-p/2}$.
\end{proof}
%%%%%%%%%%%%%%%%

Data Availability Statement: This paper has no associated data.

Conflict of
interest: On behalf of all authors, the corresponding author states that there is no conflict of
interest.

\bibliographystyle{alpha}
\bibliography{ref}

\end{document}